\renewcommand{\d}{\partial}
\newcommand{\dbar}{\overline{\partial}}
\newcommand{\ddbar}{\sqrt{-1}\d\overline{\d}}
\newtheorem{thm}{Theorem}
\newtheorem{prop}[thm]{Proposition}
\newtheorem{lem}[thm]{Lemma}
\theoremstyle{definition}
\newtheorem{defn}[thm]{Definition}
\newtheorem{remark}[thm]{Remark}
\renewcommand{\[}{\begin{equation}}
\renewcommand{\]}{\end{equation}}
\title{K\"ahler-Einstein metrics along the smooth continuity method}
\author[V. Datar]{Ved Datar}
\author[G. Sz\'ekelyhidi]{G\'abor Sz\'ekelyhidi}
\address{Department of Mathematics, University of Notre Dame, 255
  Hurley, Notre Dame, IN 46556}
\begin{document}

\begin{abstract} We show that if a Fano manifold $M$ is K-stable with
  respect to special degenerations equivariant under a compact group
  of automorphisms, then $M$ admits a K\"ahler-Einstein metric. This
  is a strengthening of the solution of the Yau-Tian-Donaldson
  conjecture for Fano manifolds by Chen-Donaldson-Sun~\cite{CDS12}, and can be used to obtain new examples of K\"ahler-Einstein manifolds. 
  We also give analogous results for twisted K\"ahler-Einstein
  metrics and Kahler-Ricci solitons. 
\end{abstract}

\maketitle

\section{Introduction}
Let $M$ be a Fano manifold of dimension $n$. A basic problem in
K\"ahler geometry is whether $M$ admits a K\"ahler-Einstein
metric. The Yau-Tian-Donaldson conjecture~\cite{Yau93, Tian97, Don02},
confirmed recently by Chen-Donaldson-Sun~\cite{CDS12, CDS13_1,
  CDS13_2, CDS13_3}, says that $M$ admits a K\"ahler-Einstein metric if
and only if it is K-stable. In general it
seems to be intractable at present to check K-stability since in principle
one must study an infinite number of possible degenerations of $M$ to $\mathbf{Q}$-Fano
varieites. One goal of this paper is to study some situations with large symmetry
groups, where the problem reduces to checking a finite number of
possibilities. This can then be used to yield new examples of K\"ahler-Einstein
manifolds. 

Suppose then that a compact group $G$
acts on $M$ by holomorphic automorphisms. Our main theorem is the
following equivariant version of the result of Chen-Donaldson-Sun. 

\begin{thm}\label{thm:main}
  Suppose that $(M, K_M^{-1})$ is K-stable, with respect to
  special degenerations that are $G$-equivariant. Then $M$ admits a
  K\"ahler-Einstein metric. 
\end{thm}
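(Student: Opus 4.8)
The plan is to run the smooth continuity method of Aubin for the family of equations
\[
\mathrm{Ric}(\omega_t) = t\,\omega_t + (1-t)\,\alpha,
\]
where $\alpha$ is a fixed $G$-invariant Kähler form in $c_1(M)$ (for instance a multiple of $\mathrm{Ric}$ of a background $G$-invariant metric), and to show that the set $I$ of $t\in[0,1]$ for which a $G$-invariant solution $\omega_t$ exists is nonempty, open, and closed. Nonemptiness at $t=0$ is the Aubin–Yau theorem (existence of a metric with prescribed Ricci form), and openness follows from the implicit function theorem once one checks that the linearization — a Laplacian-type operator — has no kernel, which for $t<1$ is automatic since $t\,\omega_t+(1-t)\alpha$ has Ricci curvature bounded below so $-\Delta - t$ is invertible on $G$-invariant functions; the same works at any $t_0\in I$ by the same eigenvalue estimate. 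The content is therefore closedness: given $t_i\to T$ with solutions $\omega_{t_i}$, one must produce a $G$-invariant solution at $T$.

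For closedness I would invoke the partial $C^0$-estimate, which by Chen–Donaldson–Sun (and its extensions to the continuity method, e.g. by the bounds on Gromov–Hausdorff limits of metrics with Ricci lower bounds) holds uniformly along this family since $\mathrm{Ric}(\omega_{t_i}) \ge t_i\,\omega_{t_i} > 0$ is uniformly bounded below and the volume is fixed. Thus after passing to a subsequence $(M,\omega_{t_i})$ converges in the Gromov–Hausdorff sense to a $\mathbf{Q}$-Fano variety $W$ with a weak Kähler–Einstein-type (twisted, if $T<1$) metric, and $W$ is equivariantly embedded in a fixed projective space by pluri-anticanonical sections with an action of (a subgroup of) $G$, because the approximating metrics and hence the limit are $G$-invariant. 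If $T=1$ this exhibits $M$ as degenerating to a $\mathbf{Q}$-Fano $W$ admitting a Kähler–Einstein metric; one then shows, exactly as in Chen–Donaldson–Sun, that this degeneration can be realized as a special degeneration, that it is $G$-equivariant, and that K-semistability with respect to it — which holds by hypothesis because we only assumed K-stability against $G$-equivariant special degenerations — forces $W\cong M$, so $M$ itself carries a Kähler–Einstein metric and $1\in I$. If $T<1$ the twisted analogue of the Matsushima/properness argument (the twisting term $(1-T)\alpha>0$ makes the relevant functional coercive) rules out any nontrivial degeneration outright, so $W\cong M$ and $T\in I$ immediately; hence $I=[0,1]$.

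The main obstacle is the step $T=1$: deducing $W\cong M$ from the $G$-equivariant K-stability hypothesis. Concretely one must (i) establish that the Gromov–Hausdorff limit $W$ is a normal $\mathbf{Q}$-Fano variety sitting inside a fixed $\mathbb{P}^N$ with a reductive-in-the-relevant-sense group action containing the complexification of $G$'s action, (ii) produce a one-parameter subgroup — an actual algebraic degeneration of $M$ to $W$ inside the relevant Hilbert scheme — that is $G$-equivariant (this uses Luna's slice theorem / Matsushima-type reductivity of $\mathrm{Aut}(W,$ KE metric$)$ together with the fact that $G$ acts on the whole picture), and (iii) compute that the Donaldson–Futaki invariant of this special degeneration vanishes because $W$ is Kähler–Einstein, contradicting strict K-stability unless $W\cong M$. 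Each of these is present in Chen–Donaldson–Sun's proof; the new point is purely the bookkeeping that every object produced — the limit $W$, the embedding, the test configuration — can be taken $G$-invariant/$G$-equivariant, so that the weaker equivariant stability hypothesis still applies. I expect the delicate technical issue to be verifying that the degeneration produced is genuinely a \emph{special} degeneration (normal central fiber, $\mathbf{Q}$-Fano) and not merely a weak limit, which in Chen–Donaldson–Sun requires the detailed analysis of tangent cones and their algebraicity — but this is imported wholesale, with the $G$-action carried along.
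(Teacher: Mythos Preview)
Your outline for the case $T=1$ matches the paper's argument closely and is correct in spirit: the limit $W$ carries a weak K\"ahler--Einstein metric, reductivity of its automorphism group (Matsushima-type) together with a Luna slice argument produces a $G$-equivariant special degeneration of $M$ to $W$ with vanishing Futaki invariant, and K-stability forces $W\cong M$.

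The genuine gap is your treatment of $T<1$. Your claim that ``the twisting term $(1-T)\alpha>0$ makes the relevant functional coercive'' and hence ``rules out any nontrivial degeneration outright'' is simply false. If it were true, the continuity method would always reach $t=1$ regardless of any stability hypothesis, and every Fano manifold would be K\"ahler--Einstein. Concretely, the invariant $R(M)=\sup\{t:\text{the equation is solvable at }t\}$ is known to be strictly less than $1$ for many Fano manifolds (e.g.\ $R(\mathrm{Bl}_p\mathbf{P}^2)=6/7$), so the continuity path \emph{can} and \emph{does} break down at some $T<1$. The positivity of $\alpha$ on $M$ does not translate into coercivity of the twisted Ding or Mabuchi functional; that coercivity is precisely equivalent to what you are trying to prove.

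What actually happens at $T<1$ is subtler and is where the paper's new work lies. The Gromov--Hausdorff limit $W$ carries a \emph{twisted} K\"ahler--Einstein metric, twisted by the weak limit $\beta=\lim (F_{t_k})_*\alpha$. This $\beta$ is a closed positive current, not a K\"ahler form, so the automorphism algebra $\mathfrak{g}_{W,\beta}=\{w:\iota_w\beta=0\}$ can be nontrivial. One would like to run the Luna slice argument on the pair $(W,\beta)$ to produce a $G$-equivariant test configuration for $(M,\alpha)$, but pairs (variety, current) live in an infinite-dimensional space where Luna does not apply. The paper's key idea is to write $\alpha$ as an average of hyperplane sections $\alpha=\int_{\mathbf{P}^{N*}}[M\cap H]\,d\mu(H)$, so that $\beta=\int[W\cap\rho_\infty(H)]\,d\mu(H)$, and to show (Lemma~\ref{lem:stabequal}) that the stabilizer of $(W,\beta)$ equals the stabilizer of $(W,W\cap\rho_\infty(H_1),\ldots,W\cap\rho_\infty(H_d))$ for finitely many $H_i$. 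This tuple \emph{does} live in a finite-dimensional Hilbert scheme, Luna applies, and one obtains a $G$-equivariant $\mathbf{C}^*$-action $\lambda$ with $W=\lim\lambda(t)g\cdot M$. A delicate comparison of the twisted Futaki invariants at levels $T$ and $s>T$ (Proposition~\ref{prop:thetaform} and the inequalities \eqref{eq:b1}--\eqref{eq:b3}), using the K-semistability hypothesis at level $s$, then forces $\lambda$ to be trivial and hence $W\cong M$. None of this is automatic, and the K-stability hypothesis is essential in the $T<1$ case as well.
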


Here a $G$-equivariant special degeneration is a special degeneration
$X\to\mathbf{C}$ in the sense of Tian~\cite{Tian97},
together with a holomorphic $G$ action which commutes
with the $\mathbf{C}^*$-action, preserves the fibers, and restricts to
the given action of $G$ on the generic fibers $X_t = M$ for $t\ne
0$. We also obtain an analogous result for K\"ahler-Ricci solitons,
and their twisted versions; see Definition~\ref{defn:Kstable} for
detailed definitions, and Proposition~\ref{prop:main} for the
most general result.  

An important special case is when $G$ is a torus. In particular if $M$
is a toric manifold, and $G= T^n$ is the $n$-torus, then
Proposition~\ref{prop:main} implies that we only need to check
special degenerations of the form $X = M\times \mathbf{C}$ to ensure the
existence of a K\"ahler-Einstein metric or K\"ahler-Ricci soliton on
$M$. In particular this recovers the result of Wang-Zhu~\cite{WZ04}
showing that all toric Fano manifolds admit a K\"ahler-Ricci
soliton. In addition we can recover the result of Li~\cite{Li11} on
the greatest lower bound on the Ricci curvature of toric Fano
manifolds.

A more interesting situation is when $G=T^{n-1}$, i.e. $M$ is
a complexity-one $T$-variety. In this case it is possible, in concrete
examples, to check all $G$-equivariant special degenerations of $M$,
and as a consequence we can obtain new examples of threefolds with
K\"ahler-Einstein metrics and K\"ahler-Ricci solitons. Work in
progress by Ilten-S\"uss~\cite{IS15}  suggests that we obtain
five new K\"ahler-Einstein threefolds. To our knowledge these are the first examples
where K-stability is used to obtain new K\"ahler-Einstein manifolds.

Our method of proof of Theorem~\ref{thm:main} is to use the classical
continuity path 
\[ \mathrm{Ric}(\omega_t) = t \omega_t + (1-t)\alpha \]
for $t\in [0,1]$ proposed by Aubin~\cite{Aub84}, and its analog for
K\"ahler-Ricci solitons studied by Tian-Zhu~\cite{TZ00}, and to show that if we
cannot find a solution for $t=1$, then there must be a $G$-equivariant
destabilizing special degeneration. In particular we obtain a new
proof of the result of Chen-Donaldson-Sun~\cite{CDS13_3}, without
using metrics with conical singularities. At the same time our
arguments are analogous to those in \cite{CDS13_3}, using also the adaptation of some
of those ideas to the smooth continuity method in \cite{Sz13_1}. 

A key advantage of the
smooth continuity path is that it allows one to work in a
$G$-equivariant setting. In contrast, in \cite{CDS13_3} one considers K\"ahler-Einstein
metrics singular along a smooth divisor $D\subset M$, and such a
divisor can not be $G$-invariant unless $G$ is finite (see
Song-Wang~\cite[Theorem 2.1]{SW12}). The
disadvantage of the smooth continuity path is that in effect one must
consider pairs $(V,\chi)$ of a variety $V$ together with a possibly
singular current $\chi$, as opposed to pairs $(V, D)$ of a variety and
a divisor. In \cite{CDS13_3} a destabilizing special degeneration
is obtained by applying the Luna slice
theorem, and for this we must restrict ourselves to a suitable finite
dimensional variety rather than the infinite dimensional space of
currents. For this the basic idea is to approximate a current $\chi$
by a sum of currents of integration along divisors. 

A brief outline of the paper is as follows. In
Section~\ref{sec:twistedKE} we collect some basic definitions and
results on twisted K\"ahler-Ricci solitons. The proof of the main result,
Proposition~\ref{prop:main}, will then be given in
Section~\ref{sec:mainthm}. We give some examples of the applications of our results
to toric manifolds and other manifolds of large symmetry group in
Section~\ref{sec:examples}.
In Section~\ref{sec:partialC0} we discuss
how to adapt the methods of \cite{Sz13_1} and \cite{PSS12} to obtain
the partial $C^0$-estimates along the continuity method for solitons. A
crucial point is the reductivity of the automorphism group of the
limiting variety. This essentially follows from the work of
Berndtsson~\cite{Ber13} as used in \cite{CDS13_3}, but
since we did not find the exact statement that
we need in the literature, we give a brief exposition in
Section~\ref{sec:reductive}.  

\section{Twisted K\"ahler-Ricci solitons}\label{sec:twistedKE}
Suppose that $W$ is a $\mathbf{Q}$-Fano manifold, with
log terminal singularities. In particular a power $K_{W_0}^r$ of the canonical
bundle on the regular set $W_0$ extends as a 
line bundle on $W$.  We say that a metric $h$ on $K_{W_0}^{-1}$ is
continuous on $W$, if the induced metric on $K_{W_0}^{-r}$ extends to
a continuous metric on $K_W^{-r}$. Fixing an open cover $\{U_i\}$ and local
trivializing holomorphic sections $\sigma_i$ of $K_V^{-r}|_{U_i\cap
  W_0}$, we will write 
\[ |\sigma_i|^2_{h^r} = e^{-r\phi_i}, \]
for continuous functions $\phi_i$ on $U_i$. We will write the metric
$h$ simply as $e^{-\phi}$ following the notation in
Berndtsson~\cite{Ber13}. In particular $e^{-\phi}$ defines a volume
form on $W_0$, given in a local chart $U_i$ by
\[ e^{-\phi} = |\sigma_i|^{2/r}_{h^r}(\sigma_i \wedge
\overline{\sigma}_i)^{-1/r}. \]
The log terminal condition says that this volume form has finite volume. We
write $\omega_\phi$ for the curvature current of the metric $e^{-\phi}$ on $W_0$, so
in our local charts $\omega_\phi = \ddbar\phi_i$. Since the potentials
$\phi_i$ are locally bounded, by Bedford-Taylor~\cite{BT76}
we can form the wedge product
$\omega_\phi^n$, which defines a measure on $W_0$, and also on $W$
extending it trivially. The metric $h_\phi$ is a weak K\"ahler-Einstein
metric if $\omega_\phi$ is a K\"ahler current, and we have
\[ e^{-\phi} = \omega_\phi^n. \]

Berman and Witt-Nystr\"om~\cite{BW14} have studied the analogous notion
of weak K\"ahler-Ricci solitons. Suppose that $v$ is a holomorphic
vector field on $W_0$, whose imaginary part generates the action of a
torus $T$ on $W$ (see
Berman-Boucksom-Eyssidieux-Guedj-Zeriahi~\cite[Lemma 5.2]{BBEGZ11} to
see that one obtains an action on $W$). 
A K\"ahler-Ricci soliton on $(W,v)$ is a $T$ invariant
continuous metric $e^{-\phi}$, smooth on $W_0$ with positive curvature current
$\omega_\phi$ satisfying
\[ e^{-\phi} = e^{\theta_v}\omega_\phi^n. \]
Here $e^{\theta_v}\omega_\phi^n$ is a measure defined in \cite{BW14}
for general $\phi$. If $\phi$ is smooth, then 
$\theta_v$ is simply a Hamiltonian function for the vector field $v$,
satisfying
\[ L_v \omega_\phi = \ddbar \theta_v, \]
with the normalization
\[ \label{eq:aa1} \int_{W_0} e^{\theta_v} \omega_\phi^n = \int_{W_0}
\omega_\phi^n = V. \]
In particular $\theta_v$ depends on $\phi$. 
For continuous metrics $h_\phi$ (or more general metrics with positive
curvature current), the measure constructed in \cite{BW14} still
satisfies the normalization \eqref{eq:aa1}. In addition by
\cite[Corollary 2.9]{BW14} we have some fixed constant $C$ (depending only
on $M, v$), such that 
\[ \label{eq:aaa3} C^{-1} \omega_\phi^n \leq e^{\theta_v} \omega_\phi^n \leq  C
\omega_\phi^n. \]

We now use this to 
define the twisted analogs of K\"ahler-Ricci solitons, which arise
naturally along the continuity method. 
Suppose that $e^{-\psi}$ is another metric on $K_{W_0}^{-1}$ which in
our local charts is given by plurisubharmonic functions $\psi_i \in
L^1_{loc}(U_i\cap W_0)$. 

\begin{defn} For $t\in (0,1)$ we say that the pair $(W,
(1-t)\psi)$ is $klt$, if in each chart $U_i\cap W_0$ the function
$e^{-\psi_i}$ is integrable, with respect to the volume form
$(\sigma_i \wedge \overline{\sigma}_i)^{-1/r}$. We will on occasion
write $(W, (1-t)\omega_\psi)$ for the pair, where as before
$\omega_\psi$ is the curvature of $e^{-\psi}$. 
\end{defn}

Equivalently we can
think of $e^{-t\phi - (1-t)\psi}$ as a volume form on $W_0$ with
$e^{-\phi}$ being a continuous metric as above. The $klt$ condition is then
\[ \int_{W_0} e^{-t\phi - (1-t)\psi} < \infty. \]

\begin{defn} \label{defn:tKRS} A twisted K\"ahler-Ricci soliton on the triple $(W,
  (1-t)\psi, v)$, where $v$ is a holomorphic vector field as above,
  is a continuous metric $e^{-\phi}$ such that
  \[ \label{eq:soleq} e^{-t\phi - (1-t)\psi} = e^{\theta_v} \omega_\phi^n. \]
This equation is interpreted as an equality of measures on $W_0$, and
in particular $e^{-\phi}$ here need not be smooth on $W_0$, so
$e^{\theta_v}\omega_\phi^n$ is the measure defined by
Berman-Witt-Nystr\"om~\cite{BW14}. Note
that the existence of such a metric implies that $(W, (1-t)\psi)$ is
$klt$. When $t=1$ or $v=0$, we will simply omit the corresponding
term in the triple. So we can talk about a K\"ahler-Einstein metric on
$W$, a twisted K\"ahler-Einstein metric on
$(W, (1-t)\psi)$, or a K\"ahler-Ricci soliton on $(W,v)$. 
\end{defn}

\begin{remark}\label{rem:smoothsoliton}
If $W, \phi, \psi$ are smooth, then the twisted K\"ahler-Ricci soliton
equation is equivalent (up to adding a constant to $\phi$) to 
\[ \label{eq:aa2} \mathrm{Ric}(\omega_\phi) - L_v \omega_\phi = t\omega_\phi +
(1-t)\omega_\psi, \]
which is the natural continuity path for finding K\"ahler-Ricci
solitons, used by Tian-Zhu~\cite{TZ00} for instance. 

Even when $W$ is
normal and $\phi$ is only continuous, it is useful to have an equation
for twisted K\"ahler-Ricci solitons in the form \eqref{eq:aa2}. For
this the extra condition needed is that the measure
$e^{\theta_v}\omega_\phi^n$ defines a singular metric $e^{-\tau}$ on
$K_W$, with $\tau\in L^1_{loc}$. Then $\phi$ defines a twisted
K\"ahler-Ricci soliton on $(W, (1-t)\psi, v)$ if 
\[ \label{eq:aa3} \omega_\tau = t\omega_\phi + (1-t)\omega_\psi, \]
where $\omega_\tau$ is the curvature of $e^{-\tau}$. 
Note that by an argument similar
to Berman-Boucksom-Eyssidieux-Guedj-Zeriahi~\cite[Proposition
3.8]{BBEGZ11}, 
if $e^{-\tau}$ is only defined
outside a subset $S\subset W$ with $(2n-2)$-dimensional Hausdorff
measure $\Lambda_{2n-2}(S) = 0$, and Equation~\eqref{eq:aa2} holds on
$W\setminus S$, then $e^{-\phi}$ is a twisted K\"ahler-Ricci
soliton. Indeed in this case $e^{-\tau}$ extends as a singular metric with positive
curvature current over all of $W$ (see Harvey-Polking~\cite[Theorem
1.2]{HP75}, Demailly~\cite{Dem85}),
and then \eqref{eq:aa3} implies that up to modifying $\psi$ by a
constant, we must have
\[ e^{\theta_v}\omega_\phi^n = e^{-\tau} = e^{-t\phi - (1-t)\psi}, \]
since \eqref{eq:aa3} implies that $f = \tau - t\phi - (1-t)\psi$ is a
global $L^1$ function with $\ddbar f =0$ on $W$.  
\end{remark}

We need the following result, generalizing the
classical results of Bando-Mabuchi~\cite{BM85} and
Matsushima~\cite{Mat57},  which are essentially
contained in Berndtsson~\cite{Ber13},
Boucksom-Berman-Eyssidieux-Guedj-Zeriahi~\cite{BBEGZ11}, 
Berman-Witt-Nystr\"om~\cite{BW14} and
Chen-Donaldson-Sun~\cite{CDS13_3}.  We will give an outline proof in
Section~\ref{sec:reductive}. 
\begin{prop} \label{prop:uniqueness}
  Suppose that $e^{-\phi_0}, e^{-\phi_1}$ are two twisted
  K\"ahler-Ricci solitons on $(W, (1-t)\psi,v)$. Then there exists a
  holomorphic vector field $w$ on $W$, commuting with $v$ and
  satisfying $\iota_w \omega_\psi
  = 0$, such that the biholomorphisms $F_t:W\to W$ induced by $w$
  satisfy $F_1^*(\omega_{\phi_1}) = \omega_{\phi_0}$.
  In addition $L_{\mathrm{Im}\,w}\omega_\phi = 0$. 
\end{prop}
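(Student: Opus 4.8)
The plan is to follow the Bando--Mabuchi strategy, as adapted by Berndtsson to the singular setting, applied to the twisted soliton equation. Since $e^{-\phi_0}$ and $e^{-\phi_1}$ are both twisted K\"ahler-Ricci solitons on $(W,(1-t)\psi,v)$, on the regular set $W_0$ both $\omega_{\phi_0}$ and $\omega_{\phi_1}$ represent $c_1(K_W^{-1})$ and each solves the twisted soliton equation $e^{-t\phi_i - (1-t)\psi} = e^{\theta_v}\omega_{\phi_i}^n$. Write $\phi_s = (1-s)\phi_0 + s\phi_1$ and introduce the twisted Ding-type functional (or equivalently the twisted Mabuchi/$K$-energy weighted by $e^{\theta_v}$) along this path; the twisting by $(1-t)\psi$ is a convex term so it causes no loss of convexity. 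The first step is to establish, via Berndtsson's theorem on psh variation of Bergman kernels (as invoked in \cite{Ber13} and \cite{CDS13_3}), that the relevant functional is convex along $\phi_s$, and \emph{strictly} convex unless the path $\phi_s$ arises from a one-parameter family of automorphisms. Because $\phi_0$ and $\phi_1$ are both critical points, convexity forces the functional to be affine along the path, which is the degenerate case.

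The second step is to extract the holomorphic vector field $w$ from this degeneracy. The affineness of the functional, combined with the equality case in Berndtsson's subharmonicity/positivity statement, produces a holomorphic vector field $w$ on $W_0$ (a priori) whose flow $F_s$ satisfies $F_s^*\omega_{\phi_1} = \omega_{\phi_s}$ up to the relevant normalization, so in particular $F_1^*\omega_{\phi_1} = \omega_{\phi_0}$. One then needs: (i) that $w$ extends holomorphically across the singular set of $W$ --- this follows since $W$ is normal and $w$ is bounded in a suitable sense (its potential is controlled by $\phi_1 - \phi_0$, which extends continuously to $W$), using a Hartogs-type extension as in \cite{BBEGZ11}; (ii) that $w$ commutes with $v$ --- this is because the whole construction is $T$-equivariant (both solitons are $T$-invariant, so the path $\phi_s$ is $T$-invariant and $w$ is $T$-invariant, hence $[v,w]=0$); and (iii) that $L_{\mathrm{Im}\,w}\omega_\phi = 0$, i.e. $\mathrm{Im}\,w$ is Killing, which follows from the reality of the construction: the functional is real along the real path $\phi_s$, so $w$ generates a path of \emph{real} holomorphic transformations, forcing $\mathrm{Im}\,w$ to be Hamiltonian Killing for $\omega_\phi$.

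The third step is to show $\iota_w\omega_\psi = 0$. This is where the twisting enters crucially: since $\omega_{\phi_s}$ is obtained from $\omega_{\phi_1}$ by pullback along $F_s$, applying $F_s^*$ to the soliton equation $e^{-t\phi_1-(1-t)\psi} = e^{\theta_v}\omega_{\phi_1}^n$ and comparing with the soliton equation for $\phi_s$, the terms $e^{\theta_v}\omega_{\phi_s}^n$ match automatically (since $\theta_v$ transforms correctly under the $v$-commuting flow), so we are left with $F_s^*(e^{-(1-t)\psi}) = e^{-(1-t)\psi}$ as measures, i.e. the current $(1-t)\omega_\psi$ is $F_s$-invariant. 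Differentiating in $s$ gives $L_w\bigl((1-t)\omega_\psi\bigr) = 0$; since $\omega_\psi$ is closed this reads $d\,\iota_w\omega_\psi = 0$, and combined with the fact that $\iota_w\omega_\psi$ is $\dbar$-closed of type $(1,0)$ on the compact $W$ (or using that the potential function for $\iota_w\omega_\psi$ must be both holomorphic and globally bounded, hence constant) we conclude $\iota_w\omega_\psi = 0$.

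The main obstacle is the first step in the singular setting: making Berndtsson's convexity and rigidity argument work when $\phi_0,\phi_1$ are merely continuous on a $\mathbf{Q}$-Fano variety $W$ with klt singularities, rather than smooth metrics on a smooth Fano. One must run the Bergman-kernel approximation on $W_0$, control everything near the singular locus using the klt/finite-volume hypothesis and the two-sided bound \eqref{eq:aaa3} on $e^{\theta_v}\omega_\phi^n$, and then invoke the Hartogs extension to transfer conclusions from $W_0$ back to $W$. The pieces for this are spread across \cite{Ber13}, \cite{BBEGZ11}, \cite{BW14} and \cite{CDS13_3}, and the role of Section~\ref{sec:reductive} will be to assemble them into the precise statement of Proposition~\ref{prop:uniqueness}, the key consequence being reductivity of $\mathrm{Aut}(W)$ (with the soliton vector field), which is what the continuity-method argument in Section~\ref{sec:mainthm} actually consumes.
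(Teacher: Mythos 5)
Your overall strategy (convexity of a twisted Ding functional along a path joining the two solitons, then the equality/rigidity case of Berndtsson's theorem to extract the vector field) is the same as the paper's, but there is a genuine gap at the very first step: you take the \emph{linear} path $\phi_s=(1-s)\phi_0+s\phi_1$. Berndtsson's convexity of $\mathcal{F}(s)=-\log\int_W e^{-t\phi_s-(1-t)\psi}$ requires $\Phi(x,s)$ to be plurisubharmonic \emph{jointly} in $(x,s)$ (a subgeodesic); the linear interpolation fails this, since the $(s,\bar s)$ entry of its complex Hessian vanishes while the mixed terms $\partial_s\partial_{\bar x}\Phi=\tfrac12\partial_{\bar x}(\phi_1-\phi_0)$ do not, unless $\phi_1-\phi_0$ is constant. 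There is also a structural warning sign: along the linear path the energy $E_v$ is strictly concave unless $\dbar(\phi_1-\phi_0)=0$ a.e.\ with respect to $\omega_{\phi_s}^n$, so if the convexity-plus-critical-endpoints argument went through on that path it would force $\omega_{\phi_1}=\omega_{\phi_0}$ outright, contradicting the existence of nontrivial automorphisms (e.g.\ on $\mathbf{P}^n$). The paper instead connects $\phi_0,\phi_1$ by the bounded, Lipschitz-in-$s$ \emph{geodesic} of Lemma~\ref{geodesics} (itself a nontrivial input in the singular setting), along which $E_v$ is affine by \cite[Proposition 2.17]{BW14} and $\mathcal{F}$ is convex by Proposition~\ref{convexity}; vanishing one-sided derivatives at $s=0,1$ then force $\mathcal{F}$ affine, and the rigidity statement of Proposition~\ref{convexity} produces the fields $w_s$ with $F_s^*\omega_{\phi_s}=\omega_{\phi_0}$ and, directly from the equality case (contracting $\iota_{\overline{\mathcal{W}}_s}\iota_{\mathcal{W}_s}$ into $\ddbar_{s,W}\tau_s=0$ and using positivity of both summands), $\iota_{w_s}\ddbar\psi=0$ — so your step 3 is obtained for free rather than by your pullback-of-measures comparison.

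Two further points you gloss over that the paper must (and does) address: the field $w_s$ produced by the rigidity argument is a priori $s$-dependent, and one needs the integrability of $e^{-\phi_0}$ together with Berndtsson's \cite[Proposition 8.2]{Ber13} to conclude $(F_s^{-1})_*w_s=w_0$, i.e.\ that a single vector field $w=w_0$ generates the whole flow; and the identity $[v,w]=0$ is derived in the paper not from $T$-equivariance of the construction but by observing that $\omega_{\phi_s}$ satisfies the soliton equation with both $v$ and $(F_s)_*v$, whence $\xi_s=(F_s)_*v-v$ has a pluriharmonic Hamiltonian and must vanish. Your $T$-equivariance heuristic is plausible but would need to be checked at the level of the $L^2$-minimization producing $\nu_s$; as written it is an assertion, not an argument.
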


\begin{defn}\label{defn:aut}
  For any triple $(W, (1-t)\psi, v)$ we define the Lie algebra
\[ \mathfrak{g}_{W, \psi, v} = \{ w\in H^0(TW)\,:\,
\iota_w\omega_\psi = 0 \text{ and } [v,w]=0\}. \]

%  since the condition that $\mathrm{Aut}(W)$ preserves $\omega_\psi$
 % means that $L_w \omega_\psi=0$ on the Lie algebra level. Since
 % \[ L_w \omega_\psi = d(\iota_w \omega_\psi), \]
 % we have $L_w \omega_\psi =0$ only if $\d (\iota_w
 % \omega_\psi)=0$, which implies that $\iota_w \omega_\psi = \dbar f$
 % for some $f$ (a priori a distribution). But then we have $\ddbar f
 % =0$, so $f$ is a constant, which implies $\iota_w\omega_\psi =0$.
  
  \noindent As before, we may omit $\psi$ or $v$ from the notation if $t=1$
  or $v=0$. In particular $\mathfrak{g}_W = H^0(TW)$. We will also
  write $\mathfrak{g}_{W, \beta} = \mathfrak{g}_{W, \psi}$ if $\beta =
  \omega_\psi$ is the curvature of $e^{-\psi}$. Using a
  projective embedding into $\mathbf{P}^N$, we can realize $\mathfrak{g}_{W, \psi, v}$ as a
  subalgebra of $\mathfrak{sl}(N+1,\mathbf{C})$. 
\end{defn}

  Note that for example $\mathfrak{g}_{W, \psi}$ is trivial if $\omega_\psi$ is
  strictly positive and $t < 1$. In fact Berndtsson~\cite[Proposition 8.2]{Ber13}
  implies that if $e^{-\psi}$ is integrable, then
  $\mathfrak{g}_{W, \psi}$ is trivial. In our application, when $(W,
  (1-t)\psi)$ is klt,  $e^{-(1-t)\psi}$ will be integrable, but
  $e^{-\psi}$ will typically not be.

  Note also that the Lie group with Lie algebra $\mathfrak{g}_{W,
    \psi}$ will usually be strictly smaller than the identity component of the group of
  biholomorphisms of $W$ preserving $\omega_\psi$. The difference comes
  from the fact that if $v$ is a real vector field then
  $L_v\omega_\psi$ does not imply $L_{Jv}\omega_\psi$ for the
  complex structure $J$, whereas our Lie algebra above is
  automatically closed under multiplication by $\sqrt{-1}$. On the
  other hand when $\omega_\psi = [D]$ is the current of integration along a
  divisor, then $\mathfrak{g}_{W, \psi}$ coincides with the vector
  fields on $W$ parallel to $D$. Indeed $\iota_v [D]=0$ is equivalent to $v$ being parallel to
  $D$ along the smooth part of $D$.

The following theorem generalizes \cite[Theorem 6]{CDS13_3}, which in
turn is a generalization of Matsushima's theorem~\cite{Mat57} on the
reductivity of the automorphism group of a K\"ahler-Einstein
manifold. We will give the proof in Section~\ref{sec:reductive}. 

\begin{prop} \label{prop:reductive}
  Suppose that $(W,(1-t)\psi,v)$ admits a twisted K\"ahler-Einstein
  metric $e^{-\phi}$. Then $\mathfrak{g}_{W, \psi, v}$ is reductive. In
  addition if $G$ is a group of biholomorphisms of $W$, fixing
  $\omega_\phi$ and $v$, then the centralizer $(\mathfrak{g}_{W, \psi,
    v})^G$ 
  is also reductive. 
\end{prop}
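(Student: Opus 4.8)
The plan is to follow the strategy of Matsushima's classical argument, in the form adapted to the twisted/soliton setting in \cite{BW14} and \cite{CDS13_3}. The key input is the Bando--Mabuchi type uniqueness result, Proposition~\ref{prop:uniqueness}: starting from the twisted K\"ahler-Einstein metric $e^{-\phi}$, any element $w\in\mathfrak{g}_{W,\psi,v}$ (integrated to a flow $F_s$) pulls $\omega_\phi$ back to another twisted K\"ahler-Einstein metric on $(W,(1-t)\psi,v)$, so by uniqueness there is an associated vector field, and composing flows one produces a one-parameter subgroup of biholomorphisms fixing $\omega_\phi$. I would phrase this as: the isometry group $K$ of $\omega_\phi$ (inside the biholomorphisms commuting with $v$ and killing $\omega_\psi$) is a maximal compact subgroup of the complex Lie group $\mathcal{G}$ with Lie algebra $\mathfrak{g}_{W,\psi,v}$, and moreover $\mathcal{G} = K^{\mathbf{C}}$. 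Reductivity of $\mathfrak{g}_{W,\psi,v}$ is then immediate, since a complex Lie group that is the complexification of a compact group is reductive.

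To carry this out concretely I would decompose $\mathfrak{g}_{W,\psi,v}$ into real and imaginary parts relative to $K$: write each $w$ as $w = \xi - \sqrt{-1}J\xi$-type pieces, i.e.\ show $\mathfrak{g}_{W,\psi,v} = \mathfrak{k} \oplus \sqrt{-1}\,\mathfrak{k}$ where $\mathfrak{k} = \mathrm{Lie}(K)$. The inclusion $\mathfrak{k}\oplus\sqrt{-1}\,\mathfrak{k}\subseteq\mathfrak{g}_{W,\psi,v}$ is clear since $\mathfrak{g}_{W,\psi,v}$ is a complex subalgebra; for the reverse inclusion one takes $w\in\mathfrak{g}_{W,\psi,v}$, uses the Hodge-theoretic description of holomorphic vector fields as $w = \mathrm{grad}^{1,0} f$ for a complex-valued potential $f$ (on the smooth locus $W_0$, with $f$ extending appropriately since $W$ has klt, hence rational, singularities and $H^1(\mathcal{O}_W)=0$), and then shows that the linearization of the soliton equation \eqref{eq:soleq} forces the real and imaginary parts of $f$ to separately satisfy the relevant eigenvalue equation for the twisted--weighted Laplacian $\Delta_{\phi,v} f - v(f) = f$ (the weight $e^{\theta_v}$ coming from the soliton term, the shift from the $t\omega_\phi$ term, and the twisting $\omega_\psi$ contributing nonnegatively because $\iota_w\omega_\psi=0$). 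Here one needs the regularity of $e^{-\phi}$ and the integration-by-parts / maximum-principle machinery on the singular variety; I would quote the versions already established in \cite{BW14} and \cite{BBEGZ11} rather than redo them. The upshot is that $\mathrm{Re}\,w$ and $\mathrm{Im}\,w$ separately lie in $\mathfrak{k}$, giving $w\in\mathfrak{k}\oplus\sqrt{-1}\,\mathfrak{k}$.

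For the second, centralizer statement, let $G$ be a group of biholomorphisms fixing $\omega_\phi$ and $v$. Then $G$ normalizes $K$ (since $K$ is canonically attached to $\omega_\phi$ and $v$), and hence acts on $\mathfrak{g}_{W,\psi,v} = \mathfrak{k}^{\mathbf{C}}$ preserving the real form $\mathfrak{k}$; because $G$ acts by (a compact group of) isometries of the $K$-invariant inner product on $\mathfrak{k}$, the fixed subalgebra $(\mathfrak{g}_{W,\psi,v})^G = (\mathfrak{k}^{\mathbf{C}})^G = (\mathfrak{k}^G)^{\mathbf{C}}$ is again the complexification of a compact Lie algebra, hence reductive. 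Equivalently: the $G$-action on $\mathfrak{g}_{W,\psi,v}$ is completely reducible, so the fixed-point set is a direct summand, and a direct summand of a reductive Lie algebra that is itself an ideal-complemented subalgebra closed under the Cartan involution is reductive.

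The main obstacle, as usual in this circle of ideas, is the analytic step: justifying the integration by parts and the splitting of $f = \mathrm{Re}\,f + \sqrt{-1}\,\mathrm{Im}\,f$ into separate eigenfunctions when $\phi$ is only continuous on $W$ (smooth on $W_0$) and $W$ is singular, and in the presence of the twisting current $\omega_\psi$ which is genuinely singular (only $e^{-(1-t)\psi}$, not $e^{-\psi}$, is integrable). The point that saves us is precisely the condition $\iota_w\omega_\psi = 0$ built into $\mathfrak{g}_{W,\psi,v}$: it ensures the potential $f$ of $w$ can be taken to be pluriharmonic-like against $\omega_\psi$, so the twisting term drops out of the relevant identity and contributes no obstruction, exactly as the divisor-parallel condition does in \cite{CDS13_3}. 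I expect to handle the boundary/singular-locus contributions by the same cutoff argument used to prove Proposition~\ref{prop:uniqueness} in Section~\ref{sec:reductive}, so this proof will largely be a corollary of the estimates assembled there together with Proposition~\ref{prop:uniqueness} itself.
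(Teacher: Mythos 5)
Your first paragraph is, in substance, exactly the argument the paper gives: let $H\subset SL(N+1,\mathbf{C})$ be the connected group with Lie algebra $\mathfrak{g}_{W,\psi,v}$ and $K\subset H$ the isometries of $\omega_\phi$; for any $h\in H$ the metric $h^*\omega_\phi$ is again a twisted K\"ahler--Ricci soliton for $(W,(1-t)\psi,v)$ (using that $L_w\omega_\psi=d\iota_w\omega_\psi=0$ and $[w,v]=0$ for $w\in\mathfrak{g}_{W,\psi,v}$), so Proposition~\ref{prop:uniqueness} produces $F\in K^c$ with $h^*\omega_\phi=F^*\omega_\phi$, whence $hF^{-1}\in K$ and $H=K^c$. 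Reductivity of $\mathfrak{k}$ comes from its embedding into $\mathfrak{su}(N+1)$ via the negative-definite trace form (equivalently, from compactness of $K$, as you say), and the centralizer statement goes through as you describe: $G$ fixes $\omega_\phi$ and $v$, hence preserves $\mathfrak{k}$ and acts complex-linearly, so $(\mathfrak{g}_{W,\psi,v})^G=(\mathfrak{k}^G)\otimes_{\mathbf{R}}\mathbf{C}$ is again the complexification of a compact Lie algebra.

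Where you depart from the paper --- and where you would run into genuine trouble --- is your second paragraph, in which you propose to ``carry this out concretely'' by writing $w=\mathrm{grad}^{1,0}f$ and showing that the real and imaginary parts of $f$ separately satisfy an eigenvalue equation for a twisted, weighted Laplacian. That Bochner-type step is precisely what is not available in this setting: $\phi$ is only continuous on $W$, $W$ is singular, and $\omega_\psi$ is a genuinely singular current, so the linearization, integration by parts, and splitting of $f$ are the ``main obstacle'' you yourself flag at the end, and the regularization used to prove Proposition~\ref{prop:uniqueness} does not directly deliver a pointwise eigenvalue equation for potentials of vector fields. The point of the argument of \cite{CDS13_3}, reproduced in the paper, is that this analysis is unnecessary: the inclusion $\mathfrak{k}\oplus\sqrt{-1}\,\mathfrak{k}\subseteq\mathfrak{g}_{W,\psi,v}$ is formal because the Lie algebra is complex, and the reverse inclusion is the group-level identity $H=K\cdot K^c=K^c$ obtained from uniqueness as above --- no Hodge theory, no weighted Laplacian, no cutoff. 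You should simply delete the second paragraph; your first and last paragraphs already constitute the proof.
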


We finally recall some properties of the ``twisted'' Futaki invariant,
generalizing the log-Futaki invariant in \cite{CDS13_3} and the modified
Futaki invariant of Tian-Zhu~\cite{TZ00}. For a smooth metric $e^{-\phi}$
on $K_{W_0}^{-1}$ we define
\[\label{eq:Futdef} 
\begin{aligned} \mathrm{Fut}_{(1-t)\psi, v}(W, w) = \mathrm{Fut}_v(W,w) -
\frac{1-t}{V}&\left[ \int_W \theta_w(e^{\theta_v} -
  1)\,\omega_\phi^n\right. \\
 &\quad \left.+ n
  \int_W \theta_w(\omega_\psi - \omega_\phi)\wedge
  \omega_\phi^{n-1}\right], 
\end{aligned}\]
where $\mathrm{Fut}_v(W,w)$ is Tian-Zhu's modified Futaki invariant,
which we write in the form
\[ \label{eq:modFut} \mathrm{Fut}_v(W,w) = \frac{1}{V}\int_W \theta_w
e^{\theta_v}\,\omega_\phi^n - \frac{\int_W \theta_w e^{-\phi}}{\int_W
  e^{-\phi}}.\]
This is shown to be equivalent to Tian-Zhu's definition by
He~\cite{He12}. One can check by direct calculation that our definition of the
twisted Futaki invariant is independent of the metric $e^{-\phi}$,
remembering that $\iota_w \omega_\psi =0$. 

We will on occasion write $\mathrm{Fut}_{(1-t)\omega_\psi, v}$ instead
of $\mathrm{Fut}_{(1-t)\psi, v}$, when the curvature of $e^{-\psi}$ is
more natural. We will need the following:
\begin{prop}\label{prop:Futvanish}
  If $(W,(1-t)\psi,v)$ admits a twisted K\"ahler-Ricci soliton, then
  \[\mathrm{Fut}_{(1-t)\psi, v}(W,w) = 0\]
  for all $w \in\mathfrak{g}_{W, \psi, v}$. 
\end{prop}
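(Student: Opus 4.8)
The plan is to use the independence of $\mathrm{Fut}_{(1-t)\psi,v}(W,\cdot)$ of the auxiliary metric (noted just after \eqref{eq:modFut}) and to evaluate the defining expression \eqref{eq:Futdef}, \eqref{eq:modFut} directly on a twisted K\"ahler-Ricci soliton $e^{-\phi}$, where the soliton equation \eqref{eq:soleq} forces the various terms to cancel.

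First I would treat the case in which $W$, $\phi$, $\psi$ are smooth. Then by Remark~\ref{rem:smoothsoliton} equation \eqref{eq:aa2} holds, so the modified Ricci potential $h_v$, defined by $\mathrm{Ric}(\omega_\phi)-\omega_\phi-\ddbar\theta_v=\ddbar h_v$, equals $(1-t)(\psi-\phi)$ up to an additive constant; equivalently, by \eqref{eq:soleq}, the density of $e^{-\phi}$ with respect to the measure $e^{\theta_v}\omega_\phi^n$ is $e^{(1-t)(\psi-\phi)}$. Substituting this into \eqref{eq:modFut} and then into \eqref{eq:Futdef}, the term $\int_W\theta_w e^{-\phi}/\int_W e^{-\phi}$ becomes the average of $\theta_w$ against $e^{(1-t)(\psi-\phi)}e^{\theta_v}\omega_\phi^n$. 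The remaining inputs are: (i) two integrations by parts, using $\omega_\psi-\omega_\phi=\ddbar(\psi-\phi)$ together with the relation $\theta_w+w(\psi-\phi)=\mathrm{const}$ forced by $\iota_w\omega_\psi=0$ (with $\theta_w$ normalised by $\iota_w\omega_\phi=\sqrt{-1}\dbar\theta_w$), which turns $n\int_W\theta_w(\omega_\psi-\omega_\phi)\wedge\omega_\phi^{n-1}$ into $\int_W\theta_w\,\omega_\phi^n$ up to a constant multiple of $V$; and (ii) the standard K\"ahler identities for commuting holomorphic vector fields — constancy of the Poisson bracket $\{\theta_v,\theta_w\}$ since $[v,w]=0$, together with the Bochner--Kodaira relation for $\Delta_{\omega_\phi}\theta_w$ — which handle the cross terms produced by the weight $e^{(1-t)(\psi-\phi)}$. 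Collecting everything, and noting that the additive constants drop out of \eqref{eq:Futdef} by the normalisations $\int e^{\theta_v}\omega_\phi^n=V$ and $\int(\omega_\psi-\omega_\phi)\wedge\omega_\phi^{n-1}=0$, all metric-dependent contributions cancel and $\mathrm{Fut}_{(1-t)\psi,v}(W,w)=0$. This is essentially the ``direct calculation'' already alluded to for the metric-independence statement. (Alternatively one can package it by observing that $\mathrm{Fut}_{(1-t)\psi,v}(W,w)$ is, up to sign, the derivative at $s=0$ of the twisted, modified Ding functional of Berman--Witt-Nystr\"om evaluated along the holomorphic flow generated by $w$; a twisted K\"ahler-Ricci soliton is a critical point of this functional, so the derivative vanishes.)

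For the general case — $W$ a normal $\mathbf{Q}$-Fano, $\phi$ merely continuous and possibly not smooth on $W_0$, and $\psi$ plurisubharmonic — I would pass to the smooth case by approximation. Since $\mathrm{Fut}_{(1-t)\psi,v}$ is independent of the smooth reference, it suffices to show that the right-hand side of \eqref{eq:Futdef}, computed along smooth metrics $e^{-\phi_j}\to e^{-\phi}$ and twistings $e^{-\psi_j}\to e^{-\psi}$ obtained by regularisation on a resolution, tends to $0$. Here one fixes a smooth K\"ahler form on $W$ to define $\theta_w$ once and for all (so $\theta_w$ is bounded on the compact $W$), keeps the soliton potential uniformly bounded, and passes to the limit using weak continuity of the mixed Monge--Amp\`ere measures $\omega_{\phi_j}^{\,k}\wedge\omega_{\phi}^{\,n-k}$ (Bedford--Taylor), the two-sided bound \eqref{eq:aaa3} on the Berman--Witt-Nystr\"om measure $e^{\theta_v}\omega_\phi^n$, and the $klt$ integrability of $e^{-(1-t)\psi}$. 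The integrations by parts are carried out on $W_0$, using monotone approximation of $\psi$ and the negligibility of polar sets for the Monge--Amp\`ere measures involved; the singular locus of $W$ has complex codimension at least two, with locally bounded potentials, so it contributes no boundary terms.

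I expect the main obstacle to be precisely this last, analytic step: giving meaning to, and proving stability under approximation of, the terms $\int_W\theta_w(\omega_\psi-\omega_\phi)\wedge\omega_\phi^{n-1}$ and $\int_W\theta_w e^{-\phi}$ when $\omega_\phi$ is only a positive current with bounded local potentials on a singular variety while $\omega_\psi$ is genuinely singular — this is exactly where the hypotheses \eqref{eq:aaa3} and the $klt$ condition enter. The algebraic cancellation in the smooth case is, by contrast, a routine if slightly lengthy computation.
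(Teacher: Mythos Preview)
Your treatment of the smooth case is fine and agrees with the paper's remark that ``if the twisted K\"ahler-Ricci soliton had smooth potential $\phi$, at least on $W_0$, then this would follow directly from the definitions.'' The interesting content is the singular case, and here your approach diverges from the paper's.

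Your parenthetical remark --- that $\mathrm{Fut}_{(1-t)\psi,v}(W,w)$ is (up to sign) the derivative of the twisted Ding functional along the flow of $w$ --- is exactly what the paper uses, but in the \emph{general} case rather than the smooth one. Concretely, the paper first derives, for any \emph{smooth} reference metric $e^{-\phi}$, the identity
\[
\mathrm{Fut}_{(1-t)\psi,v}(W,w) \;=\; \frac{t}{V}\int_W \theta_w\,e^{\theta_v}\omega_\phi^n \;-\; t\,\frac{\int_W \theta_w\,e^{-t\phi-(1-t)\psi}}{\int_W e^{-t\phi-(1-t)\psi}}
\;=\; -\,\frac{d}{ds}\,\mathcal{D}_{(1-t)\psi,v}(f_s^*\phi),
\]
and then observes that convexity of $\mathcal{D}_{(1-t)\psi,v}$ along geodesics, together with the existence of a critical point (the soliton), forces $\mathcal{D}_{(1-t)\psi,v}$ to be bounded below. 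Since the slope along the holomorphic flow is constant and the functional is bounded below, the slope must vanish. The soliton metric itself is never substituted into the Futaki formula; only its existence, via the lower bound on Ding, is used.

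By contrast, your approximation scheme runs into a structural difficulty: regularising $\psi$ to $\psi_j$ changes the invariant you are computing (you get $\mathrm{Fut}_{(1-t)\psi_j,v}$, not $\mathrm{Fut}_{(1-t)\psi,v}$), and regularising $\phi$ to $\phi_j$ destroys the soliton equation, so the cancellation you established in the smooth case no longer applies to the pair $(\phi_j,\psi_j)$. You would have to control the defect in the soliton equation quantitatively and show it produces a vanishing error in the Futaki expression --- possible in principle, but considerably more delicate than the Ding argument, and your outline does not indicate how you would do this. The paper's route sidesteps all of it: the formula above is proved once for smooth $\phi$ (where the integration by parts is unproblematic), and the passage to the singular soliton happens only through the variational lower bound.
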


If the twisted K\"ahler-Ricci soliton had smooth potential $\phi$, at
least on $W_0$, then this would follow directly from the
definitions. In general we obtain the result by relating 
the twisted Futaki invariant to the twisted Ding functional, and using
that the twisted Ding functional is bounded
below if there exists a twisted KR-soliton. This is analogous to an
argument in \cite{CDS13_3}, and the proof will be given in
Section~\ref{sec:reductive}.

\subsection*{Twisted stability}

Suppose now that $M$ is a smooth Fano manifold,
with a holomorphic vector field $v$ such that $\mathrm{Im}\,v$
generates a torus $T$. Suppose that $G$ is a compact group of
automorphisms of $M$, containing $T$. We embed $M\subset \mathbf{P}^N$ using
$G$-invariant sections of $K_M^{-m}$ for some $m$. Let $\alpha =
\frac{1}{m} \omega_{FS}|_M$, which we can write as the curvature of a
smooth metric $e^{-\psi_\alpha}$ on $K_M^{-1}$ in the
notation above. This metric will then be $G$-invariant.
It was shown by Dervan~\cite{Der14} that
twisted K-stability is a necessary condition for the existence of a
twisted KE metric on $(M, (1-t)\alpha)$, while a corresponding stability
notion for K\"ahler-Ricci solitons was developed by
Berman-Witt-Nystr\"om~\cite{BW14}. We can combine these ideas to
obtain a stability notion for twisted K\"ahler-Ricci solitons as
follows. 

The vector field $v$ on $M$ is the restriction of a holomorphic vector field on
$\mathbf{P}^N$, which we will also denote by $v$. The imaginary part
$\mathrm{Im}\,v$ corresponds to a matrix in $\mathfrak{u}(N+1)$, with
eigenvalues $\mu_i$, so that $v$ has Hamiltonian function
\[ \theta_v = \frac{\sum_i \mu_i |Z_i|^2}{\sum_i |Z_i|^2}\]
for suitable homogeneous coordinates $Z_i$. We assume that $\theta_v$
is normalized as before (i.e. $e^{\theta_v}$ has average 1 on $M$).

Under our embedding 
the group $G$ above can be thought of as a subgroup of $U(N+1)$. Suppose that we
have a $\mathbf{C}^*$-action $\lambda \subset GL(N+1,\mathbf{C})^G$,
generated by a vector field $w$ on $\mathbf{P}^N$, where $GL(N+1,
\mathbf{C})^G$ denotes the centralizer of $G$. 
Suppose that the central fiber $W = \lim_{t\to 0}\lambda(t)\cdot M$ is a
$\mathbf{Q}$-Fano variety. We can also take the limit 
\[ \beta = \lim_{t\to 0} \lambda(t)\cdot \alpha, \]
which is a closed positive current on $W$. 
The $\mathbf{C}^*$-action $\lambda$ defines a
special degeneration (in the terminology of
Tian~\cite{Tian97}), and its twisted Futaki invariant is defined to be
\[ \label{eq:tFdef}\mathrm{Fut}_{(1-t)\alpha, v}(M, w) = \mathrm{Fut}_{(1-t)\beta,
v}(W,w), \]
again omitting $\alpha, \beta$ or $v$ if $t=1$ or $v=0$.

\begin{defn}\label{defn:Kstable}
  We say that the triple $(M, (1-t)\alpha, v)$ is K-semistable
  (with respect to $G$-equivariant special degenerations),
  if $\mathrm{Fut}_{(1-t)\alpha, v}(M, w) \geq 0$ for all $w$ as
  above. The triple is K-stable if in addition
  equality holds only when $(W, (1-t)\beta)$ is biholomorphic to $(M,
  (1-t)\alpha)$, i.e. the pairs are in the same $GL^G$-orbit. 
\end{defn}

The terminology more consistent with existing literature would be
``twisted modified K-polystable'', but we hope no confusion is caused by
simply using the terminology ``K-stable''. 
Dervan~\cite{Der14} showed that if $(M, (1-t)\alpha)$ admits a twisted
K\"ahler-Einstein metric then it is K-stable, while
Berman-Witt-Nystr\"om showed that if $(M, v)$ admits a K\"ahler-Ricci
soliton, then it is K-stable in the sense of the above definition. We
expect that one can combine the arguments to show that if the triple
$(M, (1-t)\alpha, v)$ admits a twisted K\"ahler-Ricci soliton, then it
is K-stable, but we will not pursue that here. Our main result is a
result in the converse direction, the proof of which will be given in
Section~\ref{sec:mainthm}. 

\begin{prop}\label{prop:main}
 If $(M, (1-s)\alpha, v)$ is K-semistable for all $G$-equivariant
 special degenerations, then $(M, (1-t)\alpha, v)$ admits a
 twisted K\"ahler-Ricci soliton for all $t < s$.
 In addition if $(M, v)$ is K-stable, then $(M,v)$
 admits a K\"ahler-Ricci soliton.
\end{prop}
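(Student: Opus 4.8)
The plan is to run the classical continuity method of Aubin--Tian--Zhu along the path \eqref{eq:aa2}, working throughout in the $G$-equivariant category, and to show that the set of $t$ for which a twisted K\"ahler-Ricci soliton exists on $(M,(1-t)\alpha,v)$ is nonempty, open, and relatively closed in $(0,s)$. First I would establish \emph{nonemptiness} near $t=0$: when $t$ is small the twisting current $(1-t)\alpha$ dominates, so the relevant Monge--Amp\`ere equation has a solution by the standard Aubin--Yau-type argument, adapted to the soliton setting as in Tian--Zhu~\cite{TZ00} (the extra vector-field term is a lower-order perturbation that causes no trouble since $e^{\theta_v}$ is bounded above and below). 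Next, \emph{openness}: at a parameter $t_0$ where a smooth soliton $\omega_{\phi_0}$ exists, one linearizes \eqref{eq:aa2}; the linearized operator is (a zeroth-order perturbation of) the weighted Laplacian $\Delta_v$, whose kernel consists of real and imaginary parts of holomorphic vector fields in $\mathfrak{g}_{M,\psi_\alpha,v}$. By Proposition~\ref{prop:reductive} this Lie algebra is reductive, and in the $G$-invariant setting we can quotient by the (compact) group it generates — or, when $t_0>0$ and $\alpha$ is strictly positive so that $\mathfrak{g}_{M,\psi_\alpha,v}=0$, simply invert directly — to push $t_0$ to $t_0+\epsilon$ via the implicit function theorem.

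The crux is \emph{closedness}: suppose solitons $\omega_{\phi_{t_i}}$ exist for $t_i\to t_\infty\le s$ but not at $t_\infty$. Here I would invoke the partial $C^0$-estimate along the continuity path for solitons (the content of the later Section~\ref{sec:partialC0}, adapting \cite{Sz13_1,PSS12}), which provides a uniform lower bound on the Bergman-type density $\rho_{m,\omega_{\phi_{t_i}}}$ and hence uniform projective embeddings $\iota_{t_i}\colon M\hookrightarrow\mathbf{P}^N$ using $G$-invariant sections of $K_M^{-m}$. Passing to a subsequence, the images converge to a normal $\mathbf{Q}$-Fano variety $W$, and the metrics $\phi_{t_i}$ together with the pulled-back currents converge (in a suitable sense — Cheeger--Colding plus the Gromov--Hausdorff/algebro-geometric matching as in Chen--Donaldson--Sun) to a weak twisted K\"ahler-Ricci soliton $e^{-\phi_\infty}$ on $(W,(1-t_\infty)\beta,v)$, where $\beta=\lim\lambda(t)\cdot\alpha$ is a limit current. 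Because everything was done $G$-equivariantly, $W$ carries a $G$-action and the whole picture is realized inside a finite-dimensional $G$-invariant Hilbert scheme, so the Luna slice theorem applies: either $W\cong M$ (whence, using Proposition~\ref{prop:uniqueness} to move the soliton back by an automorphism, we get a soliton at $t_\infty$ itself, closing the interval), or $W$ is a genuine degeneration. The main obstacle — and the place where I expect the real work to be — is making this last convergence argument rigorous with the \emph{singular current} $\beta$ in place of a smooth divisor: one must approximate $\alpha$, or rather the pair $(W,(1-t_\infty)\beta)$, by sums of currents of integration along $G$-invariant divisors in order to stay in a finite-dimensional setting and control $\beta$ under the limit, exactly the point flagged in the introduction as the price of the smooth continuity path.

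In the degeneration case, the second half of the argument is an \emph{energy/Futaki} computation. Following \cite{CDS13_3} and \cite{Sz13_1}, one shows that the failure of properness of the twisted Ding (or Mabuchi) functional along the path forces the limiting special degeneration $\lambda$ to have nonpositive twisted modified Futaki invariant $\mathrm{Fut}_{(1-t_\infty)\alpha,v}(M,w)\le 0$; by Proposition~\ref{prop:Futvanish} it in fact vanishes since $(W,(1-t_\infty)\beta,v)$ admits a (weak) twisted soliton, and then Definition~\ref{defn:Kstable} combined with the hypothesis of $G$-equivariant K-semistability for all $s'<s$ forces $(W,(1-t_\infty)\beta)$ to lie in the $GL^G$-orbit of $(M,(1-t_\infty)\alpha)$ — contradicting that it was a nontrivial degeneration, or else directly giving the soliton at $t_\infty$. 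Iterating, the soliton exists for every $t<s$. Finally, for the statement about K\"ahler-Ricci solitons on $(M,v)$: assuming $(M,v)$ is K-stable we take $s=1$, obtain solitons for all $t<1$, run the closedness argument one more time at $t_\infty=1$, and now the \emph{stability} (not just semistability) hypothesis upgrades the conclusion — the limit $W$ with its vanishing modified Futaki invariant must be $GL^G$-equivalent to $M$ itself, so pulling back by the resulting automorphism and applying Proposition~\ref{prop:uniqueness} yields a genuine K\"ahler-Ricci soliton on $(M,v)$.
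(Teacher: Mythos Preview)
Your overall architecture is right and matches the paper: continuity method, partial $C^0$-estimate, Gromov--Hausdorff limit $W$ with a weak twisted soliton on $(W,(1-T)\beta,v)$, reductivity, Luna slice to produce a $\mathbf{C}^*$-action $\lambda$ with $W=\lim\lambda(t)g\cdot M$, and then a Futaki-invariant contradiction. You also correctly flag that the current $\beta$ forces an approximation by hyperplane sections.

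The genuine gap is in your Futaki step. You write that an energy argument gives $\mathrm{Fut}_{(1-t_\infty)\alpha,v}(M,w)\le 0$, and that Proposition~\ref{prop:Futvanish} then makes it vanish, whence K-semistability forces $(W,\beta)$ into the $GL^G$-orbit of $(M,\alpha)$. There are two problems. First, by Definition~\eqref{eq:tFdef}, $\mathrm{Fut}_{(1-t_\infty)\alpha,v}(M,w)$ is computed on the pair $(W,\beta')$ where $\beta'=\lim\lambda(t)\cdot\alpha$, and there is no reason $\beta'$ equals the Gromov--Hausdorff limit current $\beta$; the one-parameter group $\lambda$ comes from Luna slice applied to a \emph{finite} tuple $(W,W\cap\rho_\infty(H_1),\ldots)$ and carries no information about how $\alpha$ moves. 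So neither the energy argument nor Proposition~\ref{prop:Futvanish} gives you control of $\mathrm{Fut}_{(1-t_\infty)\alpha,v}(M,w)$ directly. Second, K-\emph{semi}stability at level $t_\infty$ never forces a degeneration with vanishing invariant to be trivial; you need a strict inequality somewhere.

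The paper resolves both issues simultaneously, and this is the real content of the $T<1$ case. It never computes $\mathrm{Fut}_{(1-T)\alpha,v}(M,w)$. Instead it uses (i) the vanishing $\mathrm{Fut}_{(1-T)\beta,v}(W,w)=0$ on the \emph{actual} limit current $\beta$, (ii) the K-semistability inequality at the \emph{higher} level $s>T$, rewritten via Proposition~\ref{prop:thetaform} as an inequality involving only $\mathrm{Fut}_v(M,w)$ and $\max_W\theta_w$, and (iii) an $\epsilon$-approximation of $\int_W\theta_w\,\beta\wedge\omega_{FS}^{n-1}$ by averages over finitely many hyperplane sections $W\cap\rho_\infty(H_j)$ (Lemma~\ref{lem:inttheta} and Lemma~\ref{lem:stabequal}), where the $H_j$ are chosen so that the \emph{same} $\lambda$ degenerates each $M\cap H_j$ to $W\cap\rho_\infty(H_j)$. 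Combining (i)--(iii) yields an inequality of the form $0\le C\epsilon\Vert w\Vert - (s-T)\cdot c\Vert w\Vert$, and the gap $s-T>0$ forces $w=0$ once $\epsilon$ is small. The approximation by divisors is thus doing double duty: it both cuts the stabilizer down to something finite-dimensional (your stated use) \emph{and} provides the bridge between $\beta$ and the test-configuration Futaki invariant. Your outline is missing this second role, and without it the contradiction does not close.
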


Note that we also expect that if $(M, (1-t)\alpha, v)$ is K-stable, then $(M,
(1-t)\alpha, v)$ admits a twisted K\"ahler-Ricci soliton, however this
does not quite follow from our arguments. 

A key ingredient in our arguments is a comparison of the twisted
and untwisted Futaki invariants and from \eqref{eq:Futdef} it follows that 
\[ \label{eq:tdepend}\begin{aligned}
  &\mathrm{Fut}_{(1-t)\alpha, v}(M, w) = \mathrm{Fut}_{v}(M,w)
  \\
&\quad  -\frac{1-t}{V}\left[ \int_W \theta_w (e^{\theta_v}-1)\,\omega_\phi^n 
  +n\int_W \theta_w(\beta - \omega_\phi)\wedge \omega_\phi^{n-1}\right].
\end{aligned}\]
Recall here that $(W, \beta)$ is the limit of the pair $(M, \alpha)$
under the $\mathbf{C}^*$-action generated by $w$. 
The following result builds on work in \cite{LSz13} and
Dervan~\cite{Der14}.

\begin{prop}\label{prop:thetaform} Using the same setup as above,
  we have the formula
 \[  \label{eq:c1} \frac{1}{V} \int_W \theta_w \Big[ (n+1)\omega_\phi - n
  \beta\Big]\wedge \omega_\phi^{n-1} = \max_W\theta_w. \]
\end{prop}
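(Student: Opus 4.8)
The plan is to reduce the formula to a statement about the limiting behavior of certain integrals under the $\mathbf{C}^*$-action generated by $w$, and then to identify the resulting invariant with $\max_W \theta_w$. First I would recall that both $\omega_\phi$ (the curvature of a smooth $G$-invariant metric on $K_W^{-1}$ obtained as a limit of metrics on $K_M^{-1}$) and $\beta = \lim_{t\to 0}\lambda(t)\cdot\alpha$ live on $W$, and that $\theta_w$ is the Hamiltonian for the generator $w$ of $\lambda$, normalized so that it represents the weight of the $\mathbf{C}^*$-action on the central fiber. The left-hand side of \eqref{eq:c1} is, up to the normalization $1/V$, a difference of two intersection-type quantities: $\frac{n+1}{V}\int_W \theta_w\,\omega_\phi^n$ and $\frac{n}{V}\int_W\theta_w\,\beta\wedge\omega_\phi^{n-1}$. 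The key observation, as in \cite{LSz13} and \cite{Der14}, is that each of these has an interpretation in terms of the asymptotics of a Monge-Ampère energy (or its twisted analog) along the one-parameter subgroup $\lambda$, evaluated on the test configuration $(M,\alpha)\rightsquigarrow(W,\beta)$.

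The main steps I would carry out are as follows. I would first express $\frac{1}{V}\int_W\theta_w\,\omega_\phi^n$ as the derivative at $t=0$ of the Monge-Ampère energy $E$ along the path $\lambda(e^{-s})^*$, which for a polarization by $K_W^{-1}$ on an $n$-fold is the classical Aubin-Mabuchi functional; by the theory of Duistermaat-Heckman measures for test configurations (or by a direct localization argument on the total space of the degeneration) this quantity equals the barycenter of the Duistermaat-Heckman measure of $w$ on $H^0(M, K_M^{-m})$, rescaled appropriately. Similarly $\frac{n}{V}\int_W\theta_w\,\beta\wedge\omega_\phi^{n-1}$ should be identified, following Dervan's twisted-energy computation, with the analogous slope of the twisted term. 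The combination $(n+1)\omega_\phi - n\beta$ is chosen precisely so that these two slope contributions telescope, and what survives is the \emph{top} of the moment polytope/spectrum rather than the barycenter. Concretely, I expect the right normalization of $\theta_w$ (with $e^{\theta_v}$-average conditions playing no role here since $v$ does not appear) makes $\max_W\theta_w$ equal to the largest weight $\mu_{\max}$ of $w$ acting on the relevant space of sections on the central fiber, and the identity reduces to a known formula expressing $(n+1)$ times the barycenter minus $n$ times a boundary-weighted barycenter as $\mu_{\max}$. I would verify this last algebraic identity by writing everything in terms of the Duistermaat-Heckman measure $\mathrm{DH}$ on $[\mu_{\min},\mu_{\max}]$, using that $\int x\,d\mathrm{DH}$, $\int x\,\beta\text{-weighted}$, and the normalization $\int d\mathrm{DH}=1$ combine to leave only the endpoint.

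The main obstacle I anticipate is making the identification of $\int_W\theta_w\,\beta\wedge\omega_\phi^{n-1}$ with a clean asymptotic invariant rigorous, because $\beta$ is only a closed positive \emph{current} on $W$ — the limit of the smooth Fubini-Study-type forms $\alpha$ — and is in general singular, so wedge products like $\beta\wedge\omega_\phi^{n-1}$ and the integration-by-parts needed to pass from $\int\theta_w$ against a current to a slope of an energy functional require care (one must invoke the fact that $\omega_\phi$ is smooth and $\theta_w$ is bounded, plus the monotone/continuity properties of the relevant energies under the $L^1$-convergence $\lambda(t)\cdot\alpha\to\beta$, as in \cite{BBEGZ11}). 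A secondary technical point is bookkeeping the normalizations: ensuring that the $\theta_w$ appearing in \eqref{eq:c1} is exactly the one whose maximum is the top weight, which amounts to pinning down an additive constant via the condition that $\theta_w$ is the Hamiltonian with respect to $\omega_\phi$ and that $\omega_\phi$ has total mass $V = \int_W\omega_\phi^n$. Once these are in place, the remaining computation is the elementary Duistermaat-Heckman identity and poses no difficulty.
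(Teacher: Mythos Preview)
Your strategy is in the right spirit but contains a genuine gap at the crucial step. You propose to reduce everything to an ``algebraic identity'' in terms of the Duistermaat--Heckman measure on $[\mu_{\min},\mu_{\max}]$, but the quantity $\int_W \theta_w\,\beta\wedge\omega_\phi^{n-1}$ is \emph{not} a functional of the DH measure of $(W,w)$ alone: it depends on the specific current $\beta$, which in turn depends on the starting metric $\alpha$ and on how the flow $\lambda(t)$ degenerates it. There is no universal identity of the form ``$(n+1)$ times barycenter minus $n$ times boundary-weighted barycenter equals $\mu_{\max}$'' that one can simply look up; the appearance of $\mu_{\max}$ is entirely due to the particular structure of $\beta$ as the limit $\lim_{t\to 0}\lambda(t)\cdot\alpha$ of the Fubini--Study metric, and your outline never exploits this.

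The paper's proof makes this structure explicit and that is where all the content lies. Since $\alpha$ is the restriction of $\omega_{FS}$, one disintegrates $\alpha = \int_{\mathbf{P}^{N*}}[M\cap H]\,d\mu(H)$ and hence $\beta = \int_{\mathbf{P}^{N*}}[W\cap H_0]\,d\mu(H)$ with $H_0=\lim_{t\to 0}\lambda(t)\cdot H$. For a generic hyperplane $H$ (one not passing through the locus where $\theta_w$ attains its maximum), the gradient flow of $-\theta_w$ pushes $H$ to the fixed hyperplane $\{Z_{\max}=0\}$. Thus computing $\int_W\theta_w\,\beta\wedge\omega_{FS}^{n-1}$ reduces to computing $\int_{W\cap\{Z_{\max}=0\}}\theta_w\,\omega_{FS}^{n-1}$, and this is done by an equivariant Riemann--Roch argument: comparing the weight asymptotics of the graded rings $R=\bigoplus R_k$ of $W$ and $S=R/Z_{\max}R$ of $W\cap\{Z_{\max}=0\}$ via $S_k = R_k/Z_{\max}R_{k-1}$ yields $w_k' = w_k - w_{k-1} - \mu_{\max}\dim R_{k-1}$, whose leading term gives exactly $\frac{1}{n}\int_W[(n+1)\theta_w-\mu_{\max}]\,\omega_{FS}^n$. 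That identity is Lemma~\ref{lem:inttheta}, and plugging it into the disintegration of $\beta$ gives \eqref{eq:c1} immediately. Your energy-asymptotics framing could perhaps be made to work, but to close it you would still need to identify what the flow does to $\alpha$, and at that point you are essentially redoing the paper's computation.
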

We will give the proof below, after Lemma~\ref{lem:inttheta}. For now
note that as a consequence we have
\[ \mathrm{Fut}_{(1-t)\alpha, v}(M, w) = \mathrm{Fut}_v(M,w)
  + \frac{1-t}{V}\int_W (\max_W \theta_w - \theta_w)
  e^{\theta_v}\,\omega_\phi^n. \]
In particular the difference is always positive, and is equal to zero
only if $\theta_w$ is constant on $W$, i.e. if we had a trivial
degeneration. Note also that the right hand side is independent of the
choice of metric $\alpha$ on $M$, however as discussed in
\cite{Sz12_1} (and can be seen from the proof below),
if one replaces $\alpha$ by the current of integration
along a divisor, leading to the notion of log K-stability used in
\cite{CDS13_3}, the twisted Futaki invariant might drop for special
divisors. 

For the proof of Proposition~\ref{prop:thetaform}, and also for later
use we will need to represent $\alpha$ as an
integral of currents of integration along divisors on $M$. The
formula \eqref{eq:c1} is invariant under scaling $\omega_\phi$ and
$\omega_\psi$, and so to simplify notation we will assume that the
cohomology classes $[\alpha], [\omega_\phi]$ coincide with the
classes of the hyperplane divisors $M\cap H, W\cap H$. In particular
we then have $V=1$.  We will also normalize the
Fubini-Study metric $\omega_{FS}$ on $\mathbf{P}^N$ to represent the
same cohomology class as $[H]$.

Let us write $\mathbf{P}^{N*}$ for the
dual projective space of hyperplanes. Since $\alpha$ is the
restriction of $\omega_{FS}$ to $M$, we have (see
e.g. Shiffman-Zelditch~\cite{SZ99}) 
\[ \alpha = \int_{\mathbf{P}^{N^*}} [M\cap H]\, d\mu(H), \]
where $d\mu$ is simply the Fubini-Study volume form, scaled to have
volume $1$. It follows that the limit $\beta = \lim_{t\to 0}
\lambda(t)_*\alpha$ is given by
\[ \beta = \int_{\mathbf{P}^{N^*}} [W \cap H_0]\, d\mu(H), \]
where for each hyperplane $H$ we wrote
\[ H_0 = \lim_{t\to 0} \lambda(t)\cdot H. \]
In this formula for the limit $\beta$ it is important that $W$ is not 
contained in a
hyperplane, otherwise we would not necessarily have the relation
\[  \lim_{t\to 0} \lambda(t)\cdot (M\cap H) = (\lim_{t\to 0}
\lambda(t)\cdot M) \cap (\lim_{t\to 0}\lambda(t)\cdot H),\]
used above.  It follows that 
\[ \label{eq:4} \int_{W} \theta_w \beta \wedge \omega_{FS}^{n-1} =
\int_{\mathbf{P}^{N^*}} \int_{W\cap H_0} \theta_w\,\omega_{FS}^{n-1}\,d\mu(H). \]
A key point is that there is a subspace $P_w\subset \mathbf{P}^{N^*}$,
depending on $w$, such that for all $H\not\in P_w$ the integral 
\[ \int_{W\cap H_0} \theta_w\,\omega_{FS}^{n-1} \]
has the same value. The following lemma gives a formula for this
integral, and in particular shows this independence. This formula is
essentially contained in \cite[proof of Theorem 12]{LSz13}, and was
made more explicit by Dervan~\cite{Der14}. 

\begin{lem}\label{lem:inttheta}
  Let us normalize the Fubini-Study metric  so
  that $[\omega_{FS}] = [H]$ in $H^2(\mathbf{P}^N)$. Then there is a
  subspace $P_w\subset \mathbf{P}^{N^*}$ such that for $H\not\in P_w$ we have
\[ \int_{W \cap H_0} \theta_w\,\omega_{FS}^{n-1} = \frac{1}{n}\int_W \Big[(n+1)
\theta_w - \max_W \theta_w\Big]\,\omega_{FS}^n. \]
\end{lem}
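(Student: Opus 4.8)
The plan is to compute the integral $\int_{W\cap H_0}\theta_w\,\omega_{FS}^{n-1}$ by degenerating $W\cap H$ inside $W$ via the $\mathbf{C}^*$-action $\lambda$ and applying a standard equivariant Riemann-Roch/equidistribution computation. First I would note that for a generic hyperplane $H$, the limit $H_0 = \lim_{t\to 0}\lambda(t)\cdot H$ is again a hyperplane, and since $W$ is not contained in any hyperplane the divisor $W\cap H_0$ is a genuine (reduced, if $H$ is generic) degree-one hypersurface section of $W$. The subspace $P_w\subset\mathbf{P}^{N*}$ is precisely the locus of ``bad'' hyperplanes: those whose limit $H_0$ is not transverse to $W$, or more precisely those lying in a proper $\lambda$-invariant subspace, so that for $H\notin P_w$ the limiting sections $H_0$ all lie in a single $\lambda$-weight space (the extremal one) up to the action of the centralizer, and hence $W\cap H_0$ represents a fixed homology class on which $\theta_w$ restricts in a uniform way.

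Next, the key identity to establish is an \emph{integration-by-parts / moment map} relation on $W$. Writing $\theta_w$ for the Hamiltonian of $w$ with respect to $\omega_{FS}$ (normalized so $\max_W\theta_w$ is the top weight), one has for the divisor $D_0 = W\cap H_0$ the relation
\[
\int_{D_0}\theta_w\,\omega_{FS}^{n-1} = \int_W \theta_w\,\omega_{FS}^n + \frac{1}{n}\int_W (L_w\text{-term})\,\omega_{FS}^n,
\]
which one can make precise using that $[D_0]$ differs from $\omega_{FS}|_W$ by a $\ddbar$-exact current whose potential is controlled by $\theta_w$ (this is the content of $D_0$ being the limit under the flow generated by $w$). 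Concretely, if $s$ is the section cutting out $H$ and $s_0$ its limit, then $\log|s_0|^2_{h_{FS}}$ is, up to a smooth function, $\lim_t (\log|s|^2\circ\lambda(t)) $, and the derivative of this family in $t$ at $t=0$ picks out $\theta_w - \max_W\theta_w$ (the shift by the maximum coming from the normalization that the limit sits in the top weight space). Pairing against $\omega_{FS}^{n-1}$ and using Stokes' theorem converts $\int_{D_0}\theta_w\,\omega_{FS}^{n-1} - \int_W \theta_w\,\omega_{FS}^n$ into $\frac{1}{n}\int_W(\theta_w - \max_W\theta_w)\,\omega_{FS}^n$ plus a correction; rearranging yields exactly the claimed formula $\frac{1}{n}\int_W[(n+1)\theta_w - \max_W\theta_w]\,\omega_{FS}^n$. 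I would cross-check the numerical coefficients against the case $W=\mathbf{P}^n$ with a linear $\mathbf{C}^*$-action, where both sides can be computed by hand, and against the formula already recorded for $\mathrm{Fut}$ in the paper.

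The main obstacle I anticipate is making rigorous the claim that $\int_{D_0}\theta_w\,\omega_{FS}^{n-1}$ is independent of $H\notin P_w$, and identifying $P_w$ correctly: a priori the limit divisors $W\cap H_0$ could land in various $\lambda$-weight subspaces depending on where $H$ sits relative to the weight decomposition of $\mathbf{C}^{N+1}$, and only for $H$ avoiding a proper subspace (those $H$ whose restriction to $W$ has nonzero component in the top-weight part) does $W\cap H_0$ stabilize to the ``generic'' limit. I would handle this by decomposing the space of sections $H^0(W,\mathcal{O}(1))$ under $\lambda$, observing that the flow $\lambda(t)\cdot s$ converges projectively to the top-weight component of $s$, and letting $P_w$ be the projectivization of the kernel of projection onto that top component. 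Since this is a proper linear subspace, its complement is Zariski-dense, and on it the limit $W\cap H_0$ is a fixed translate (under the centralizer group, which preserves $\theta_w$) of a single divisor, giving the asserted independence. The remaining work — the Stokes' theorem computation and bookkeeping of weights — is routine, following \cite[proof of Theorem 12]{LSz13} and \cite{Der14}, and poses no conceptual difficulty.
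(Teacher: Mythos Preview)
Your identification of $P_w$ is correct and matches the paper's: it is the set of hyperplanes whose defining linear form has zero component in the top $\lambda$-weight space, so that for $H\notin P_w$ the limit $H_0$ is always $\{Z_{\max}=0\}$. From this point on, however, your route diverges from the paper's. The paper gives a purely algebraic argument: writing $R=\bigoplus R_k$ for the homogeneous coordinate ring of $W$ and $S=R/Z_{\max}R$ for that of $W\cap H_0$, one has $S_k=R_k/Z_{\max}R_{k-1}$, hence the total weight on $S_k$ satisfies $w'_k=w_k-w_{k-1}-\mu_{\max}\dim R_{k-1}$. Equivariant Riemann--Roch expresses the leading coefficients of $w_k$, $w'_k$, $\dim R_k$ as the integrals $\int_W\theta_w\,\omega_{FS}^n$, $\int_{W\cap H_0}\theta_w\,\omega_{FS}^{n-1}$, $\int_W\omega_{FS}^n$, and comparing the $k^n$-terms in the displayed identity gives the formula directly.

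Your proposed route via Poincar\'e--Lelong and Stokes is in principle workable---writing $[D_0]=\omega_{FS}|_W+\ddbar\log|s_0|^2_{h_{FS}}$ and integrating by parts does reduce the difference $\int_{D_0}\theta_w\,\omega_{FS}^{n-1}-\int_W\theta_w\,\omega_{FS}^n$ to an expression involving $w$ acting on $\log|s_0|^2_{h_{FS}}$, and since $s_0$ is a weight-$\mu_{\max}$ eigenvector this derivative is exactly $\theta_w-\mu_{\max}$ (up to sign conventions). But your write-up leaves this step vague (``plus a correction; rearranging yields\ldots''), and you would need to justify the integration by parts carefully: $\log|s_0|^2_h$ is singular along $D_0$ and $W$ is in general a singular $\mathbf{Q}$-Fano variety, so the manipulation is not automatic. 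The paper's Hilbert-series argument sidesteps these analytic issues entirely and is both shorter and more robust; your approach, once made precise, would be a valid alternative, closer in spirit to equivariant localization than to the asymptotic-Riemann--Roch method actually used.
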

\begin{proof}
  Let us write $R = \bigoplus R_k$ for the graded coordinate ring of
  $W$. In suitable homogeneous coordinates the function $\theta_w$ on
  $\mathbf{P}^N$  is given by
  \[ \theta_w(Z) = \frac{ \sum_i \mu_i |Z_i|^2}{\sum_i |Z_i|^2}, \]
  where the $\mu_i$ are the weights of the $\mathbf{C}^*$-action
  $\lambda(t)$ induced by $\theta_w$, on the linear functions $R_1$. For a
  generic hyperplane $H$, the limit $H_0 = \lim_{t\to
    0}\lambda(t)\cdot H$ has equation $Z_{max} = 0$, where $\mu_{max}$
  is the largest weight (if there are several equal largest weights,
  then $Z_{max}$ can denote any of the corresponding
  coordinates). Indeed this is the case for all hyperplanes not
  passing through the set where $\theta_w$ achieves its maximum.
  This can be seen from the fact that the effect of
  acting by $\lambda(t)$ as $t\to 0$ is the same as flowing along the
  negative gradient flow of $\theta_w$.

  Denoting by $S =\bigoplus S_k$ the graded coordinate ring of $W\cap
  H_0$, we have $S = R / Z_{max}R$, i.e. $S_k = R_k /
  Z_{max}R_{k-1}$. Let us write $w_k$ for the total weight of the
  action $\lambda$ on $R_k$, and $w_k'$ for the weight of the action
  on $S_k$. From the equivariant Riemann-Roch theorem we have
  \[ \dim R_k = k^n \int_W \frac{\omega_{FS}^n}{n!} + O(k^{n-1}), \]
  and 
  \[ w_k = k^{n+1} \int_W \theta_w\,\frac{\omega_{FS}^n}{n!} + c k^n +
  O(k^{n-1}) \]
  for some constant $c$.  Similarly
  \[ \label{eq:3} w_k' = k^n \int_{W\cap H_0} \theta_w\, \frac{\omega_{FS}^{n-1}}{(n-1)!}
  + O(k^{n-1}). \]
  From the description $S_k = R_k / Z_{max}R_{k-1}$ we get
  \[ \begin{aligned}
    w_k' &= w_k - w_{k-1} - \mu_{max} \dim R_{k-1} \\
    &= (n+1)k^n \int_{W} \theta_w\,\frac{\omega_{FS}^n}{n!} - \mu_{max} k^n
    \int_W \frac{\omega_{FS}^n}{n!}.
  \end{aligned}\]
  Combining this with \eqref{eq:3} we get
  \[ \int_{W\cap H_0} u\,\omega_{FS}^{n-1} =
  \frac{1}{n} \int_W \Big[ (n+1)\theta_w - \mu_{\max}\Big]\,\omega_{FS}^n. \]
  The fact that $W$ is invariant under the action of $\lambda(t)$ and
  not contained in a hyperplane
  implies that $\max_W u = \mu_{max} = \max_{\mathbf{P}^N} u$. 
\end{proof}

Proposition~\ref{prop:thetaform} follows from this lemma together with
the formula \eqref{eq:4}. Indeed, the lemma together with \eqref{eq:4}
implies that
\[ \label{eq:c2} \int_W \theta_w\,\beta \wedge \omega_{FS}^{n-1} = \frac{1}{n}
\int_W \Big[ (n+1)\theta_w - \max_W \theta_w\Big]\,\omega_{FS}^n, \]
since the set of hyperplanes in $P_w$ has measure zero. At the same
time, in \eqref{eq:c1} we can replace $\omega_\phi$ with the
restriction of $\omega_{FS}$ to $W$. Note that this
will change the function $\theta_w$, but the difference of the two
sides of \eqref{eq:c1} remains the same. The formula \eqref{eq:c1} in
Proposition~\ref{prop:thetaform} then follows immediately from \eqref{eq:c2}.

\section{Proof of the main result} \label{sec:mainthm}
In this section we give the proof of our main result,
Proposition~\ref{prop:main}. The setup is that we have a smooth Fano
manifold $M$ with the holomorphic action of a compact group $G$. We
have a $G$-invariant K\"ahler metric $\alpha\in c_1(M)$, and for
simplicity we assume that $\alpha$ is the restriction of
$\frac{1}{m}\omega_{FS}$ to $M$, under an embedding $M\subset
\mathbf{P}^{N_m}$ using a basis of sections of $K_M^{-m}$, for some $m
> 0$. We are also given a vector field $v$ on $M$, invariant under the
action of $G$. In order to find a K\"ahler-Ricci soliton on $(M,v)$ we
try to solve the equations 
\[\label{eq:KRScont} \mathrm{Ric}(\omega_t) - L_v \omega_t = t\omega_t + (1-t)\alpha,\]
for $t\in [0,1]$. From Zhu~\cite{Zhu00} we know that there is a
solution for $t=0$ and by Tian-Zhu~\cite{TZ00} the possible values of
$t$ form an open set. We therefore have a solution for $t\in [0,T)$ and we need
to understand the limit of a sequence of solutions as $t\to T$. 

\subsection{The case $T < 1$. }
We first focus on the case $T < 1$, and we assume that the triple
$(M,(1-s)\psi_\alpha, v)$ is K-stable with respect to $G$-equivariant
special degenerations, for some $s\in (T,1]$,  We show
that in this case the continuity method cannot blow up at time
$T$, i.e. we can solve our equation for $t=T$ as well. 
The strategy is the same as that in \cite{CDS13_3}. 

We first show that along
a sequence $t_k \to T$, the Gromov-Hausdorff limit of $(M,
\omega_{t_k})$ has the structure of a $\mathbf{Q}$-Fano variety $W$,
together with a metric $\psi$ on $K_W$, and a vector field $v$ such
that the triple $(W, (1-T)\psi, v)$ admits a twisted K\"ahler-Ricci
soliton. We then need to show that $W$ is the central fiber of a
special degeneration for $M$.  
One difficulty, when comparing this to the analogous result in
\cite{CDS13_3}, is that we are not able to show that the pair $(W, (1-T)\psi)$
is the central fiber of a special degeneration for $(M,(1-T)\psi_\alpha)$ since
we are not able to use the Luna slice theorem on the infinite
dimensional space of pairs consisting of a variety and a positive
current. Instead we use an argument approximating $\alpha$ with a
convex combination of hyperplane sections. 

The key ingredient to understanding the Gromov-Hausdorff limit of a
sequence $(M, \omega_{t_k})$ is the partial $C^0$-estimate, first
introduced by Tian~\cite{Tian90}. This was
established in \cite{Sz13_1} in the case when $v=0$, using the method in
Chen-Donaldson-Sun~\cite{CDS13_2}, and it was shown by 
Phong-Song-Sturm~\cite{PSS12} for
K\"ahler-Ricci solitons (i.e. $v$ is non-zero, but $t=1$),
generalizing the work of Donaldson-Sun~\cite{DS12}. A modest
combination and generalization of
these ideas gives the analogous result for the equation
\eqref{eq:KRScont}, and we will give a brief outline of the necessary
changes in Section~\ref{sec:partialC0}. 

For each $t$, the metric $\omega_t$ introduces Hermitian inner products
on $H^0(K_M^{-m})$ for all $m > 0$, moreover these inner products are
$G$-invariant (by the uniqueness
of solutions to \eqref{eq:KRScont} for $t < 1$). 
The partial $C^0$-estimate says that we can find a
uniform $m$, and $\kappa > 0$, independent of $t$, such that an
orthonormal basis $\{s_0,\ldots, s_{N_{m}}\}$ of $H^0(K_M^{-m})$ satisfies
\[ \kappa < \sum_{i=0}^{N_{m}} |s_i|^2(x) < \kappa^{-1} \]
for all $x\in M$. Let us write $N=N_m$ for this choice of $m$ from now. 

Let us now write $V_t = H^0(K_M^{-m})$ for the unitary
$G$-representation, with metric induced by $\omega_t$. Note that $V_t$
are equivalent $G$-representations, and hence they are unitarily
equivalent as well. It follows that we have $G$-equivariant unitary
maps $f_t : V_0\to V_t$. In other words 
if we pick an orthonormal basis $\{s_0,\ldots, s_N\}$ for
$H^0(K_M^{-m})$ with respect to the metric $\omega_0$, then for all $t
> 0$ we can find an orthonormal basis $\{ s_0^{(t)},\ldots,
s_N^{(t)}\}$ with respect to $\omega_t$, by applying the map $f_t$.
Using these bases, we have embeddings $F_t : M \to
\mathbf{P}^{N}$, such that for $s\ne t$ we have $F_s = \rho\circ
F_t$ with $\rho\in GL(N+1)^G$, i.e. $\rho$ commutes with $G$. In
particular the vector field $(F_t)_*v$ along the image $F_t(M)$ is
induced by a fixed holomorphic vector field $v$ on $\mathbf{P}^N$,
since $v$ is $G$-invariant. 

We can choose a subsequence $t_k\to T$, such that $F_{t_k}(M)$ converges to
a limit $W\subset \mathbf{P}^N$, and as shown in
Donaldson-Sun~\cite{DS12}, the partial $C^0$-estimate implies, up to
replacing $m$ by a multiple, 
that $W$ is a normal $\mathbf{Q}$-Fano variety, homeomorphic to the Gromov-Hausdorff
limit $Z$ of the sequence $(M, \omega_{t_k})$. Moreover the maps
$F_{t_k}:M \to \mathbf{P}^N$ converge to a Lipschitz map
$F_T:Z\to \mathbf{P}^N$ under this Gromov-Hausdroff convergence,
such that $F_T:Z \to W$ is a homeomorphism.  Note that by choosing a
further subsequence we can assume that the currents
$(F_{t_k})_*\alpha$ converge weakly to a current $\beta$, which is
necessarily supported on $W$ and is invariant under the action of
$\mathrm{Im}\,v$. Let us write $\beta$ as the curvature $\omega_\psi$
of a singular metric $e^{-\psi}$ on $K_W^{-1}$. We can similarly
define a weak limit $\omega_T$ of the metrics
$(F_{t_k})_*(\omega_{t_k})$, which is also supported on $W$. Note that
if we write
\[ \omega_{t_k} = \frac{1}{m} (F_{t_k})^*\omega_{FS} + \ddbar
\phi_k, \]
then the partial $C^0$-estimate implies that we have  bounds
$|\phi_k|, |\nabla \phi_k|_{\omega_{t_k}} < C$. This in particular
implies that the $\phi_k$ converge to a Lipschitz function $\phi_T$ on
$(Z, d_Z)$, and since $Z$ is homeomorphic to $W$, this means that
$\phi_T$ is continuous on $W$ (using the topology induced from
$\mathbf{P}^N$). This implies that $\omega_T$ is the curvature of a
continuous metric $e^{-\phi_T}$ on $K_W^{-1}$ (recall that we might need
to take a power $K_W^{-m}$ here). 
We need the following.

\begin{prop}\label{prop:limiteq}
  The triple $(W, (1-T)\psi, v)$ admits a twisted
  K\"ahler-Ricci soliton, and in particular $(W, (1-T)\psi)$ has $klt$
  singularities. In fact the twisted K\"ahler-Ricci soliton is given
  by the metric $e^{-\phi_T}$. 
\end{prop}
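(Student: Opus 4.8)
The plan is to pass to the limit in the twisted soliton equation \eqref{eq:soleq}, using the partial $C^0$-estimate to control all the terms. Concretely, along the continuity method the metrics $\omega_{t_k}$ satisfy $\mathrm{Ric}(\omega_{t_k}) - L_v\omega_{t_k} = t_k\omega_{t_k} + (1-t_k)\alpha$, which by Remark~\ref{rem:smoothsoliton} is equivalent on $M$ to the Monge--Amp\`ere type equation $e^{-t_k\phi_k - (1-t_k)\psi_\alpha} = e^{\theta_v^{(k)}}\omega_{t_k}^n$, up to normalizing constants; here $\phi_k$ is the K\"ahler potential relative to $\frac 1m(F_{t_k})^*\omega_{FS}$ and $\theta_v^{(k)}$ is the Hamiltonian of $v$ with respect to $\omega_{t_k}$, normalized so that $e^{\theta_v^{(k)}}$ has average $1$. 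First I would push everything forward by $F_{t_k}$ so that all objects live on a fixed $\mathbf{P}^N$ and the varieties $F_{t_k}(M)$ converge to $W$. The partial $C^0$-estimate gives uniform $C^0$ and Lipschitz bounds on $\phi_k$, so (as already observed in the text) $\phi_k \to \phi_T$ uniformly and $\phi_T$ is a continuous metric on $K_W^{-1}$; the currents $(F_{t_k})_*\alpha \to \beta = \omega_\psi$ weakly, and $t_k \to T$.

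The core analytic step is to show that the right-hand side measures $e^{\theta_v^{(k)}}\omega_{t_k}^n$ converge weakly to $e^{\theta_v}\omega_{\phi_T}^n$ on $W_0$, and the left-hand side measures $e^{-t_k\phi_k-(1-t_k)\psi_\alpha}$ converge weakly to $e^{-T\phi_T - (1-T)\psi}$; equality of the limits then gives \eqref{eq:soleq} on $W$. For the right-hand side I would work on the regular part $W_0$: there the convergence $F_{t_k}(M)\to W$ is smooth (by Donaldson--Sun the convergence is smooth away from a closed singular set of real codimension at least $4$), $v$ is a fixed holomorphic vector field on $\mathbf{P}^N$ tangent to $W_0$, the Hamiltonians $\theta_v^{(k)}$ converge to $\theta_v$ for $\omega_{\phi_T}$ (after checking the normalizations pass to the limit, using \eqref{eq:aaa3} and the volume normalization \eqref{eq:aa1}), and $\omega_{t_k}^n \to \omega_{\phi_T}^n$ as measures by Bedford--Taylor continuity for uniformly bounded, locally uniformly convergent potentials. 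Since the singular set has zero measure for all the relevant measures (the $klt$ property of the limit, once established, and the $C^0$-bounds ensure no mass concentrates there), the weak limit on $W_0$ extends trivially to $W$. For the left-hand side, $\psi_\alpha$ pushes forward to a metric whose curvature limits to $\beta$, and the $klt$ statement for $(W,(1-T)\psi)$ is then exactly the assertion that this limiting measure $e^{-T\phi_T - (1-T)\psi}$ is finite, which follows a posteriori from \eqref{eq:soleq} together with \eqref{eq:aaa3} and the finiteness of $\int_W \omega_{\phi_T}^n = V$.

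Having matched the two measures on $W_0$, I would invoke Remark~\ref{rem:smoothsoliton}: on the smooth locus $W_0$ the identity $e^{-T\phi_T-(1-T)\psi} = e^{\theta_v}\omega_{\phi_T}^n$ is precisely the smooth twisted soliton equation, and since $\phi_T$ and $\psi$ are (pluri)potentials in $L^1_{loc}$ with $\phi_T$ continuous and the singular set $W\setminus W_0$ has vanishing $(2n-2)$-Hausdorff measure, the extension argument there (via Harvey--Polking, Demailly, and the $\ddbar$-closed $L^1$-function argument) upgrades this to a genuine twisted K\"ahler-Ricci soliton on all of $(W,(1-T)\psi,v)$ in the sense of Definition~\ref{defn:tKRS}. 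That $e^{-\phi_T}$ is the soliton, rather than merely some soliton existing, is built into the construction.

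I expect the main obstacle to be the convergence of the right-hand side measures across the singular set, i.e. controlling $\theta_v^{(k)}\omega_{t_k}^n$ near $W_{\mathrm{sing}}$ and ensuring the normalizations of $\theta_v^{(k)}$ (which depend on $\omega_{t_k}$) pass cleanly to the limit. The point is that $\theta_v^{(k)}$ is the Hamiltonian of a \emph{fixed} ambient vector field $v$ on $\mathbf{P}^N$ restricted to $F_{t_k}(M)$, so it is given by an explicit uniformly bounded expression $\frac{\sum \mu_i|Z_i|^2}{\sum|Z_i|^2}$ plus a normalizing constant $c_k$; the constant $c_k$ is pinned down by $\int_{F_{t_k}(M)} e^{\theta_v^{(k)}}\omega_{t_k}^n = V$, and using \eqref{eq:aaa3} and the weak convergence of $\omega_{t_k}^n$ together with dominated convergence on $W_0$ (plus the fact that the limiting measures give no mass to $W_{\mathrm{sing}}$) one sees $c_k \to c_\infty$, identifying the limiting normalization with the one in \eqref{eq:aa1} for $\omega_{\phi_T}$. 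Once this bookkeeping is in place, the rest is a standard Bedford--Taylor stability argument on $W_0$ followed by the codimension-$\geq 2$ extension of Remark~\ref{rem:smoothsoliton}.
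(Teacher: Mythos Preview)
Your outline has a genuine gap: you assert that on $W_0$ the convergence $F_{t_k}(M)\to W$ is smooth, citing Donaldson--Sun. That is false in the twisted setting $T<1$. The Donaldson--Sun codimension-$4$ result requires a two-sided Ricci bound, whereas here $\mathrm{Ric}(\omega_{t_k}) = t_k\omega_{t_k} + (1-t_k)\alpha + L_v\omega_{t_k}$ and $\alpha$ need not be bounded by $\omega_{t_k}$. In fact the Gromov--Hausdorff limit decomposes as $Z = \mathcal{R}\cup\mathcal{D}\cup\mathcal{S}_2$, where $\mathcal{D}$ is the (possibly nonempty, real codimension $2$) set of points whose tangent cone is $\mathbf{C}^{n-1}\times\mathbf{C}_\gamma$. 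Points of $\mathcal{D}$ lie in $W_0$ (the variety is smooth there), but the metrics $\omega_{t_k}$ do \emph{not} converge smoothly there; they are only close to a conical model. So your Bedford--Taylor argument on $W_0$, and in particular the claimed convergence of $\theta_v^{(k)} = \theta_v^{FS} + v(\phi_k)$ (which needs control of derivatives of $\phi_k$) and of the left-hand side potentials, is not justified on $\mathcal{D}$.

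The paper's proof confronts exactly this. It works on $W' = W_0\setminus F_T(\mathcal{S}_2)$, and at a point $p\in\mathcal{D}$ uses the good-tangent-cone technology from \cite{CDS13_2,Sz13_1} to produce local biholomorphisms $H_k:\Omega_k\to B^{2n}$ in which $\widetilde{\omega}_k$ satisfies the quantitative conditions $(1)$--$(3)$: bounded potential, bounded $v_k(\phi_k)$, $\omega_{Euc}<C\widetilde{\omega}_k$, and $C^{1,\alpha}$-closeness to $\eta_\gamma$ away from $\{u=0\}$. The crucial step you are missing is that the \emph{equation itself}, written in these charts as $e^{r^2v_k(\phi_k)}(\ddbar\phi_k)^n = e^{-r^2t_k\phi_k-(1-t_k)\psi_k}\omega_{Euc}^n$, is what gives an upper bound on $\psi_k$ (from $\omega_{Euc}<C\widetilde{\omega}_k$ and the other bounds), and a lower bound on compact sets off $\{u=0\}$. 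Only then do you get $\psi_k\to\psi_\infty$ in $L^1_{loc}$, hence $\beta=\ddbar\psi_\infty$ and the limiting curvature identity \eqref{eq:aa4}, after which Remark~\ref{rem:smoothsoliton} finishes as you say. Without this local analysis across $\mathcal{D}$, neither the identification of the weak limit of the volume forms $e^{-(1-t_k)\psi_k}$ with $e^{-(1-T)\psi}$ nor the convergence of the Hamiltonians is available.
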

\begin{proof}
  Let us decompose the Gromov-Hausdroff limit as $Z = \mathcal{R}\cup
  \mathcal{D} \cup \mathcal{S}_2$. Here $\mathcal{R}$ is the regular
  set, and $\mathcal{D}$ is the set of points which admit a tangent
  cone of the form $\mathbf{C}^{n-1}\times \mathbf{C}_\gamma$, where
  $\mathbf{C}_\gamma$ is the standard cone with cone angle
  $2\pi\gamma$. See Section~\ref{sec:partialC0} for more details. 
  From the results of Cheeger-Colding~\cite{CC97} and
  Cheeger-Colding-Tian~\cite{CCT02} we know that $S_2$ is a closed set of
  Hausdorff dimension at most $2n-4$. Since $F_T$ is Lipschitz, we
  know that $F_T(S_2)$ is also a closed set with Hausdorff dimension at most
  $2n-4$. Let us write $W' = W_0 \setminus F_T(S_2)$, where as before
  $W_0$ is the regular part of the algebraic variety $W$. We will
  construct the twisted K\"ahler-Ricci soliton on $W'$. As explained
  in Remark~\ref{rem:smoothsoliton}, it is enough to show that the measure
  $e^{\theta_v}\omega_T^n$ corresponding to the metric $e^{-\phi_T}$
  defines a singular metric $e^{-\tau}$ on $K_{W'}$ with $\tau\in
  L^1_{loc}$ such that its curvature satisfies
\[ \label{eq:aa4} \omega_\tau = T\omega_T + (1-T)\psi. \] 

  To simplify notation we will identify $Z$ with $W$, and so on $W$ in addition to
  the metric $\omega_{FS}$ induced by the Fubini-Study metric we have
  the metric $d_Z$ inducing the same topology. For simplicity let us also write $d_k$ for the
  metric on $M$ induced by $\omega_{t_k}$, and $M_k$ for the metric
  space $(M, d_k)$. Thus we have $M_k \to
  (W, d_Z)$ in the Gromov-Hausdorff sense. The maps $F_k :
  M_k\to\mathbf{P}^N$ are compatible with the convergence in the sense
  that if $p_k \to p$ with $p_k \in M_k$ and $p \in W$, then $F_k(p_k)
  \to p$ in $\mathbf{P}^N$. 

  If $p\in W'$, then either $p\in \mathcal{R}$ or
  $p\in \mathcal{D}$. We will only deal with the case $p\in
  \mathcal{D}$ since the other case is easier. We can
  write $p = \lim p_k$ for $p_k\in M_k$, such that for a sufficiently
  small $r > 0$ the balls $B_{d_k}(p_k, r)$, scaled to unit
  size are very close in the Gromov-Hausdorff sense to the unit ball
  in a cone $\mathbf{C}^{n-1}\times \mathbf{C}_\gamma$, for large $k$.
  As discussed in \cite{Sz13_1}, based on the ideas in \cite{CDS13_2},
  this implies that we have biholomorphisms $H_k : \Omega_k \to
  B^{2n}$, where $\Omega_k\subset M_k$ contain a ball around $p_k$ of
  a fixed size, such that the metric $\widetilde{\omega}_k = r^{-2}\omega_{t_k}$ on
  $B^{2n}$ is well approximated by the standard conical metric on
  $B^{2n}$. More precisely, we have coordinates $(u, v_1,\ldots,
  v_{n-1})$ such that if we write
  \[ \eta_\gamma = \sqrt{-1}\frac{du \wedge d\bar u}{|u|^{2-2\gamma}}
  + \sqrt{-1} \sum_{i=1}^{n-1} dv_i \wedge d\bar{v}_i, \]
  then for some fixed constant $C$ (independent of $k$)
  \begin{enumerate}
    \item $\widetilde{\omega}_k = \ddbar \phi_k$ with $0\leq \phi_k\leq
      C$,     $| r^2 v_k(\phi_k)| < C$, where $v_k$ is the soliton vector
      field in this chart.
    \item $\omega_{Euc} < C\widetilde{\omega}_k$,
    \item Given any $\delta > 0$ and compact set $K$ away from
      $\{u=0\}$, we can assume (by taking $r$ above smaller and $k$
      larger if necessary), that $|\widetilde{\omega}_k -
      \eta_\gamma|_{C^{1,\alpha}} < \delta$ on $K$.
  \end{enumerate}
We will also write $\alpha_k$ for the form $\alpha$ in this chart. 

Is is shown in
\cite[Proposition 22]{CDS13_3}, the
biholomorphisms $H_k:\Omega_k \to B^{2n}$ converge to a
homeomorphism $H_\infty : \Omega_\infty \to B^{2n}$, and necessarily
$\Omega_\infty$ contains a ball $B_{d_Z}(p, \epsilon) \subset W$ for some
small $\epsilon > 0$, since all the sets $\Omega_k$ contain balls of a
uniform size around $p_k$. It follows that $\Omega_\infty$ also
contains a ball $B$ around $p$ in the topology on $W$ induced from
$\mathbf{P}^N$, and so $H_\infty$ defines a holomorphic chart on $W$
in a neighborhood of $p$. These charts can be used to define
holomorphic maps $f_{t_k} : B \to F_{t_k}(M)$, biholomorphic onto
their image, such that the $f_{t_k}$ converge to the identity map as
$k\to\infty$. In this formulation $\beta$ is given as the weak limit
of $f_{t_k}^* (F_{t_k})_*\alpha$, which in terms of our charts amounts
to saying that $\beta$ is the weak limit of the forms $\alpha_k$. In
the same vein $\omega_T$ is the weak limit of $\omega_{t_k} =
r^2\widetilde{\omega}_k$. 

  The metrics $\widetilde{\omega}_k$ satisfy the equations 
  \[ \label{eq:d1}
e^{r^{2} v_k(\phi_k)} (\ddbar \phi_k)^n = e^{-r^2t_k\phi_k- (1-t_k)\psi_k}
  \omega_{Euc}^n, \]
  for suitable local potentials $\psi_k$ of the forms $\alpha_k$
  restricted to this chart. Note that $r^{2} v_k(\phi_k)$ is a Hamiltonian
  for $v_k$ with respect to $\omega_{t_k}$. The
  bound $\omega_{Euc} < C\ddbar\phi_k$ together with $|\phi_k|, |r^2v_k(\phi_k)| < C$
  and \eqref{eq:d1}  implies an upper bound for $\psi_k$ (note that
  $t_k$ is bounded away from $1$). In addition
  since we control $\widetilde{\omega}_k$ on compact sets away from
  $\{u=0\}$, on any such set we have a lower bound for $\psi_k$ as
  well. It follows that up to choosing a further subsequence, we have
  $\psi_k \to \psi_\infty$ in $L^1_{loc}$, for some plurisubharmonic
  $\psi_\infty$, and then necessarily $\beta = \ddbar \psi_\infty$. In
  addition we can assume that $r^2t_k\phi_k$ converge uniformly to
  $T\phi_\infty$ for a continuous $\phi_\infty$ such that $\omega_T =
  \ddbar\phi_\infty$. 
It follows that
\[ -r^2v_k(\phi_k) - \log \frac{(\ddbar \phi_k)^n}{\omega_{Euc}^n} =
r^2t_k\phi_k + (1-t_k)\psi_k \]
are plurisubharmonic functions converging in $L^1_{loc}$. The bound on
$v(\phi_k)$ implies then that the limit $\omega_T^n$ gives a singular
metric on $K_{B^{2n}}$ with locally integrable potential, and
therefore by \eqref{eq:aaa3}, we have that $e^{\theta_v}\omega_T^n$
also defines such a singular metric $e^{-\tau}$. The convergence above
then shows that \eqref{eq:aa4} holds, which is what we wanted to
show. 
\end{proof}

One important conclusion that we need to draw from this is that
according to Proposition~\ref{prop:reductive} the Lie algebra
$\mathfrak{g}_{W, \beta, v}$ is reductive. In addition the twisted
Futaki invariant vanishes, $\mathrm{Fut}_{(1-T)\beta, v}(W, w)=0$, for
any $w\in \mathfrak{g}_{W, \beta, v}$. 

Let us now identify $M$ with its image $F_0(M)\subset
\mathbf{P}^N$, and write $\alpha = (F_0)_*\alpha$. 
From the above discussion, 
for each $k$, we have $F_{t_k}(M) = \rho_k(M)$ for some $\rho_k\in
GL^G$, and $\rho_k(M) \to W$, $\rho_k(\alpha) \to \beta$.  
As before, we can write
\[ \alpha = \int_{\mathbf{P}^{N^*}} [M\cap H]\,d\mu(H), \]
since $\alpha$ is a scaling of the restriction of the Fubini-Study
metric. Note that
\[ \rho_k(\alpha) = \int_{\mathbf{P}^{N^*}} [\rho_k(M)\cap \rho_k(H)]\,d\mu(H), \]
and the following lemma implies that
we can choose a subsequence of the $\rho_k$, such that the limit 
\[ \rho_\infty(H) = \lim_{k\to\infty} \rho_k(H) \]
exists for all $H\in \mathbf{P}^{N^*}$. Note that we write
$\rho_\infty(H)$ just as a notation, rather than suggesting that an
automorphism $\rho_\infty$ of $\mathbf{P}^N$ exists. 

\begin{lem} Up to choosing a subsequence, we can assume that
  $\rho_k(H)$ converges for all $H\in \mathbf{P}^{N^*}$. 
\end{lem}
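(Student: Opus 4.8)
The plan is to reduce the claimed convergence of the hyperplane sections $\rho_k(H)$ to a compactness statement about the one-parameter behavior of the $\rho_k$ on lines in $\mathbf{P}^N$ (equivalently, codimension-one subspaces of $\mathbf{C}^{N+1}$, after identifying $\mathbf{P}^{N^*}$ with lines via a choice of coordinates). The key geometric input we have is that $\rho_k(M) \to W$ and $\rho_k(\alpha) \to \beta$ as currents, and that $W$ is not contained in any hyperplane. I would first recall the KN/Cartan decomposition $\rho_k = k_k a_k k_k'$ with $k_k, k_k' \in U(N+1)$ and $a_k$ diagonal with positive entries; after passing to a subsequence we may assume $k_k \to k_\infty$ and $k_k' \to k_\infty'$ in $U(N+1)$, so the only source of non-convergence is the diagonal part $a_k$, whose entries may blow up or go to zero.

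The heart of the argument is that the diagonal part $a_k$, written as $a_k = \lambda_k(t_k)$ for a suitable fixed one-parameter subgroup $\lambda_k$ (after a further subsequence we may assume the ordering of the weights stabilizes, so the $\lambda_k$ all lie along a single ray generated by one vector field $w$, with $t_k \to 0$), degenerates $M$ to $W$ — this is precisely the standard fact that the Gromov-Hausdorff / algebraic limit of $\rho_k(M)$ is the flat limit under the one-parameter group. The point is then that for a one-parameter $\mathbf{C}^*$-action $\lambda(t)$, and any hyperplane $H$, the limit $\lim_{t\to 0}\lambda(t)\cdot H$ exists: in coordinates diagonalizing the action, $H = \{\sum_i c_i Z_i = 0\}$ is sent to $\{\sum_i t^{\mu_i} c_i Z_i = 0\}$, and as $t\to 0$ the lowest-weight terms among those with $c_i \ne 0$ dominate, so after rescaling the defining linear form by the appropriate power of $t$ the equation converges. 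Applying this with $\lambda = \lambda_k$, and then composing with the convergent unitary factors $k_k, k_k'$, gives convergence of $\rho_k(H)$ for every fixed $H$. The one subtlety is that the rescaling power depends on $H$ (on which coordinates have nonzero coefficient), but for fixed $H$ and fixed $\lambda_k$-ray the relevant data is eventually constant in $k$, so the limit exists pointwise in $H$.

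More carefully, here is the order I would carry it out. First, pass to a subsequence so that the unitary factors in the Cartan decomposition converge, reducing to the diagonal case. Second, pass to a further subsequence so that the weight pattern of the diagonal factors (which weights are largest, their relative order, and which ratios of weights tend to finite limits versus infinity) stabilizes; this is a finite combinatorial choice, so finitely many subsequences suffice, and it lets us treat $a_k$ as $\lambda(t_k)$ for a single $\mathbf{C}^*$-action $\lambda$ generated by $w$, with $t_k \to 0$. Third, for this fixed $\lambda$, invoke (or prove by the coordinate computation above) that $\lim_{t\to 0}\lambda(t)\cdot H$ exists for every $H \in \mathbf{P}^{N^*}$. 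Fourth, combine: $\rho_k(H) = k_k \cdot \lambda(t_k) \cdot (k_k')^{-1}(H)$, and since $k_k, k_k'$ converge and $\lambda(t_k)\cdot ((k_k')^{-1}H)$ converges (the inner point $(k_k')^{-1}H$ converges to $(k_\infty')^{-1}H$, and the limit of $\lambda(t)$ acting is continuous in the point being acted on, away from the bad locus, and on the bad locus one argues directly), the composition converges. I expect the main obstacle to be the bookkeeping in the third and fourth steps: making sure that the limit $\lim_{t\to 0}\lambda(t)\cdot H$ genuinely exists for \emph{all} $H$ (not just generic $H$) and that composing with the converging — but themselves $H$-dependent in effect — unitary factors does not spoil this; the resolution is that the weight-ordering has been frozen by the subsequence choice so the "which coefficients survive" data is uniform, and the action of $\lambda(t)$ on the Grassmannian of hyperplanes, suitably compactified, is itself a one-parameter degeneration whose $t\to 0$ limit exists everywhere on the (compact) Grassmannian.
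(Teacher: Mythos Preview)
Your approach has two genuine gaps. First, the claim that after stabilizing the weight pattern the diagonal factors can be written as $\lambda(t_k)$ for a \emph{single} one-parameter subgroup $\lambda$ is false: knowing the ordering of the eigenvalues and which ratios blow up does not force the $a_k$ to lie along one ray in the torus (e.g.\ $a_k=\mathrm{diag}(e^{k^2},e^k,1)$ has all consecutive ratios $\to\infty$ but is not $\lambda(t_k)$ for any fixed $\lambda$). The assertion that ``the algebraic limit of $\rho_k(M)$ is the flat limit under the one-parameter group'' is not a standard fact here --- it is something the paper establishes only \emph{later}, via reductivity and a Luna--slice argument, and it plays no role in this lemma. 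Second, and more seriously, the composition step fails. You need $a_k\cdot k_k'(H)$ to converge, and you argue via $k_k'(H)\to k_\infty'(H)$ together with the pointwise existence of $\lim a_k\cdot(\,\cdot\,)$. But that limit map is discontinuous on its bad locus, and when $k_\infty'(H)$ lands there the outcome depends on the \emph{rate} at which $k_k'(H)$ approaches it relative to the degeneration of $a_k$. Concretely, with $a_k=\mathrm{diag}(k,1)$ and unitary $k_k'\to\mathrm{id}$ chosen so that $k_k'(H)=[k^{-c}:1]$ for a fixed $H=[0:1]$, one has $k_\infty'(H)=[0:1]$ independently of $c>0$, yet $a_k\,k_k'(H)=[k^{1-c}:1]$ tends to $[1:0]$, $[1:1]$, or $[0:1]$ according as $c<1$, $c=1$, or $c>1$. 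Your ``on the bad locus one argues directly'' is precisely the missing content; the KAK step has not reduced the problem.

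The paper's proof, by contrast, is a short piece of linear algebra using no geometric input about $M$, $W$, $\alpha$, or $\beta$. One regards the $\rho_k$ as linear maps on the vector space $V$ underlying $\mathbf{P}^{N^*}$, rescales each so that all matrix entries lie in the closed unit disk with at least one of modulus $1$, and passes to a subsequence converging entrywise to some (possibly singular) matrix $\rho$. For $x\notin\mathbf{P}(\ker\rho)$ one then has $\rho_k(x)\to\rho(x)$ in $\mathbf{P}(V)$; one restricts the $\rho_k$ to $\ker\rho$, renormalizes, and repeats. The kernel dimension strictly drops each time, so finitely many iterations suffice.
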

\begin{proof}
  Write $\mathbf{P}^{N^*} = \mathbf{P}(V)$ for an $N+1$-dimensional vector
  space $V$. 
  Thinking of the $\rho_k$ as matrices, let us scale each of them
  in such a way that all entries are in $\{z\,:\, |z|\leq 1\}$, and at
  least one entry equals 1. We can choose a subsequence such that as
  matrices, we have
  \[ \lim_k \rho_k = \rho, \]
  where $\rho$ is not necessarily invertible. Let $W_1 =
  \mathrm{Ker}\, \rho$. For any $x\in
  \mathbf{P}(V)\setminus\mathbf{P}(W_1)$ we can then take the limit
  \[ \lim_k \rho_k(x) = \rho(x). \]
  
  Now let us restrict the $\rho_k$ to $W$, thinking of them as
  linear maps $\rho_k: W_1\to V$. Once again, taking matrix
  representatives, we can normalize each to have entries in the unit
  disk, with at least one entry equal to 1. Just as above, up to
  choosing a further subsequence, we will have a limiting, nonzero linear map
  $\rho:W_1\to V$ with kernel $W_2\subset W_1$. For $x\in
  \mathbf{P}(W_1)\setminus \mathbf{P}(W_2)$ the limit will exist as
  above.

  Repeating this process a finite number of times we will have a
  subsequence $\rho_k$ such that $\rho_k(x)$ converges for
  all $x\in \mathbf{P}(V)$. 
\end{proof}

It follows that we have
\[ \label{eq:betaint}\beta = \int_{\mathbf{P}^{N^*}} [W\cap \rho_\infty(H)]\,d\mu(H), \]
where as before it is important to note that $W$ is irreducible and 
not contained in a hyperplane. 

In the spirit of Definition~\ref{defn:aut}, for any current $\tau$ on
$\mathbf{P}^N$, 
let us denote by
$\mathfrak{g}_{W, \tau} \subset \mathfrak{sl}(N+1,\mathbf{C})$ the space of
those holomorphic vector fields $v$, which are tangent to $W$ and satisfy
$\iota_v\tau = 0$. If $\tau = [S]$, the current of integration along a
subvariety $S$, we will write $\mathfrak{g}_S =
\mathfrak{g}_{[S]}$. Note that in this case $\mathfrak{g}_S$ is simply
the Lie algebra of the stabilizer of $S$ in $SL(N+1, \mathbf{C})$. 

\begin{lem} \label{lem:stabequal}
    We can find $H_1,\ldots, H_d$ for some $d$ such that
  \[\mathfrak{g}_{W, \beta} = \mathfrak{g}_W \cap 
\bigcap_{i=1}^d \mathfrak{g}_{[W\cap \rho_\infty(H_i)]}.\]
\end{lem}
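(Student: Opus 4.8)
The plan is to exploit the integral representation $\beta = \int_{\mathbf{P}^{N^*}} [W\cap\rho_\infty(H)]\,d\mu(H)$ from \eqref{eq:betaint} to reduce the condition $\iota_v\beta = 0$ to finitely many conditions of the form $\iota_v[W\cap\rho_\infty(H_i)] = 0$. The inclusion $\supseteq$ is the easy direction: a vector field $v$ tangent to $W$ with $\iota_v[W\cap\rho_\infty(H)] = 0$ for the relevant hyperplanes automatically satisfies $\iota_v\beta = 0$ once we know that $\iota_v\beta = \int_{\mathbf{P}^{N^*}} \iota_v[W\cap\rho_\infty(H)]\,d\mu(H)$, i.e. that contraction commutes with the integral — this is a routine consequence of the weak convergence and the fact that $v$ is a fixed smooth vector field on $\mathbf{P}^N$. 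The real content is the inclusion $\subseteq$: given $v\in\mathfrak{g}_{W,\beta}$, I need to produce hyperplanes $H_1,\dots,H_d$ witnessing that $v$ lies in each $\mathfrak{g}_{[W\cap\rho_\infty(H_i)]}$.

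For that direction, first I would observe that $\mathfrak{g}_W\cap\bigcap_{i}\mathfrak{g}_{[W\cap\rho_\infty(H_i)]}$ is a descending chain of algebraic subspaces of $\mathfrak{sl}(N+1,\mathbf{C})$ as one adjoins more hyperplanes, so by the Noetherian/descending chain condition there exist finitely many hyperplanes $H_1,\dots,H_d$ such that adding any further hyperplane $H$ does not shrink the intersection further; call this stabilized subspace $\mathfrak{h}$. By construction $\mathfrak{g}_{W,\beta}\subseteq\mathfrak{h}$ trivially is false a priori — rather, what the stabilization gives is that for every hyperplane $H$, $\mathfrak{h}\subseteq\mathfrak{g}_{[W\cap\rho_\infty(H)]}$, hence $\iota_w[W\cap\rho_\infty(H)] = 0$ for all $w\in\mathfrak{h}$ and all $H$, and therefore by the integral formula $\iota_w\beta = 0$, giving $\mathfrak{h}\subseteq\mathfrak{g}_{W,\beta}$. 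So it remains to prove the reverse, $\mathfrak{g}_{W,\beta}\subseteq\mathfrak{h}$: for this I want to show that if $v$ is tangent to $W$ and $\iota_v\beta = 0$, then $\iota_v[W\cap\rho_\infty(H)] = 0$ for \emph{generic} $H$ — and generically the intersection $W\cap\rho_\infty(H)$ is the limit $\lim_{k}\rho_k(M\cap H)$ of honest smooth hyperplane sections (up to the measure-zero set $P_v$ of \enquote{bad} hyperplanes as in Lemma~\ref{lem:inttheta}), so $\iota_v[W\cap\rho_\infty(H)] = 0$ is equivalent to $v$ being tangent to the divisor $W\cap\rho_\infty(H)$ along its smooth locus, and this holds off a set of hyperplanes of positive codimension. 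Since any two generic hyperplane sections already cut $\mathfrak{g}_{W,\beta}$ down as far as it will go (by the stabilization), we conclude $\mathfrak{g}_{W,\beta}\subseteq\mathfrak{g}_{[W\cap\rho_\infty(H_i)]}$ for the stabilizing $H_i$, which are chosen generically.

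The step I expect to be the main obstacle is showing rigorously that $\iota_v\beta = 0$ forces $\iota_v[W\cap\rho_\infty(H)] = 0$ for generic $H$, i.e. that one cannot have cancellation in the integral $\int\iota_v[W\cap\rho_\infty(H)]\,d\mu(H) = 0$ without each integrand vanishing. The clean way around this is a positivity argument: since $v$ can be taken complex and we may test against a positive form, note that $\iota_v[W\cap\rho_\infty(H)]$, paired with appropriate test forms, varies semi-continuously and the \enquote{set of $H$ for which $v$ is tangent to $W\cap\rho_\infty(H)$} is Zariski-closed in $\mathbf{P}^{N^*}$; if it is not all of $\mathbf{P}^{N^*}$ then its complement has full measure, and one derives a contradiction with $\iota_v\beta = 0$ by choosing a smooth $(n-1,n-1)$-form $\Theta$ supported in a region where the local structure of $W\cap\rho_\infty(H)$ is controlled and where $\iota_v[W\cap\rho_\infty(H)]\wedge\Theta$ has a sign for $H$ in a positive-measure set. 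Concretely one reduces, as in the proof of Proposition~\ref{prop:thetaform}, to the generic hyperplanes having the explicit form $Z_{\max} = 0$ after acting by $\rho_\infty$, so that $W\cap\rho_\infty(H)$ is a fixed divisor independent of $H$ off a proper subvariety; then $\iota_v\beta = 0$ directly gives $\iota_v[W\cap\rho_\infty(H)] = 0$ for that one divisor, and tangency to finitely many such generic divisors is exactly the assertion of the lemma.
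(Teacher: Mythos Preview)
Your overall architecture matches the paper's: use the integral representation \eqref{eq:betaint}, show that $\iota_v\beta=0$ forces $\iota_v[W\cap\rho_\infty(H)]=0$ for almost every $H$, and then run a descending-chain argument on finite intersections of stabilizers. The descending-chain part is essentially what the paper does (though note a small ordering issue: you must pick the $H_i$ from the full-measure ``good'' set \emph{first}, and then cut down; taking the intersection over \emph{all} $H$ could be strictly smaller than $\mathfrak{g}_{W,\beta}$, since $\mathfrak{g}_{W,\beta}\subseteq\mathfrak{g}_{[W\cap\rho_\infty(H)]}$ is only claimed for a.e.\ $H$, not every $H$).

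The genuine gap is in the step you yourself flag as the main obstacle: ruling out cancellation in $\int \iota_v[W\cap\rho_\infty(H)]\,d\mu(H)=0$. Neither of your two concrete proposals works. The Zariski-closedness claim is problematic because $H\mapsto\rho_\infty(H)$ is not a projective-linear map; it is only a piecewise-linear limit stratified by the flag $W_1\supset W_2\supset\cdots$ from the preceding lemma, so the tangency locus is at best constructible and you would still need a separate argument on each stratum. The reduction to $\{Z_{\max}=0\}$ via Lemma~\ref{lem:inttheta} is simply the wrong context: that lemma concerns limits under a one-parameter subgroup $\lambda(t)$, whereas $\rho_\infty$ here arises from an arbitrary divergent sequence $\rho_k\in GL^G$, and there is no reason for generic $\rho_\infty(H)$ to be a single fixed hyperplane.

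What the paper does instead is produce an explicit nonnegative test form. Given $v$ tangent to $W$, set $\xi=\iota_{\bar v}\omega_{FS}^n$; then $\iota_v\xi$ is a nonnegative $(n-1,n-1)$-form with the pointwise property that $\iota_v\xi|_A=0$ for a complex $(n-1)$-plane $A\subset T_p\mathbf{P}^N$ if and only if $v(p)\in A$. Pairing $\iota_v\beta=0$ against $\xi$ gives
\[
\int_{\mathbf{P}^{N^*}}\Big(\int_{W\cap\rho_\infty(H)}\iota_v\xi\Big)\,d\mu(H)=0,
\]
and now the inner integral is nonnegative, so it vanishes for a.e.\ $H$; the pointwise characterization then yields $v$ tangent to $W\cap\rho_\infty(H)$ at every smooth point, i.e.\ $\iota_v[W\cap\rho_\infty(H)]=0$. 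This is the missing idea in your sketch: rather than seeking a sign on the $(0,1)$-current $\iota_v[W\cap\rho_\infty(H)]$ itself, one contracts again with $\bar v$ to land on a nonnegative $(n-1,n-1)$-form whose vanishing is equivalent to the tangency condition.
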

\begin{proof}
  Suppose that $v$ is a holomorphic vector field, which does not
  vanish along $W$, and let $\xi = \iota_{\bar v}
  \omega_{FS}^n$. This is an $(n,n-1)$-form such
  that $\iota_v\xi$ is a non-negative $(n-1,n-1)$-form. 
  If $A\subset T_p\mathbf{P}^N$ is a complex $(n-1)$-dimensional
  subspace, then $\iota_v\xi$ vanishes on $A$ only if $v\in A$.
  
  If $\iota_v\beta= 0$, then we have
  \[ \int_{H\in\mathbf{P}^*} \int_{W\cap \rho_\infty(H)} \iota_v\xi\,d\mu =
  0, \]
  and so for almost every $H$ we must have
  \[ \int_{W\cap \rho_\infty(H)} \iota_v\xi = 0. \]

  In particular, for almost every $H$ we must have $v\in A$ for all
  tangent planes $A = T_p(W\cap \rho_\infty(H))$ at all smooth points $p\in
  W\cap \rho_\infty(H)$. It follows that $\iota_v[W\cap \rho_\infty(H)]=0$, i.e.
  \[ \mathfrak{g}_{\beta} \subset \mathfrak{g}_{[W\cap \rho_\infty(H)]}. \]

  If we choose one such $H$, say $H_1$, it may happen that
  $\mathfrak{g}_{[W\cap \rho_\infty(H_1)]}$ is too large, i.e. there is a $w\in
  \mathfrak{g}_{[W\cap \rho_\infty(H_1)]}$ such that $\iota_w\beta\not=0$. But we
  have
  \[ \iota_w\beta = \int_{H\in \mathbf{P}^*}
  \iota_w[W\cap \rho_\infty(H)]\,d\mu, \]
  so we must have a positive measure set of $H$ for which
  $\iota_w[W\cap \rho_\infty(H)]\ne 0$. We can thus choose an $H_2$, so that we
  still have
  \[ \mathfrak{g}_{\beta} \subset \mathfrak{g}_{[W\cap \rho_\infty(H_1)]}, \]
  but $\mathfrak{g}_{[W\cap \rho_\infty(H_1)]}\cap \mathfrak{g}_{[W\cap \rho_\infty(H_2)]}$ is strictly
  smaller than $\mathfrak{g}_{[W\cap \rho_\infty(H_1)]}$. Repeating this a finite number
  of times, we obtain the required result. 
\end{proof}

It follows from this result that we can choose $H'_1,\ldots, H'_l$ for some $l$ such
that the Lie algebra of the stabilizer of the $(l+1)$-tuple $(W, W\cap \rho_\infty(H'_1),\ldots,
W\cap \rho_\infty(H'_l))$ in $GL^G$, for the action on a product of
Hilbert schemes, is equal to the $G$-invariant part of
 $\mathfrak{g}_{W, \beta}$,
and so according to Proposition~\ref{prop:reductive} it is
reductive. Using a result similar to Luna's slice theorem~\cite{Lun73} 
as in \cite[Proposition 1]{Don10} (as in \cite{CDS13_3} as well), we can therefore find a
$\mathbf{C}^*$-subgroup $\lambda \subset GL^G$ and an element $g\in
GL^G$ such that 
\[ \label{eq:Wlim} (W, W\cap \rho_\infty(H'_1),\ldots, W\cap \rho_\infty(H'_l)) = \lim_{t\to
  0} \lambda(t)g\cdot (M, M\cap H'_1,\ldots, M \cap H'_l). \]

In addition for a subset of $E \subset \mathbf{P}^{N^*}$ of measure zero,
if $H_1, \ldots, H_K \not\in E$, then the stabilizer of 
\[ (W,  W\cap\rho_\infty(H'_1),\ldots, W\cap \rho_\infty(H'_l), W\cap \rho_\infty(H_1),
\ldots  W\cap \rho_\infty(H_{K})) \]
 will still be the same as that of $(W,\beta)$, and so we can still find a corresponding
$\mathbf{C}^*$-subgroup $\lambda$ and $g\in GL^G$ which will satisfy \eqref{eq:Wlim}
as well as
\[ \label{eq:limlambda}
\begin{aligned}
W &= \lim_{t\to 0} \lambda(t)g\cdot M \\
W\cap \rho_\infty(H_i) &= \lim_{t\to 0} \lambda(t)g\cdot (M\cap H_i),
\text{ for }i=1,\ldots, K. 
\end{aligned} \] 
Note that
all of these $\lambda$ must fix $W$, but the $\lambda$ may vary as we change
the collection $(H_1,\ldots, H_K)$. 

Each of the $\mathbf{C}^*$-actions $\lambda$ is generated by a vector
field $w$ commuting with $v$, with Hamiltonian function $\theta_w$. We
will assume that $\theta_w$ is normalized so that
\[ \int_W \theta_w\,\omega_{FS}^n = 0. \]
Let us write $\Vert w\Vert =
\sup_W |\theta_w|$, although note that any two norms on the finite
dimensional space of such $w$ are equivalent. 

Because of \eqref{eq:betaint}, 
for any $\epsilon > 0$ we can choose $K$ large, and $H_1,\ldots, H_K
\not\in E$, such that no $N+1$ of the $H_i$ lie on a hyperplane in
$\mathbf{P}^{N^*}$, and for all vector fields $w$ as above we
have
\[ 
\int_W \theta_w\, \beta \wedge \omega_{FS}^{n-1} \leq \epsilon \Vert w\Vert + \frac{1}{K}
\sum_{j=1}^K \int_{W\cap \rho_\infty(H_j)} \theta_w\,\omega_{FS}^{n-1}.
\]
Applying this to the $w$ corresponding to the $\mathbf{C}^*$-action
$\lambda$ that we obtain for $(H_1,\ldots, H_K)$, we have
\[ \int_W \theta_w\, \beta \wedge \omega_{FS}^{n-1} \leq \epsilon \Vert
w\Vert + \frac{1}{K} \sum_{j=1}^K \lim_{t\to 0} \int_{\lambda(t)\cdot
  (M\cap H_j)} \theta_w\,\omega_{FS}^{n-1}. \]

Using Lemma~\ref{lem:inttheta}, and the fact that no $N+1$ of
the $H_i$ are in a hyperplane, we obtain, using also the normalization
of $\theta_w$, that
\[ \begin{aligned} 
\int_W \theta_w\, \beta \wedge \omega_{FS}^{n-1} &\leq \left( \epsilon +
  \frac{NC}{K} \right)\Vert w\Vert - \frac{K-N}{Kn} \int_{W} 
\max_W\theta_w\,\omega_{FS}^n,
\end{aligned} \]
for some fixed constant $C$. Choosing $K$ sufficiently large
(depending on $\epsilon$), we obtain a $\mathbf{C}^*$-action generated
by a vector field $w$, with Hamiltonian function $\theta_w$ as above, such
that
\[ \label{eq:b1}\int_W \theta_w\,\beta\wedge \omega_{FS}^{n-1} \leq 2\epsilon \Vert w\Vert
- \frac{1}{n}\int_W \max_W \theta_w\,\omega_{FS}^n. \]
Moreover this $\mathbf{C}^*$-action satisfies $W = \lim_{t\to 0}
\lambda(t)g\cdot M$, but not necessarily $\beta = \lim_{t\to 0}
\lambda(t)g\cdot \alpha$. Nevertheless the vector field $v$ satisfies
$\iota_v \beta = 0$ by construction.

Since $(W, (1-T)\beta)$ admits a twisted K\"ahler-Ricci soliton, we know that
\[\mathrm{Fut}_{(1-T)\beta, v}(W,w) = 0,\] 
and so 
\[ \label{eq:b2} 
\mathrm{Fut}_v(M, w) - \frac{1-T}{V}\left[ \int_W
  \theta_w e^{\theta_v}\,\omega_{FS}^n 
 + \int_W \theta_w n\beta \wedge
  \omega_{FS}^{n-1} \right] = 0.
\]
 At the same time
we are assuming that for some $s > T$,
the triple $(M,(1-s)\psi, v)$ is K-semistable, which, using
Proposition~\ref{prop:thetaform}, implies that we have 
\[\label{eq:b3} \mathrm{Fut}_v(M, w) - \frac{1-s}{V}\left[ \int_W
  \theta_w e^{\theta_v} \,\omega_{FS}^n - V\max_W\theta_w\right] \geq
0. \]

Together \eqref{eq:b2} and \eqref{eq:b3} imply
\[ \frac{s-T}{V} \int_W \theta_w e^{\theta_v} \,\omega_{FS}^n +
(1-s)\max_W \theta_w + \frac{1-T}{V}\int_W \theta_w n\beta\wedge
\omega_{FS}^{n-1} \geq 0. \]
Using also \eqref{eq:b1} we then get
\[ 0 \leq \frac{1-T}{V} 2n\epsilon \Vert w\Vert +
\frac{s-T}{V}\int_W(\theta_w - \max_W
\theta_w)e^{\theta_v}\,\omega_{FS}^n. \]
Since $s > T$ and $T < 1$, this is a contradiction if $\epsilon$ is
sufficiently small, unless $\Vert w\Vert = 0$. For this, note that
there is a uniform constant $c > 0$ such that
\[ \int_W (\max_W \theta_w - \theta_w)e^{\theta_v}\,\omega_{FS}^n \geq
c\Vert w\Vert \]
for all possible $w$ that we have, since these form a finite
dimensional space. 

It follows that we must have $\Vert w\Vert =0$, which means that
$\theta_w$ is constant on $W$. This implies that the corresponding
$\mathbf{C}^*$-action $\lambda$ is trivial, and so in fact by
\eqref{eq:limlambda} we have
\[ (W, W\cap \rho_\infty(H_1),\ldots, W\cap \rho_\infty(H_K)) = g\cdot (M,
M\cap H_1,\ldots, M\cap H_K) \]
for some $g\in SL^G$. If follows that 
\[ \label{eq:limrhok} \lim_{k\to\infty}\rho_k(H_i) = \rho_{\infty}(H_i) = g(H_i). \]
We can assume that $H_1,\ldots, H_{N+1}$ are in general position in
$\mathbf{P}^{N^*}$, and then each $\rho_k$ is determined by the
hyperplanes $\rho_k(H_i)$ for $i =1, \ldots, N+1$. In particular
\eqref{eq:limrhok} then implies that $\rho_k\to g$ in $SL^G$, which
 in turn implies that the sequence $\rho_k\in SL^G$
is bounded. If we  write
\[ \frac{1}{m} (F_k)_*\omega_{FS} = \omega_0 + \ddbar \phi_k \]
for the pullbacks of the Fubini-Study metrics to $M$ under our
embeddings $F_k$, we then have a uniform bound $|\phi_k| < C$. The
partial $C^0$-estimate implies that then we also have
\[ \omega_{t_k} = \omega_0 + \ddbar \phi_k' \]
with $|\phi'_k| < C'$ for a uniform constant, for the metrics
$\omega_{t_k}$ along the continuity path. It is then standard using
the estimates of Yau~\cite{Yau78} that we have uniform $C^{l,\alpha}$
bounds for $\omega_{t_k}$, and so we can obtain a solution of
Equation~\eqref{eq:KRScont} for $t=T$ (see also Zhu~\cite{Zhu00} for
the $C^2$-estimate in the soliton case). 

\subsection{The case $T=1$.} Suppose now that $T=1$, i.e. we can solve
Equation~\eqref{eq:KRScont} for all $t < 1$. This case is much more
similar to the work of Chen-Donaldson-Sun~\cite{CDS13_3}, since the
``current part'' of the equation disappears as $t\to 0$. The case of
K\"ahler-Ricci solitons was also studied by Jiang-Wang-Zhu~\cite{JWZ14}. We
briefly describe the argument for the sake of completeness. Just as in
the case $T < 1$, we have embeddings $F_t : M\to\mathbf{P}^N$ using
suitable orthonormal bases for $H^0(K_M^{-m})$ with respect to the metric
$\omega_t$, for some large $m$. The partial $C^0$-estimate is still
valid, in the K\"ahler-Einstein case by \cite{Sz13_1} based on the
method in \cite{CDS13_3}, and in the
soliton case due to Jiang-Wang-Zhu~\cite{JWZ14}. It follows that as
before, up to increasing $m$ and choosing a sequence $t_k \to 1$ we
have the algebraic convergence $F_{t_k}(M) \to W \in \mathbf{P}^N$ to
a normal $\mathbf{Q}$-Fano variety, homeomorphic to the
Gromov-Hausdorff limit $(Z,d_Z)$ of the sequence $(M,
\omega_{t_k})$. As before, we identify $(M, \alpha) = (F_0(M),
(F_0)_*\alpha)$ and so $(F_{t_k}(M), (F_{t_k})_*\alpha) = \rho_k\cdot
(M, \alpha)$ for $\rho_k \in SL^G$. The vector field $v$ on each
$F_{t_k}(M)$ is induced by a fixed vector field $v$ on $\mathbf{P}^N$,
which is also tangent to the limit $W$. We can also choose a further
subsequence of $t_k$ if necessary to have a weak limit
$(F_{t_k})_*\omega_{t_k} \to \omega_1$. We have the following, see
\cite[Corollary 1.4]{JWZ14}. A proof can also be given in the spirit
of the proof of Proposition~\ref{prop:limiteq}. 

\begin{prop} The pair $(W, v)$ admits a K\"ahler-Ricci soliton, and in
  fact this soliton is given by the current $\omega_1$. 
\end{prop}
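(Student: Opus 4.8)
The plan is to follow the template of Proposition~\ref{prop:limiteq}, specialized to the case $T=1$ where the twisting current drops out of the equation, and where (as in \cite{CDS13_3} and \cite{JWZ14}) we expect the limit to be an honest K\"ahler-Ricci soliton on $W$. First I would decompose the Gromov-Hausdorff limit $Z = \mathcal{R}\cup\mathcal{D}\cup\mathcal{S}_2$ exactly as in the proof of Proposition~\ref{prop:limiteq}, using Cheeger-Colding and Cheeger-Colding-Tian to conclude that $\mathcal{S}_2$ is closed of Hausdorff dimension at most $2n-4$, and hence that $F_1(\mathcal{S}_2)$ is closed of the same dimension bound in $W$. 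Setting $W' = W_0\setminus F_1(\mathcal{S}_2)$, it suffices by Remark~\ref{rem:smoothsoliton} (applied with $t=1$, so the $(1-t)\psi$ term vanishes) to produce on $W'$ a singular metric $e^{-\tau}$ on $K_{W'}$ with $\tau\in L^1_{loc}$ whose curvature satisfies $\omega_\tau = \omega_1$, where $\omega_1$ is the weak limit of the $(F_{t_k})_*\omega_{t_k}$.

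Next I would work locally near a point $p\in W'$, taking the two cases $p\in\mathcal{R}$ and $p\in\mathcal{D}$; as before only the latter requires real work. Near such a $p$ one has, by the arguments of \cite{CDS13_2} as adapted in \cite{Sz13_1}, biholomorphisms $H_k:\Omega_k\to B^{2n}$ with the rescaled metrics $\widetilde\omega_k = r^{-2}\omega_{t_k}$ satisfying the three properties listed in the proof of Proposition~\ref{prop:limiteq}: $\widetilde\omega_k = \ddbar\phi_k$ with $0\le\phi_k\le C$ and $|r^2 v_k(\phi_k)| < C$; $\omega_{Euc} < C\widetilde\omega_k$; and $C^{1,\alpha}$-closeness to the model $\eta_\gamma$ away from $\{u=0\}$. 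The metrics now satisfy the soliton equation $e^{r^2 v_k(\phi_k)}(\ddbar\phi_k)^n = e^{-r^2 t_k\phi_k}\omega_{Euc}^n$ (no $\psi_k$ term). As in the earlier proof, the bound $\omega_{Euc} < C\ddbar\phi_k$ together with the uniform bounds on $\phi_k$ and $r^2 v_k(\phi_k)$ forces $-r^2 v_k(\phi_k) - \log\frac{(\ddbar\phi_k)^n}{\omega_{Euc}^n} = r^2 t_k\phi_k$, a sequence of plurisubharmonic functions converging uniformly (since $r^2 t_k\phi_k\to\phi_\infty$ with $\omega_1 = \ddbar\phi_\infty$). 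The uniform bound on $v(\phi_k)$ then shows the limit $\omega_1^n$ gives a singular metric on $K_{B^{2n}}$ with locally integrable potential, and by the estimate \eqref{eq:aaa3} so does $e^{\theta_v}\omega_1^n$; this is the desired $e^{-\tau}$, and the convergence identifies $\omega_\tau = \omega_1$ on $W'$.

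Finally, Remark~\ref{rem:smoothsoliton} applied to $S = F_1(\mathcal{S}_2)$ (which has vanishing $(2n-2)$-dimensional Hausdorff measure) upgrades this to a genuine K\"ahler-Ricci soliton on all of $W$: $e^{-\tau}$ extends as a positively-curved singular metric across $S$ by Harvey-Polking and Demailly, and since $\omega_\tau = \omega_1$ globally one gets $e^{\theta_v}\omega_1^n = e^{-\phi_\infty}$ up to a constant, i.e. the soliton equation on $(W,v)$ with the soliton current $\omega_1$. The main obstacle, as in Proposition~\ref{prop:limiteq}, is the local model analysis near $\mathcal{D}$: obtaining the biholomorphisms $H_k$ and the three uniform estimates, in particular the Hamiltonian bound $|r^2 v_k(\phi_k)| < C$ which is what makes $\theta_v$ well-behaved in the limit; this is precisely the content imported from \cite{CDS13_2}, \cite{Sz13_1}, and for the soliton vector field from \cite{PSS12}, \cite{JWZ14}, and it is why one can also simply cite \cite[Corollary 1.4]{JWZ14} directly.
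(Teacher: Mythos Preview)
Your plan is precisely what the paper itself does: it gives no proof of this proposition, instead citing \cite[Corollary~1.4]{JWZ14} and remarking that a proof ``can also be given in the spirit of the proof of Proposition~\ref{prop:limiteq}.'' So your overall strategy is aligned with the paper, and you correctly note at the end that one may simply invoke \cite{JWZ14}.

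There is, however, one genuine slip in your adaptation of the argument of Proposition~\ref{prop:limiteq}. For each finite $k$ one still has $t_k<1$, so the local equation \emph{does} carry the twisting term:
\[ e^{r^2 v_k(\phi_k)}(\ddbar\phi_k)^n = e^{-r^2 t_k\phi_k - (1-t_k)\psi_k}\,\omega_{Euc}^n. \]
Your parenthetical ``(no $\psi_k$ term)'' is false at every stage of the sequence. What is true, and what must actually be argued, is that $(1-t_k)\psi_k$ disappears \emph{in the limit}, so that the limiting curvature identity reads $\omega_\tau=\omega_1$ with no residual twist. This is exactly where the case $T=1$ diverges from Proposition~\ref{prop:limiteq}: there $1-T>0$ and one extracts a genuine limiting current $\beta=\ddbar\psi_\infty$; here one needs $(1-t_k)\alpha_k\to 0$ as currents in the chart. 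From the equation one sees that $(1-t_k)\psi_k$ is bounded above, and away from $\{u=0\}$ also bounded below, so it subconverges in $L^1_{loc}$ to some psh $\chi$; the remaining task is to show $\ddbar\chi=0$. Your sketch does not address this, and in particular your claimed ``uniform'' convergence of the right-hand side is no longer available once the $(1-t_k)\psi_k$ term is reinstated. It is essentially this step (together with the possibility that $\mathcal{D}$ is empty when $T=1$, via the two-sided Bakry--\'Emery bound in the limit) that the paper defers to \cite{JWZ14}.
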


It follows from \cite[Corollary 3.6]{BW14} 
that the stabilizer of $W$ in $SL^G$ is reductive, and so
we can find a $\mathbf{C}^*$-subgroup $\lambda \in SL^G$ generated by
a vector field $w$ commuting with $v$, and an elements $g\in
SL^G$ such that
\[ W = \lim_{t\to 0} \lambda(t)g\cdot M. \]
This is a special degeneration for $M$, whose central fiber is
$W$. Since $W$ admits a K\"ahler-Ricci soliton, the corresponding
Futaki invariant $\mathrm{Fut}_v(W,w) = 0$. By assumption $(M,v)$ is
K-stable, and so $W$ must be biholomorphic to $M$. This means that
$\omega_1$ is a K\"ahler-Ricci soliton on $M$, which is what we wanted
to obtain.

\section{Some applications}\label{sec:examples}
In this section, we look at some
applications of Theorem \ref{thm:main} to existence of
K\"ahler-Einstein metrics on Fano manifolds with large symmetry
groups. 

\bigskip
\noindent {\bf Toric manifolds} \\
A compact Kahler manifold $M$ of complex
dimension $n$ is {\em toric} if the compact torus $T^n$ acts by
isometries on $M$ and the extension of the action to the complex torus
$(\mathbf{C}^*)^n$ acts holomorphically with a free, open, dense
orbit.  We can then recover the following theorem of Wang-Zhu
\cite{WZ04} as a consequence of Theorem \ref{thm:main}.  
\begin{thm}
There exists a K\"ahler-Ricci soliton, which is unique up to
holomorphic automorphisms, on every toric Fano manifold. As a
consequence, there exists a K\"ahler-Einstein metric on a toric Fano
manifold if and only if the Futaki invariant vanishes.   
\end{thm}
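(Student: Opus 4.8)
The plan is to apply Proposition~\ref{prop:main} with $M$ a toric Fano manifold, $G = T^n$ the compact torus, and $v$ the soliton vector field (to be determined, or rather, whose existence and choice is dictated by the vanishing of the modified Futaki invariant in a suitable direction). The crux is that for $G = T^n$, every $G$-equivariant special degeneration of $M$ is essentially trivial: if $\lambda \subset GL(N+1,\mathbf{C})^G$ is a one-parameter subgroup commuting with the full torus $G = T^n$ acting on $M \subset \mathbf{P}^N$, then $\lambda$ is itself contained in the complexified torus (up to conjugation, since the centralizer of a maximal torus is that maximal torus), and hence the central fiber $W = \lim_{t\to 0}\lambda(t)\cdot M$ is again $M$ itself — the degeneration is a product $X = M \times \mathbf{C}$. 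Thus the only special degenerations one must test are the product ones, for which the twisted Futaki invariant $\mathrm{Fut}_{(1-t)\alpha,v}(M,w)$ reduces to the classical modified Futaki invariant $\mathrm{Fut}_v(M,w)$ plus the manifestly positive term $\frac{1-t}{V}\int_M(\max_M\theta_w - \theta_w)e^{\theta_v}\,\omega_\phi^n$ from Proposition~\ref{prop:thetaform}.

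The key steps, in order, are as follows. First, recall the algebraic fact of Tian–Zhu~\cite{TZ00}: on any Fano manifold there is a unique holomorphic vector field $v$ in the Lie algebra of a fixed maximal torus such that $\mathrm{Fut}_v(M,w) = 0$ for all $w$ commuting with $v$; on a toric manifold one takes this $v$ inside $\mathrm{Lie}(T^n)_{\mathbf{C}}$. Second, observe that with this choice of $v$, the pair $(M,v)$ is K-stable with respect to $T^n$-equivariant special degenerations: every such degeneration is a product by the centralizer argument above, so its Futaki invariant equals $\mathrm{Fut}_v(M,w) + \frac{1}{V}\int_M(\max_M\theta_w - \theta_w)e^{\theta_v}\,\omega_\phi^n = 0 + (\text{nonnegative})$, and this vanishes only when $\theta_w$ is constant, i.e. the degeneration is trivial, so that $W$ is biholomorphic to $M$. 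Hence Definition~\ref{defn:Kstable} is satisfied. Third, apply the second assertion of Proposition~\ref{prop:main} to conclude that $(M,v)$ admits a Kähler-Ricci soliton. Fourth, uniqueness up to automorphism follows from Proposition~\ref{prop:uniqueness} (with $t=1$, $\psi$ omitted). Finally, for the last sentence: if the (ordinary) Futaki invariant of $M$ vanishes, then by uniqueness of the Tian–Zhu vector field one has $v = 0$, so the soliton is a genuine Kähler-Einstein metric; conversely the existence of a Kähler-Einstein metric forces the Futaki invariant to vanish by the classical obstruction.

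I expect the main obstacle to be cleanly justifying the \emph{centralizer reduction}: one must argue that a $\mathbf{C}^*$-action on $\mathbf{P}^N$ commuting with the image of $T^n$ — where the embedding uses $T^n$-invariant sections of $K_M^{-m}$ — can, after conjugating by an element of $GL(N+1,\mathbf{C})^{T^n}$, be assumed diagonal in coordinates adapted to the weight decomposition under $T^n$, so that its central fiber is a toric degeneration which, being still toric Fano and dominated by $M$, is $M$ itself. The cleanest route is to note that the weight spaces of $H^0(K_M^{-m})$ under $T^n$ are one-dimensional for the relevant $m$ (or to pass to such an $m$), forcing any commuting $\mathbf{C}^*$ to act diagonally; then the limit $W$ is a toric variety with the same moment polytope as $M$, hence equals $M$. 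Everything else is a direct invocation of the propositions already established.
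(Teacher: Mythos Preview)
Your approach is essentially the same as the paper's. The one substantive difference is in how you argue that every $T^n$-equivariant special degeneration is a product. Your initial line---``the centralizer of a maximal torus is that maximal torus''---is not correct as stated, since the image of $T^n$ in $GL(N+1,\mathbf{C})$ is far from a maximal torus there. Your weight-space fix is in the right direction, but it only tells you that $\lambda$ is diagonal; you then still need to know that the limit $W$ is toric \emph{for the same $n$-torus} (i.e.\ that $T^n$ still acts with a dense orbit on $W$) before ``same polytope $\Rightarrow$ same variety'' applies. The paper's argument sidesteps this: if $W$ were not in the $GL(N+1,\mathbf{C})$-orbit of $M$, its stabilizer would contain both $(\mathbf{C}^*)^n$ and the extra $\mathbf{C}^*$ coming from $\lambda$, giving a $(\mathbf{C}^*)^{n+1}$-action on $W$; this action is effective because $W$ is irreducible and not contained in a hyperplane, and an effective torus action on a normal $n$-dimensional variety has rank at most $n$---contradiction. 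This is the same dimension principle underlying your polytope argument, but packaged more directly.

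One minor slip in your K-stability check: you write the relevant Futaki invariant as $\mathrm{Fut}_v(M,w)$ plus the strictly positive term $\frac{1}{V}\int_M(\max_M\theta_w-\theta_w)e^{\theta_v}\,\omega_\phi^n$. That extra term carries a factor of $(1-t)$ and hence vanishes at $t=1$, which is the case relevant for K-stability of $(M,v)$. What you actually need (and what the paper checks) is simply $\mathrm{Fut}_v(M,w)=0$ for all toric $w$; since the central fiber of every such degeneration is already $M$, the ``equality only if $W\cong M$'' clause of Definition~\ref{defn:Kstable} is automatic. The paper obtains this vanishing by minimizing $\mathbf{a}\mapsto\int_P e^{\mathbf{a}\cdot\mathbf{x}}\,d\mathbf{x}$ over the moment polytope, which is exactly the Tian--Zhu argument you invoke.
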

\begin{proof}
Let $M$ be a toric manifold with $\mathrm{dim}_{\mathbf{C}}M= n$. We
wish to apply Theorem 1 with $G=T^n$ with a fixed
identification as a subgroup of $GL(N+1,\mathbf{C})$. The key
observation is that if $v$ is a toric vector field, then any
$(\mathbf{C}^*)^n$-equivariant special degeneration of $(M,v)$ is
necessarily trivial. Indeed, if $\lambda:\mathbf{C}^* \rightarrow
GL(N+1,\mathbf{C})^G$ is a test configuration and if $M_0 =
\lim_{t\rightarrow 0}\lambda(t)\cdot M$ is not in the
$GL(N+1,\mathbf{C})$-orbit of $M$, then the stabilizer of $M_0$ must
contain a $(\mathbf{C}^*)^{n+1}$.  On the other hand, since $M_0$ is
irreducible and not contained in any hyperplane, the action of this
stabilizer on $M_0$ must also be effective. This is a contradiction
since any torus acting on an $n$-dimensional normal variety cannot
have a dimension greater than $n$. The upshot is that $M_0$ must be
bi-holomorphic to $M$ and the test configuration is induced by a toric
vector field $w$ on $M$. To verify K-stability of $(M,v)$, it then
suffices to check that the modified Futaki invariant vanishes:
$\mathrm{Fut}_v(M, w)=0$, for all toric vector fields $w$ on $M$.

Next, recall that any toric manifold $M$ with an ample line bundle
corresponds to a unique (up to translations) polytope
$P\subset\mathbf{R}^n$ defined by a finite collection of affine linear
inequalities $l_j({\bf x})\geq 0$. This polytope is in fact the image
of the free $(\mathbf{C}^*)^n$ orbit in $M$ under the moment
map. Since $M$ is Fano, one can normalize the polytope so that $l_j(0)
= 1$ for all $j$. Any toric vector field can be written as $w =
\sum_{j=1}^{n}c_j z_j\frac{\partial}{\partial z_j}$ for some ${\bf c}
\in \mathbb{R}^n$ where $(z_1,\cdots,z_n)$ are the usual complex
coordinates on $(\mathbf{C}^*)^n$. In terms of the polytope data, for
a vector field $v = \sum_{j=1}^{n}a_j z_j\frac{\partial}{\partial
  z_j}$, equation \eqref{eq:modFut} then reduces
to $$\mathrm{Fut}_v(M,w) = {\bf c}\cdot\frac{\int_{P}{\bf x}~e^{{\bf
      a}\cdot{\bf x}}\,d{\bf x}}{V},$$ where $V = Vol(P)$ is the
volume of $M$. But then, as in Tian-Zhu~\cite{TZ00}, 
 by minimizing the functional $F({\bf a}) =
\int_{P}e^{{\bf a}\cdot{\bf x}}\,d{\bf x}$, one can find a vector
${\bf a}$ such that the integral on the right vanishes, and hence
$Fut_{v}(M,w)$ vanishes identically for the corresponding toric vector
field $v$.
\end{proof} 

If $M$ does not admit a K\"ahler-Einstein metric and $\alpha \in
c_1(M)$ is a K\"ahler form, then $$R(M) = \sup\{t~|~ \exists \omega\in
c_1(M) \text{ such that } Ric(\omega)=t\omega+(1-t)\alpha\},$$
provides a natural obstruction. It follows from the work of the second
author \cite{GSz09} that $R(M)$ is in fact independent of the choice of
$\alpha$. We can then recover the following result of Li \cite{Li11},
expressing $R(M)$ in terms of the corresponding polytope. 
\begin{thm}Let $M$ be toric, Fano, and $P$ be the canonical polytope
  as above with barycenter $P_C$. Let $Q$ be the the point of
  intersection of the ray $-sP_C$, $s\geq 0$ with $\partial P$. If $O$
  denotes the origin,  
$$R(M) = \frac{|QO|}{|QP_C|}$$  
\end{thm}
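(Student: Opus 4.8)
The plan is to use the twisted continuity method of Proposition~\ref{prop:main} (equivalently Theorem~\ref{thm:main} in its quantitative form), combined with the fact established in the previous theorem that for a toric Fano $M$ one only needs to check special degenerations of the form $X = M\times\mathbf{C}$, i.e.\ those induced by toric vector fields $w$ on $M$. Fix the $G$-invariant reference form $\alpha\in c_1(M)$ and, recalling the second author's result \cite{GSz09} that $R(M)$ is independent of $\alpha$, take $\alpha = \frac{1}{m}\omega_{FS}|_M$ as in the body of the paper. The number $R(M)$ is by definition the supremum of $t$ for which \eqref{eq:KRScont} with $v=0$ is solvable. By Proposition~\ref{prop:main} (applied with $v=0$) together with the toric triviality of special degenerations, $R(M)$ equals the supremum of those $s$ for which the twisted Futaki invariant $\mathrm{Fut}_{(1-s)\alpha}(M,w)\geq 0$ for all toric $w$; indeed if $(M,(1-s)\alpha)$ is K-semistable then one has a twisted KE metric for all $t<s$, and conversely a destabilizing $w$ obstructs solvability beyond that threshold. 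So the task reduces to computing, on the polytope side, the largest $s$ making the twisted Futaki invariant non-negative for every toric direction.

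Next I would translate $\mathrm{Fut}_{(1-s)\alpha}(M,w)$ into polytope data. From \eqref{eq:Futdef} with $v=0$ and the formula of Proposition~\ref{prop:thetaform}, for a toric $w$ with Hamiltonian $\theta_w$ corresponding to a linear functional $\ell_{\bf c}({\bf x}) = {\bf c}\cdot{\bf x}$ on the polytope $P$ (normalized so $\int_P {\bf x}\,d{\bf x}/V = P_C$ is the barycenter, after recentering), one gets
\[
\label{eq:toricfut}
V\cdot\mathrm{Fut}_{(1-s)\alpha}(M,w) = -\,{\bf c}\cdot P_C \;-\;(1-s)\Big(\max_P \ell_{\bf c} - {\bf c}\cdot P_C\Big),
\]
up to an overall positive normalization, where $\max_P\ell_{\bf c}$ is the maximum of the linear functional over $P$ (using that $W=M$ here and $\beta=\alpha$, so the $\theta_w$-integral against $\beta\wedge\omega^{n-1}$ is handled by Lemma~\ref{lem:inttheta}). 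Rearranging, $\mathrm{Fut}_{(1-s)\alpha}(M,w)\geq 0$ for all ${\bf c}$ is equivalent to
\[
s\,\max_P\ell_{\bf c} \;\leq\; \max_P\ell_{\bf c} - {\bf c}\cdot P_C \quad\text{for all }{\bf c},
\]
i.e.\ $s \leq \inf_{\bf c}\big(1 - ({\bf c}\cdot P_C)/\max_P\ell_{\bf c}\big)$, where the infimum is over ${\bf c}$ with $\max_P\ell_{\bf c}>0$.

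Finally I would identify this infimum with the ratio $|QO|/|QP_C|$. Recentering so that the relevant linear functionals are measured from the origin $O$, the worst direction ${\bf c}$ is the one pointing along $-P_C$: for ${\bf c}$ parallel to $-P_C$ one has ${\bf c}\cdot P_C = -|{\bf c}|\,|OP_C|$ and $\max_P\ell_{\bf c} = |{\bf c}|\,\mathrm{dist}$ from $O$ to the supporting hyperplane of $P$ in direction $-P_C$, which is exactly $|OQ|$ where $Q = \partial P\cap(-sP_C)$-ray. Then $1 - ({\bf c}\cdot P_C)/\max_P\ell_{\bf c} = 1 + |OP_C|/|OQ| = (|OQ|+|OP_C|)/|OQ| = |QP_C|/|OQ|$. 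To finish one must check this direction is genuinely the minimizer over all ${\bf c}$; this is the main point, and it follows from a convexity/support-function argument: for any ${\bf c}$, writing $h_P({\bf c}) = \max_P\ell_{\bf c}$ for the support function (with $O$ as origin), the quantity $1 - ({\bf c}\cdot P_C)/h_P({\bf c})$ is minimized when ${\bf c}$ is chosen so that ${\bf c}\cdot P_C$ is as negative as possible relative to $h_P({\bf c})$, and since $P_C\in \mathrm{int}\,P$ the extremal ratio is attained precisely in the ray direction through $O$ and $P_C$ hitting $\partial P$ at $Q$ — this is the standard duality between the barycenter ray and supporting hyperplanes. The main obstacle is thus this last optimization step: verifying rigorously that no other toric direction gives a smaller threshold, which requires care with the support function and the fact that $P_C$ lies strictly inside $P$ (so $h_P({\bf c}) + h_P(-{\bf c}) > 0$ and the ratio is well-behaved). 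The analytic half — that the threshold so computed is exactly $R(M)$ — is then immediate from Proposition~\ref{prop:main} and $\alpha$-independence.
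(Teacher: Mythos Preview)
Your overall strategy matches the paper's: reduce via Proposition~\ref{prop:main} and the triviality of toric special degenerations to a polytope optimization of the twisted Futaki invariant. However, the execution contains genuine errors that make the argument fail.

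First, your formula \eqref{eq:toricfut} has the wrong sign. With $v=0$ the paper's conventions (the displayed formula following Proposition~\ref{prop:thetaform}) give
\[
\mathrm{Fut}_{(1-s)\alpha}(M,w) \;=\; {\bf c}\cdot P_C \;+\; (1-s)\big(\max_P\ell_{\bf c} - {\bf c}\cdot P_C\big) \;=\; s\,{\bf c}\cdot P_C + (1-s)\max_P\ell_{\bf c},
\]
not its negative. Your rearrangement $s\max_P\ell_{\bf c}\le \max_P\ell_{\bf c}-{\bf c}\cdot P_C$ does not follow from either sign choice; the correct constraint (for ${\bf c}$ with ${\bf c}\cdot P_C<0$) is $s\le \max_P\ell_{\bf c}/(\max_P\ell_{\bf c}-{\bf c}\cdot P_C)$, which is the reciprocal of what you wrote. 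This is why your final computation produces $|QP_C|/|QO|>1$ rather than $|QO|/|QP_C|\le 1$.

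Second, and more seriously, the extremal direction is \emph{not} ${\bf c}$ parallel to $-P_C$. After normalizing $\max_P\ell_{\bf c}=1$, one is minimizing the linear functional ${\bf c}\mapsto{\bf c}\cdot P_C$ over the polar body $P^\circ$, so the optimum is attained at a \emph{vertex} of $P^\circ$, i.e.\ at ${\bf c}=-{\bf u}$ where ${\bf u}$ is the primitive normal of some facet of $P$. The paper identifies this facet as the one containing $Q$ and proves this is optimal (the ``Claim'') by a direct geometric comparison of affine functionals along the ray $-sP_C$; this step is the crux and cannot be replaced by the vague appeal to ``standard duality between the barycenter ray and supporting hyperplanes.'' Relatedly, your assertion that for unit ${\bf c}=-P_C/|P_C|$ one has $\max_P\ell_{\bf c}=|OQ|$ confuses the support function (distance from $O$ to the supporting hyperplane with outer normal ${\bf c}$) with the radial distance to $\partial P$ along the ray; these agree only when the facet through $Q$ happens to be orthogonal to $OP_C$. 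In the paper's argument, with ${\bf c}=-{\bf u}$ one has $\max_P\ell_{\bf c}=1$ and ${\bf c}\cdot P_C=1-l(P_C)$, and the ratio $|QO|/|QP_C|=1/l(P_C)$ drops out from the affine relation $l(0)=1$, $l(Q)=0$.
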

\begin{proof}
By the above discussion and Proposition \ref{prop:main} it is enough
to find the maximum $t$ such that $\mathrm{Fut}_{(1-t)\psi}(M,w)\geq
0$ for all toric holomorphic vector fields $w$ where $\alpha = \ddbar
\psi$. We once again write $w = \sum_{j=1}^{n}c_j
z_j\frac{\partial}{\partial z_j}$ for some ${\bf c} \in
\mathbb{R}^n$. Then the twisted modified Futaki invariant (equation
\eqref{eq:modFut}) takes the form  $$\mathrm{Fut}_{(1-t)\psi}(M,w) =
t{\bf c}\cdot P_c + (1-t)\max_{{\bf x}\in P}{\bf c}\cdot {\bf x}.$$ 
Now let the face of the polytope containing $Q$ be given by the
vanishing of the affine linear functional $l({\bf x}) := {\bf u\cdot
  x} + 1$. Note that since $l(0) = 1$, it follows from elementary
arguments that $|QO|/|QP_C| = 1/l(P_C)$. We also remark that $l(P_C)
\geq 1$.\\ 

\medskip

\noindent{\bf Claim:} For any ${\bf c} \in \mathbb{R}^n$, $$\frac{{\bf
    c}\cdot P_C}{\displaystyle\max_{{\bf x}\in P}{\bf c}\cdot {\bf x}}
\geq 1- l(P_C).$$

\noindent Assuming this, for $t\leq 1/l(P_C)$ and any holomorphic
toric vector field $w$, it is easily seen that $Fut_{(1-t)w}(M,w)\geq
0$, and hence $R(M) \geq 1/l(P_C)$. On the other hand, if $w$ is a
special holomorphic vector field corresponding to $-{\bf u} \in
\mathbb{R}^n$, then $\max_{{\bf x}\in P}(-{\bf u})\cdot {\bf x} = 1$,
and hence $$Fut_{(1-t)\psi}(M,w) = 1 - t\cdot l(P_C).$$ This is
negative when $t>1/l(P_C)$, which implies that $R(M) = 1/l(P_C)$,
completing the proof of the theorem. To prove the claim, we first
normalize ${\bf c}$ so that $\max_{{\bf x}\in P}{\bf c}\cdot {\bf x} =
1$. If we now let $\tilde l({\bf x}) = -{\bf c\cdot x} + 1$, then
$\tilde l({\bf x}) \geq 0$ for all ${\bf x} \in P$. Moreover, since
${\bf c}\cdot P_C = 1-\tilde l(P_C)$ it is enough to show that $l(P_C)
\geq \tilde l(P_C)$. Once again consider the ray $-sP_C$ with $s\geq
0$. If this does not intersect the hyperplane $\{\tilde l=0\}$, then
clearly ${\bf c}\cdot P_C \geq 0$, and hence $\tilde l(P_C) \leq 1
\leq l(P_C)$. On the other hand, suppose the ray does intersect the
hyperplane, at say a point $Q'$. Since the polytope $P$ lies entirely
on one side of the hyperplane, we have $|QP_C|<|Q'P_C|$. In fact,
since $\tilde l(0) = l(0) = 1$, $$\tilde l(P_C) =
\frac{|Q'P_C|}{|Q'O|} = \frac{|QQ'| + |QP_C|}{|QQ'| + |QO|}\leq
\frac{|QP_C|}{|QO|} = l(P_C),$$ and the claim is proved. 
\end{proof}

\bigskip
\noindent {\bf $\mathbf{T}$-varieties}\\ 
Relaxing the toric condition, we consider Fano manifolds $M$ with an
effective action of the torus $T^m$ for some $m < n = \dim M$. The
simplest case is that of a complexity-one action, where $m =
n-1$. K\"ahler-Einstein metrics on such manifolds, in particular Fano
3-folds with 2-torus actions, was studied by S\"uss~\cite{Suss13,
  Suss14}. In particular in \cite[Theorem 1.1]{Suss14} a list of 9 such
manifolds is given with vanishing Futaki invariant, 
for 5 of which it was not known whether they admit
a K\"ahler-Einstein metric or not. Using Theorem~\ref{thm:main} one only
needs to check $T$-equivariant special degenerations, and such
degenerations can be classified using combinatorial
data. \cite[Section 5]{Suss14} lists all such degenerations to
canonical toric Fano varieties, while the more general degenerations
to log-terminal toric Fanos are classified by
Ilten-S\"uss~\cite{IS15}. The conclusion is that all 9 Fano threefolds
with vanishing Futaki invariant in \cite[Theorem 1.1]{Suss14}
 admit a K\"ahler-Einstein metric.

\bigskip
\noindent {\bf Other manifolds with large symmetry group.}\\
We expect that Theorem~\ref{thm:main} can be used to show the
existence of K\"ahler-Einstein metrics on many other classes of Fano
manifolds with large symmetry group. One interesting class is that of 
reductive varieties, studied by
Alexeev-Brion~\cite{AB04, AB04_1}.
 Let $G$ be a connected compact group, $T \subset G$ a
maximal torus, and $W$ the corresponding Weyl group. Denote by
$\Lambda$ the character group of $T$, which is a lattice in the real
vector space $\Lambda_{\mathbf{R}}$. To every $W$-invariant maximal dimensional
convex lattice polytope $P\subset \Lambda_{\mathbf{R}}$ one can
associate a variety $V_P$, which is a $G^c\times G^c$-equivariant
compactification of $G^c$, the action being left and right multiplication. As
shown in~\cite{AB04_1} (see also Alexeev-Katzarkov~\cite{AK05}), 
the equivariant degenerations of $V_P$ correspond to convex, rational, $W$-invariant, piecewise
linear functions $f$ on $P$, in analogy to the toric case studied in
Alexeev~\cite{Ale02}, Donaldson~\cite{Don02}. If we have an
equivariant special degeneration, then in particular the central fiber
is irreducible, and this will only happen when $f$ is linear on $P
\cap \Lambda_{\mathbf{R}}^+$, where $\Lambda_{\mathbf{R}}^+\subset
\Lambda_{\mathbf{R}}$ is a positive Weyl chamber corresponding to a Borel subgroup of $G^c$,
containing $T^c$. It follows that there are only a finite number of
degenerations that need to be checked in order to apply
Theorem~\ref{thm:main}. 

In the case when $P\cap \Lambda_{\mathbf{R}}^+$ is a maximal set on
which $f$ is linear, then the central fiber of the corresponding
special degeneration is a horospherical variety. These are the
homogeneous toric bundles studied by Podesta-Spiro~\cite{PS10_1}, who
showed that all such Fano manifolds admit a K\"ahler-Ricci
soliton. This also follows from the above discussion together with our
main result, since the polytope $P$ can not be subdivided further, and
so a horospherical variety has no non-trivial
equivariant special degenerations, just as the toric manifolds
discussed above.

\section{The partial $C^0$-estimate for solitons} \label{sec:partialC0}
In this section we briefly outline the changes that have to be made to
the arguments in \cite{Sz13_1}, using also techniques in
Zhang~\cite{Z10}, Tian-Zhang~\cite{TZ12} and
Phong-Song-Sturm~\cite{PSS12}, 
to prove the partial $C^0$-estimate for
the family of metrics $\omega_t\in c_1(M)$ solving
\[ \label{eq:bb1} \mathrm{Ric}(\omega_t) - L_v \omega_t = t\omega_t + (1-t)\alpha, \]
where $t\in [0, T)$ with $T < 1$. The case when $T=1$ has been
established by Jiang-Wang-Zhu~\cite{JWZ14}. Here $v$ is a holomorphic
vector field, such that $\mathrm{Im}\,v$ generates a compact torus of
isometries of the metric $\alpha$. In particular $\omega_t$ will also
be invariant under this torus. To simplify notation, we will drop the
subscript $t$, and so in what follows, $\omega$ denotes a solution of
\eqref{eq:bb1} for some $t\in [0,T)$. 

Recall that we have the Hamiltonian function $\theta_v$ of $v$, with
respect to the metric $\omega$, defined by
\[ \iota_v \omega = \sqrt{-1}\dbar \theta_v, \]
with the normalization
\[ \int_M e^{\theta_v}\,\omega^n = \int_M \omega^n. \]
From Zhu~\cite{Zhu00}, and Wang-Zhu~\cite[Lemma 6.1]{WZ13_1} we know
that we have estimates
\[ \label{eq:thetaest}|\theta_v| + |\nabla \theta_v|_{\omega} + |\Delta_{\omega}
\theta_v| < C. \]

The Equation~\eqref{eq:bb1} implies that
\[ \label{eq:BElower}  \mathrm{Ric}(\omega) - L_v \omega \geq
0. \]
In addition as soon as $t$ is bounded away from 0, the volume
comparison and Myers type theorem in Wei-Wylie~\cite{WW09} implies
that the diameter of $(M,\omega)$ is bounded, and we have the
non-collapsing property 
\[ \label{eq:noncollapsed} \mathrm{Vol}(B(p,1),\omega) \geq c > 0. \] 
There are two basic approaches to studying metrics satisfying
this lower bound for the Bakry-\'Emery Ricci curvature, generalizing
the theory of Cheeger-Colding~\cite{CC97} in the case when $v=0$. 
One approach is to study the conformally related
metrics $\widetilde{g}_{j\bar k } = e^{-\frac{1}{n-1}\theta_v} g_{j\bar
  k}$, where $g_{j\bar k}$ is the metric with K\"ahler form $\omega$. 
 This approach, similar to that used in Zhang~\cite{Z10} and
Tian-Zhang\cite{TZ12} (who used the Ricci potential instead of $\theta_v$), 
effectively reduces the problem to studying
non-collapsed metrics with a lower Ricci curvature bound so that the
theory of Cheeger-Colding can be applied. Indeed, in real coordinates
the Ricci tensor of $\widetilde{g}$ satisfies
\[ \label{eq:rictilde}\widetilde{R}_{ij} = R_{ij} + \nabla_i\nabla_j \theta_v +
\frac{1}{2(n-1)} \nabla_i \theta_v \nabla_j\theta_v -
\frac{1}{2(n-1)}\big[ |\nabla \theta_v|^2_g - \Delta_g\theta_v\big]
g_{ij}, \]
and so \eqref{eq:BElower}, \eqref{eq:thetaest} together with the fact that $v$ is
holomorphic, and so $\nabla_i\nabla_j \theta_v$ is of type $(1,1)$,
imply that $\widetilde{g}$ has a Ricci lower bound. In addition it is
clear that $\widetilde{g}$ is uniformly equivalent to $g$. 
The other approach is to
build up the Cheeger-Colding theory using the bound \eqref{eq:BElower}
on the Bakry-Emery Ricci curvature. This approach is executed by
Wang-Zhu~\cite{WZ13_1}. We summarize the main conclusions from these works
that we need. 

If we have a sequence $(M, \omega_i)$, satisfying \eqref{eq:thetaest},
\eqref{eq:BElower} and \eqref{eq:noncollapsed}, then 
up to choosing a subsequence, the Riemannian manifolds $(M, g_i)$
converge in the Gromov-Hausdorff sense to a length space $(Z, d)$. At
each point $p\in Z$ there exists a tangent cone $C(Y)$ which is a
metric cone. We can stratify the space $Z$ as
\[ S_n \subset S_{n-1} \subset \ldots \subset S_1 = S \subset Z, \]
where $S_k$ consists of those points, where no tangent cone is of the
form $\mathbf{C}^{n-k+1}\times C(Y)$. 

The regular part of $Z$ is defined to be $\mathcal{R} = Z\setminus
S$, and at $p\in \mathcal{R}$ every tangent cone is $\mathbf{C}^n$.
We also write $\mathcal{D} = S \setminus S_2$. The
following is analogous to Anderson's regularity result~\cite{An90},
showing that we have good control of the metrics on the regular set if
we also have an upper bound of the Bakry-\'Emery Ricci curvature. 

\begin{prop}\label{prop:Anderson} Suppose that $B(p,1)$ is a unit ball in K\"ahler
  manifold $(M,\omega)$, together with a holomorphic vector field $v$ with
  Hamiltonian $\theta$, satisfying bounds of the form
\begin{enumerate}
  \item $\sup_M |\theta| + |\nabla\theta| + |\Delta\theta| < K$\\
  \item $0\leq \mathrm{Ric}(\omega) -
    L_v\omega \leq K\omega$. 
\end{enumerate}
There are constants $\delta, \kappa > 0$ depending on $K$ such that
if $d_{GH}(B(p,1), B^{2n}) < \delta$, then for each $q\in
B(p,\frac{1}{2})$, the ball $B(q,\kappa)$ is the domain of a
holomorphic coordinate system in which the components of $\omega$
satisfy
\[ \begin{gathered} \frac{1}{2}\delta_{jk} < \omega_{j\bar k} < 2\delta_{jk}, \\
  \Vert \omega_{j\bar k}\Vert_{L^{2,p}} < 2, \text{ for all }p. 
\end{gathered}\] 
\end{prop}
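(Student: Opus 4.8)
The plan is to reduce the statement to Anderson's $\varepsilon$-regularity theorem~\cite{An90} for Riemannian metrics with a two-sided Ricci bound, using the conformal change of metric already discussed above, and then to upgrade the resulting harmonic coordinates to holomorphic ones for $\omega$. First I would localize and rescale: fix $q\in B(p,\tfrac12)$ and a small radius $r>0$ depending only on $K$. Using the lower Bakry--\'Emery Ricci bound from hypothesis (2) together with $d_{GH}(B(p,1),B^{2n})<\delta$, and the (weighted) volume comparison and almost-rigidity of Cheeger--Colding~\cite{CC97} in the form adapted by Wei--Wylie~\cite{WW09} and Wang--Zhu~\cite{WZ13_1}, the rescaled ball $B(q,r)$ with metric $r^{-2}\omega$ is $\delta'$-Gromov--Hausdorff close to $B^{2n}$, with $\delta'\to 0$ as $\delta\to 0$; and by hypothesis (1) the function $\theta$ varies by at most $Kr$ on $B(q,r)$, so after subtracting $\theta(q)$ and rescaling, the conformal factor $e^{-\frac{1}{n-1}(\theta-\theta(q))}$ is within $e^{\pm CKr}$ of $1$.

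Next I would carry out the conformal change. On the rescaled ball set $\tilde g=e^{-\frac{1}{n-1}(\theta-\theta(q))}\,r^{-2}g$. By formula \eqref{eq:rictilde}, hypothesis (2) (in the rescaled picture the upper Bakry--\'Emery bound becomes $O(r^2K)$), and the rescaled bounds on $\nabla\theta,\Delta\theta$ from (1)---crucially using that $v$ is holomorphic, so $\nabla_i\nabla_j\theta$ is of type $(1,1)$ and the one-sided Bakry--\'Emery bound survives---the metric $\tilde g$ has a two-sided Ricci bound $|\mathrm{Ric}(\tilde g)|\le C(K)$, and $\tilde g$ is $\delta''$-Gromov--Hausdorff close to $B^{2n}$ with $\delta''\to 0$. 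Anderson's theorem then produces, on a ball $B_{\tilde g}(q,\kappa_0)$ of radius $\kappa_0=\kappa_0(K)$, a harmonic coordinate system in which $\tilde g$ is $C^{1,\alpha}$- and $L^{2,p}$-close to the flat metric. In these coordinates the Christoffel symbols of $\tilde g$ lie in $L^{1,p}$, so elliptic regularity applied to the bounded function $\Delta_{\tilde g}(\theta-\theta(q))$ gives $\theta-\theta(q)\in L^{2,p}$; hence the original Kähler metric $g=e^{\frac{1}{n-1}(\theta-\theta(q))}r^{2}\tilde g$ is also $L^{2,p}$-controlled in this chart, and after shrinking $\kappa_0$ we may assume $\tfrac12\delta_{jk}<g_{jk}<2\delta_{jk}$ in real coordinates. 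Since $J$ is $g$-parallel, it is likewise $L^{2,p}$-controlled.

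Finally---and this is the step I expect to be the main technical point---I would upgrade the harmonic chart to a holomorphic one. This is the classical construction used in this setting (see Cheeger--Colding--Tian~\cite{CCT02}, and its use in \cite{DS12,Sz13_1}): from the real harmonic coordinates one solves a $\dbar$-problem to produce holomorphic functions $z^1,\dots,z^n$ whose differentials at $q$ are close to $dx^{2j-1}+\sqrt{-1}\,dx^{2j}$, estimating the inhomogeneous term by the $L^{2,p}$-bound on $J$ and the solution by the $L^{2,p}$-elliptic estimates for the $\dbar$-operator with its low-regularity coefficients. By the inverse function theorem $(z^1,\dots,z^n)$ is then a biholomorphism from a ball $B(q,\kappa)$, $\kappa=\kappa(K)$, onto its image, and in these holomorphic coordinates $\omega=\sqrt{-1}\sum \omega_{j\bar k}\,dz^j\wedge d\bar z^k$ satisfies $\tfrac12\delta_{jk}<\omega_{j\bar k}<2\delta_{jk}$ and $\|\omega_{j\bar k}\|_{L^{2,p}}<2$ after one further shrinking of $\kappa$. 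The delicate part is that, because $\theta$ is controlled only through its Laplacian, the whole argument must run in the $L^{2,p}$ category: one must keep the non-Kähler auxiliary metric $\tilde g$ confined to the production of coordinates and of the Ricci/volume control, and perform the holomorphic upgrade for the genuinely Kähler metric $g$. An alternative that avoids the conformal change is the usual blow-up contradiction: a sequence of counterexamples, rescaled to unit harmonic radius at the offending points, Gromov--Hausdorff converges (by Wang--Zhu~\cite{WZ13_1}) to flat $\mathbf{R}^{2n}$ with all $\theta$-terms scaling away, contradicting the collapse of the harmonic radius; I would take the conformal route as the primary argument since it is the one indicated above.
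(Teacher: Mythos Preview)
Your proposal is correct and follows essentially the same route as the paper: conformally rescale by $e^{-\frac{1}{n-1}\theta}$ to obtain a metric $\tilde g$ with two-sided Ricci bounds (via \eqref{eq:rictilde} and the holomorphicity of $v$), apply Colding's volume convergence and Anderson's harmonic-radius theorem to get $L^{2,p}$ harmonic coordinates for $\tilde g$, bootstrap $\theta$ to $L^{2,p}$ using the Laplacian bound to transfer the estimates back to $g$, and finally use that $J$ is $g$-parallel to pass from harmonic to holomorphic coordinates. Your write-up is somewhat more explicit about the rescaling and the $\dbar$-construction of holomorphic coordinates, but the argument is the same.
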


\begin{proof}
We use the conformal scaling  $\widetilde{g} =
e^{-\frac{1}{n-1}\theta} g$, so that by \eqref{eq:rictilde} $\widetilde{g}$ satisfies
  two-sided Ricci curvature bounds. 
  Suppose that $d_{GH}((B(p,1),g),
  B^{2n}) < \delta$. The bound on $\nabla\theta$
  implies that if $q\in B(p,\frac{1}{2})$ and $r$ is sufficiently small, then
  \[d_{GH}( (B(q,r), \widetilde{g}), r\lambda B^{2n}) < 2\delta,\]
  for a suitable
  scaling factor $\lambda$ (depending on the value $\theta(q)$).

  If $\delta$ is sufficiently small, then
  Colding's volume convergence result~\cite{Col97} combined with Anderson's gap
  theorem implies that there is a harmonic coordinate system on the
  ball $B(q, r\theta\lambda, \widetilde{g})$ in which the metric
  $\widetilde{g}$ is controlled in $L^{2,p}$ for any $p$. The metrics
  $\widetilde{g}$ and $g$ are $C^1$-equivalent, so we also control the
  components of $g$ in $C^1$. The Laplacian bound on $\theta$ then
  implies that we have $L^{2,p}$ estimates on $\theta$ so in
  fact $g$ and $\widetilde{g}$ are equivalent in $L^{2,p}$. In
  particular in our harmonic coordinates (harmonic for
  $\widetilde{g}$) we control the coefficients of $g$ in
  $L^{2,p}$. Using that the complex structure is covariant constant,
  this allows us to find holomorphic coordinates on a possibly smaller
  ball, in which the coefficients of $g$ are controlled in $L^{2,p}$. 
\end{proof}

Following Chen-Donaldson-Sun, define
\[ I(\Omega) = \inf_{B(x,r)\subset\Omega} VR(x,r), \]
where $\Omega$ is any domain in a K\"ahler manifold, and $VR(x,r)$ is
the ratio of volumes of the ball $B(x,r)$ in $\Omega$ and the
Euclidean ball $rB^{2n}$. If the Ricci
curvature is non-negative, 
the Bishop-Gromov comparison theorem and Colding's volume
convergence implies that if $B$ is a unit ball in $\Omega$, then
$1-I(B)$ controls $d_{GH}(B, B^{2n})$,
and conversely $d_{GH}(B, B^{2n})$ controls $1-I(B)$. In our setting,
with the bound \eqref{eq:BElower}, a similar statement will only hold
once the metrics are scaled up by a sufficient amount. We have the
following. 

\begin{prop}\label{prop:Colding}
Suppose that $B$ is a unit ball in a K\"ahler manifold $(M,\omega)$
satisfying 
\[ Ric(\omega) - L_v \omega \geq 0, \]
as well as 
\[ \sup_B |\nabla \theta| + |\Delta\theta| \leq \delta, \]
where $\theta$ is a Hamiltonian of $X$. Then
\[ d_{GH}(B, B^{2n}) = \Psi(\delta, 1-I(B)), \]
and for any $\lambda < 1$, 
\[ 1- I(\lambda B) = \Psi(\delta, d_{GH}(B, B^{2n}), 1-\lambda), \]
where $\Psi(\epsilon_1, \ldots, \epsilon_k)$ denotes a function
converging to zero as $\epsilon_i\to 0$. 
We have suppressed the dependence of $\Psi$ on the
dimension $n$. 
\end{prop}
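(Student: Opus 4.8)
The plan is to deduce both claims from the corresponding statements for metrics with a genuine lower Ricci bound, applied to the conformally rescaled metric $\widetilde{g} = e^{-\frac{1}{n-1}\theta}g$, combined with the fact that the smallness of $|\nabla\theta|$ and $|\Delta\theta|$ forces $\widetilde{g}$ and $g$ to be $\Psi(\delta)$-close as length spaces on balls of bounded size. The key input is the formula \eqref{eq:rictilde}: together with the hypothesis $\mathrm{Ric}(\omega) - L_v\omega \geq 0$, the holomorphicity of $v$ (so $\nabla_i\nabla_j\theta$ is of type $(1,1)$, hence contributes nonnegatively after pairing appropriately), and the bound $\sup_B(|\nabla\theta| + |\Delta\theta|)\le\delta$, one gets $\widetilde{R}_{ij} \ge -\Psi(\delta)\,\widetilde g_{ij}$ on $B$. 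Thus $\widetilde g$ has an almost-nonnegative Ricci lower bound, and after a harmless rescaling of the metric (and of $\delta$) we are in the setting where Bishop--Gromov comparison and Colding's volume convergence theorem~\cite{Col97, CC97} apply with errors $\Psi(\delta)$.

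First I would record the metric comparison: since $|\nabla\theta|\le\delta$ on the unit ball $B$, the conformal factor $e^{-\frac{1}{2(n-1)}\theta}$ varies by at most $\Psi(\delta)$ across $B$ (after normalizing $\theta$ at the center), so the identity map $(B,g)\to(B,\widetilde g)$ is a $(1+\Psi(\delta))$-biLipschitz equivalence; in particular $d_{GH}((B,g),(B,\widetilde g)) = \Psi(\delta)$, and likewise the volume ratios $VR(x,r)$ computed in the two metrics differ by $\Psi(\delta)$, uniformly for $B(x,r)\subset B$. Hence $1 - I(B)$ and $d_{GH}(B,B^{2n})$ are each unchanged up to $\Psi(\delta)$ when passing between $g$ and $\widetilde g$. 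Second, for $\widetilde g$, which carries an almost-nonnegative Ricci bound, the classical dichotomy holds: by Colding's volume convergence, $d_{GH}((B,\widetilde g), B^{2n}) = \Psi(\delta, 1 - I_{\widetilde g}(B))$, and conversely $1 - I_{\widetilde g}(\lambda B) = \Psi(\delta, d_{GH}((B,\widetilde g),B^{2n}), 1-\lambda)$ for $\lambda<1$ — this last statement being the standard fact that a ball Gromov--Hausdorff close to Euclidean has all sub-balls (bounded away from the boundary) with almost-Euclidean volume ratio. Transferring back through the biLipschitz comparison absorbs the passage $g\leftrightarrow\widetilde g$ into the $\Psi$-error and yields exactly the two displayed estimates.

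The main obstacle is making the almost-nonnegative Ricci lower bound for $\widetilde g$ quantitative in the right way: one must check that the term $-\frac{1}{2(n-1)}[\,|\nabla\theta|^2 - \Delta\theta\,]\,g_{ij}$ in \eqref{eq:rictilde}, which a priori has an indefinite sign, is controlled by $\delta$ — this is immediate from the hypothesis $\sup_B(|\nabla\theta| + |\Delta\theta|)\le\delta$ — and, more importantly, that the positive-semidefinite contribution $\nabla_i\nabla_j\theta + \frac{1}{2(n-1)}\nabla_i\theta\nabla_j\theta$ does not spoil the lower bound. Since $v$ is holomorphic this Hessian term is of type $(1,1)$, and $L_v\omega = \sqrt{-1}\partial\dbar\theta_v$ is precisely the $(1,1)$-form it represents, so combining with $\mathrm{Ric}(\omega) - L_v\omega\ge 0$ gives $\mathrm{Ric}(\omega) + (\text{bounded-by-}\delta\text{ terms}) \ge 0$; the remaining $\nabla_i\theta\nabla_j\theta$ piece is nonnegative as a symmetric tensor, hence only helps. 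Once this is in place the rest is a bookkeeping exercise with the nested $\Psi$-functions, exactly as in the $v=0$ case treated in \cite{Sz13_1} and \cite{CDS13_2}.
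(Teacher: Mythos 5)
Your proposal follows the paper's proof essentially verbatim: pass to the conformal metric $\widetilde{g} = e^{-\frac{1}{n-1}\theta}g$, use \eqref{eq:rictilde} together with the Bakry--\'Emery bound, the holomorphicity of $v$ (so the real Hessian of $\theta$ is of type $(1,1)$) and $\sup_B(|\nabla\theta|+|\Delta\theta|)\le\delta$ to get $\mathrm{Ric}(\widetilde{g})\ge -C'\delta\,\widetilde{g}$, observe that after normalizing $\theta(0)=0$ the two metrics are $C^0$-close, and then invoke Colding's volume convergence and Bishop--Gromov for the almost-nonnegatively-Ricci-curved metric $\widetilde{g}$. The only quibble is your parenthetical claim that the Hessian contribution is ``positive-semidefinite'' (it need not be); what actually closes the argument is, as you say next, that this term is the type-$(1,1)$ tensor absorbed by the hypothesis $\mathrm{Ric}(\omega)-L_v\omega\ge 0$, which is exactly the paper's reasoning.
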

\begin{proof}
  We can assume that $\theta(0) =
  0$. 
  Use the conformal metric $\widetilde{g} = e^{-\frac{1}{n-1}\theta}
  g$. Then under our assumptions we have $Ric(\widetilde{g}) > -C'\delta
  \widetilde{g}$ and the metric $\widetilde{g}$ is very close in $C^0$
  to the metric $g$. We can
  then apply the volume convergence under lower Ricci curvature bounds
  to the metric $\widetilde{g}$. 
\end{proof}

We now return to our original setup, of a metric $\omega$ on $M$ satisfying
\[ \label{eq:omegai}
Ric(\omega) - L_v\omega = t\omega + (1-t)\alpha, 
\]
for some  $t\in [0,T)$, and $T < 1$. The vector field $v$ and
background metric $\alpha$ is fixed. As before we can assume
that the metrics are non-collapsed, and in addition the Hamiltonian
$\theta_v$ of $v$ satisfies
\[ \sup_M (|\nabla \theta_v|^2 + |\Delta \theta_v|) \leq K, \]
for some fixed constant $K$. The square is inserted for scaling
reasons. 
Note that for any point $p\in M$ we can choose the $\theta_v$ so that
$\theta_v(p)=0$. 
We will exploit the fact that $\alpha$ is a fixed metric. In
particular we can assume that $K$ is chosen such that on any ball of radius at
most $K^{-1}$ with respect to $\alpha$ we can find holomorphic
coordinates in which the coefficients of $\alpha$ are controlled in
$C^2$. 

To understand the tangent cones of the Gromov-Hausdorff limit of a
sequence of metrics satisfying these conditions, we need
to study very small balls in $(M, \omega)$, scaled up to unit
size.  Let $(B,\eta)$ be a small ball in $(M,\omega)$
 scaled to unit size, so that $\eta = \Lambda\omega$ for
some large $\Lambda$. Let $w = \Lambda^{-1}v$. Then
$\eta$ satisfies 
\[ Ric(\eta) - L_{w}\eta = \lambda \eta + (1-t)\alpha, \]
for some $\lambda\in (0,1]$ and $t\in (0, T)$.  In addition we can
choose the Hamiltonian $\theta_w$ for $w$ relative to
$\eta$
such that $\theta_w(0)=0$, and
\[ \sup_M (|\nabla \theta_w|_\eta^2 + |\Delta_\eta \theta_w|) \leq
\Lambda^{-1}K.\]

The following is the generalization of Proposition 8 in \cite{Sz13_1},
showing that on the regular set the Gromov-Hausdorff limit behaves as
if we had a two-sided Ricci curvature bound. Note
that as in Proposition~\ref{prop:Colding} we need an extra assumption
ensuring that we have scaled our metrics up by a sufficient amount. 
\begin{prop}
  There is a $\delta > 0$ depending on $K$ above, such that
  if $1-I(B) < \delta$, and the scaling factor $\Lambda > \delta^{-1}$ then 
 \[ \alpha < 4\eta\, \text{ in } \frac{1}{2}B. \]
\end{prop}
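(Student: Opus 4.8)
The plan is to argue by contradiction using a blow-up/compactness argument, exactly in the spirit of the proof of Proposition~8 in \cite{Sz13_1}, now keeping track of the soliton vector field via the conformal metric $\widetilde g = e^{-\frac{1}{n-1}\theta_w}g$. Suppose the statement fails. Then we have a sequence of pointed unit balls $(B_i, \eta_i)$ in K\"ahler manifolds $(M,\omega_i)$, where $\eta_i = \Lambda_i\omega_i$ with $\Lambda_i\to\infty$, satisfying $\mathrm{Ric}(\eta_i) - L_{w_i}\eta_i = \lambda_i\eta_i + (1-t_i)\alpha$ with $\lambda_i\in(0,1]$, $t_i\in(0,T)$, together with the Hamiltonian bounds $\theta_{w_i}(0)=0$ and $\sup(|\nabla\theta_{w_i}|_{\eta_i}^2 + |\Delta_{\eta_i}\theta_{w_i}|)\le \Lambda_i^{-1}K\to 0$, and $1-I(B_i)\to 0$, but for which $\alpha$ is \emph{not} bounded by $4\eta_i$ somewhere in $\tfrac12 B_i$; say there is a point $q_i\in\tfrac12 B_i$ and a tangent vector at which $\alpha(q_i) > 4\eta_i(q_i)$.

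First I would use Proposition~\ref{prop:Colding}: since $1-I(B_i)\to 0$ and $\sup_{B_i}(|\nabla\theta_{w_i}|+|\Delta\theta_{w_i}|)\to 0$, we get $d_{GH}(B_i, B^{2n})\to 0$, and moreover $1-I(\lambda B_i)\to 0$ for any fixed $\lambda<1$. Then the Anderson-type regularity of Proposition~\ref{prop:Anderson} (whose hypotheses are met once $\delta$ is small, using the two-sided bound $0\le \mathrm{Ric}(\eta_i)-L_{w_i}\eta_i\le K'\eta_i$ which follows from $\lambda_i\le 1$, $(1-t_i)\le 1$, and the bounded geometry of the fixed metric $\alpha$ on the tiny $\eta_i$-scale) gives holomorphic coordinate charts on a ball $B(q_i,\kappa)$ in which $\tfrac12\delta_{jk}<(\eta_i)_{j\bar k}<2\delta_{jk}$ with uniform $L^{2,p}$ bounds. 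In particular, in these charts $\eta_i$ subconverges in $C^{1,\beta}$ to a flat limit metric $\eta_\infty$ on $B^{2n}$.

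Next comes the key point: I would track the fixed metric $\alpha$ in these same charts. On the $\eta_i$-scale the ball $B(q_i,\kappa)$ has $\alpha$-diameter on the order of $\Lambda_i^{-1/2}\to 0$, so it is contained in a single $\alpha$-coordinate ball of radius $\le K^{-1}$ in which $\alpha$ has $C^2$-controlled coefficients. Comparing the two holomorphic coordinate systems (the limit one is obtained as a $C^{2,\beta}$-limit of transition maps, which are controlled because both coordinate systems are holomorphic and the metrics are $C^{1,\beta}$-bounded), I would conclude that in the $\eta_i$-charts the components of $\alpha$ are of size $O(\Lambda_i^{-1})\to 0$. Hence $\alpha < \eta_i$ on $B(q_i,\kappa)$ for $i$ large, in fact $\alpha\to 0$ there, contradicting $\alpha(q_i) > 4\eta_i(q_i)$ once we observe $(\eta_i)_{j\bar k}(q_i) > \tfrac12\delta_{jk}$ stays bounded below.

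The main obstacle I expect is not any single estimate but the bookkeeping of \emph{scales}: one must be careful that the coordinate charts produced by Proposition~\ref{prop:Anderson} have $\eta_i$-size bounded below (so that $\alpha$, which lives on the collapsing scale, genuinely becomes negligible) while simultaneously the conformal-metric trick is valid (which needs $\sup|\nabla\theta_{w_i}|$ small, hence the hypothesis $\Lambda_i>\delta^{-1}$). Concretely, the delicate step is passing the $C^2$-control of $\alpha$ from the fixed background coordinates to the holomorphic coordinates adapted to $\eta_i$; this requires a uniform bound on the transition biholomorphisms, which follows from the $L^{2,p}$-bounds on $\eta_i$ together with the fact that transition maps between holomorphic coordinate systems are holomorphic and hence smooth once they are $C^1$-bounded. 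Modulo this, the contradiction is immediate.
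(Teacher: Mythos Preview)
Your argument has a genuine circularity. You invoke Proposition~\ref{prop:Anderson} directly on the ball around $q_i$, but that proposition requires a two-sided bound $0\le \mathrm{Ric}(\eta_i)-L_{w_i}\eta_i\le K'\eta_i$. The equation gives $\mathrm{Ric}(\eta_i)-L_{w_i}\eta_i=\lambda_i\eta_i+(1-t_i)\alpha$, so the upper bound amounts to $\alpha\le C\eta_i$ --- which is precisely what you are trying to prove. The justification you offer (``bounded geometry of the fixed metric $\alpha$ on the tiny $\eta_i$-scale'') does not help: $\alpha$ has bounded geometry with respect to \emph{itself}, but there is no a priori comparison between $\alpha$ and $\eta_i$ (equivalently, between $\alpha$ and the solution metric $\omega_i$). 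The same circularity reappears in your claim that the $\eta_i$-ball $B(q_i,\kappa)$ has $\alpha$-diameter of order $\Lambda_i^{-1/2}$: this would follow from $\alpha\le C\omega_i$, which you do not have.

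The paper's proof circumvents exactly this issue by a point-picking rescaling, as in \cite{Sz13_1}. One sets $M=\sup_B d_x^2|\alpha(x)|_\eta$, and if $M>1$ one rescales around the maximizing point $q$ by the factor $4Md_q^{-2}$ to produce a new unit ball $\widetilde B$ with metric $\widetilde\eta$ on which $|\alpha|_{\widetilde\eta}\le 1$ holds \emph{by construction}, while at the center $|\alpha(q)|_{\widetilde\eta}=\tfrac14$. Now the upper Bakry--\'Emery bound is available for free, so Propositions~\ref{prop:Anderson} and \ref{prop:Colding} apply to give good holomorphic coordinates in which $\widetilde\eta$ is $C^{1,\alpha}$-close to flat. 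The soliton term is then shown to be negligible after rescaling (this uses $\Lambda>\delta^{-1}$), leaving $\mathrm{Ric}(\widetilde\eta)\ge -\epsilon\widetilde\eta+(1-t)\alpha$, and the argument of \cite{Sz13_1} produces a contradiction with $|\alpha(q)|_{\widetilde\eta}=\tfrac14$. The point-picking step is the missing idea in your approach.
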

\begin{proof}
  The method of proof is the same as in \cite{Sz13_1}. Suppose that 
\[ \sup_B d_x^2 |\alpha(x)|_\eta = M, \]
where $d_x$ is the distance of $x$ to the boundary of $B$ with respect to $\eta$, and suppose that
the supremum is achieved at $q\in B$. If $M > 1$ then we can consider
the ball
\[ B\left( q, \frac{1}{2} d_q M^{-1/2}\right), \]
scaled to unit size $\widetilde{B}$, with scaled metric
$\widetilde{\eta} = 4Md_q^{-2}\eta$. Note that $\widetilde{\eta}$ satisfies the
same estimates as $\eta$, but in addition
$|\alpha|_{\widetilde{\eta}} \leq 1$ on $\widetilde{B}$. If
$\delta$ is sufficiently small, then 
we can apply Propositions~\ref{prop:Anderson} and \ref{prop:Colding}
to find holomorphic coordinates $z_i$ on a small ball $\tau\widetilde{B}$, in which the
components of $\widetilde{\eta}$ are controlled in
$C^{1,\alpha}$. 

The metric $\widetilde{\eta}$ satisfies 
\[ \begin{aligned}
\mathrm{Ric}(\widetilde{\eta}) &= L_w\eta + \lambda\eta + (1-t)\alpha
\\
&\geq  (4Md_q^{-2})^{-1} L_{w}
\widetilde{\eta} 
+ (1-t)\alpha,
\end{aligned}\]
and for any $\epsilon > 0$ we can choose the scaling factor $\Lambda$
large enough, so that the Hamiltonian of $w$ satisfies
$|\nabla\theta_w|^2_\eta < \epsilon$, which implies
$|w|^2_{\widetilde{\eta}} < 4Md_q^{-2} \epsilon$. Since $w$ is a
holomorphic vector field, we obtain that in the coordinates $z_i$, on
the half ball $\frac{\tau}{2}B$, the
components of $w$, along with their derivatives are bounded by
$(4Md_q^{-2}\epsilon)^{1/2}$. It follows that on this ball we have
\[ |L_w \eta|_{\widetilde{\eta}} < C \epsilon^{1/2}
(4Md_q^{-2})^{-1/2}, \]
for some fixed constant $C$.  
In particular if $\delta$ is chosen
sufficiently small, then we  will have $L_w\eta <
\epsilon\widetilde{\eta}$ and so
\[ \mathrm{Ric}(\widetilde{\eta}) \geq
-\epsilon\widetilde{\eta} + (1-t)\alpha.\]
Using this, the rest of the proof is essentially identical to that in \cite{Sz13_1}. 
\end{proof}

Together with Proposition~\ref{prop:Anderson} it follows from this
that in the Gromov-Hausdorff limit of a sequence of metrics
$\omega$ satisfying \eqref{eq:omegai}, with $t < T < 1$, the
regular set is open and smooth, and the convergence of the metrics 
is $C^{1,\alpha}$ on the regular set. In addition the same holds for iterated
tangent cones.

What remains is to study tangent cones of the form
$\mathbf{C}_\gamma\times\mathbf{C}^{n-1}$, i.e. the points in the set
$\mathcal{D}$ in the Gromov-Hausdorff limit. The arguments in
\cite[Proposition 11, 12, 13]{Sz13_1} can be followed closely with a couple
  of remarks. First of all the results of Chen-Donaldson-Sun~\cite{CDS13_2} on good
  tangent cones can be applied. The main difference here is that a
  variant of the $L^2$-estimates in \cite[Proposition 2.1]{DS12} needs
  to be used, following \cite[Proposition 4.1]{PSS12}, with the
  Hamiltonian $\theta_v$ replacing the Ricci potential $u$. 
This implies that if a scaled up ball $(B, \eta)$
  as above is sufficiently close to the unit ball in
  $\mathbf{C}_\gamma \times \mathbf{C}^{n-1}$, then on a smaller ball
  we have holomorphic coordinates, in which the metric $\eta$
  satisfies the conditions $(1), (2), (3)$ in the proof of
  Proposition~\ref{prop:limiteq}. 

 An additional important fact used several times is that by
  Cheeger-Colding-Tian~\cite{CCT02}, no tangent cone of the form
  $\mathbf{C}_\gamma\times \mathbf{C}^{n-1}$ can form in the
  Gromov-Hausdorff limit of a sequence of K\"ahler metrics with
  bounded Ricci curvature. The analogous result with the bound on Ricci
  curvature replaced by a bound on $\mathrm{Ric}(\omega) - L_v \omega$ was shown
  by Tian-Zhang~\cite{TZ12}, and it also follows from the more recent
  work of
  Cheeger-Naber~\cite{CN14} in the general Riemannian case. With these
  observations the proof of the partial $C^0$-estimate for solutions
  of \eqref{eq:bb1} follows the argument in \cite{Sz13_1} closely.

\section{Reductivity of the automorphism group and vanishing of the Futaki invariant}\label{sec:reductive}
In this section we briefly outline the proofs of
Proposition~\ref{prop:uniqueness} and
Proposition~\ref{prop:reductive} following \cite{Ber13},\cite{CDS13_3} and
\cite{BW14}. As before, let $W$ be the normal $\mathbb{Q}$-Fano
variety obtained as the Gromov-Hausdorff limit along the continuity
method, and $v\in H^0(W,TW)$ such that $Im(v)$ generates the action of
a torus $T$ on $W$. We let $\mathcal{H}_v$ denote the space of
continuous $T$-invariant metrics $h_\phi=e^{-\phi}$ on $-K_W$ with
non-negative curvature. Then the twisted Ding functional is defined
as \[\mathcal{D}_{(1-t)\psi,v}(\phi) =
-tE_v(\phi)-\log{\Big(\int_{W}e^{-t\phi-(1-t)\psi}\Big)},\] where
$E_v$ is defined by its variation at $\phi$ in the direction 
$\dot\phi$ by 
\[ \frac{d}{ds} E_v(\phi) = \frac{1}{V} \int_W \dot{\phi}
e^{\theta_v}\,\omega_\phi^n, \]
as in Berman-Witt-Nystr\"om~\cite{BW14}. 
Next, we recall the definition of a geodesic in the path of K\"ahler
metrics. We let $\mathcal{R} = \{s 
\in \mathbb{C}~|~Re(s) \in [0,1]\}$. Recall that a path $\phi_s\in
\mathcal{H}_v$ is called a geodesic if $\Phi : W\times \mathcal{R}
\rightarrow \mathbb{R}$ defined by $\Phi(x,s) = \phi_{Re(s)}(x)$
satisfies$\sqrt{-1}\partial\bar\partial_{s,W}(\Phi) \geq 0$
and $$(\sqrt{-1}\partial\bar\partial_{s,W}\Phi)^{n+1} = 0,$$  where
the $\partial\bar\partial$ is taken in both $W$ and $\mathcal{R}$
directions. Then the following is proved in \cite{Ber13} 
\begin{lem}\label{geodesics}
For any $\phi_0,\phi_1 \in \mathcal{H}_v$, there exists a geodesic
$\phi_s\in \mathcal{H}_v$ connecting them such that $$||\phi_{s'} -
\phi_s||_{L^{\infty}(W)} < C|s'-s|$$ 
\end{lem}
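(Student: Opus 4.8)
The plan is to construct the geodesic as the upper envelope of a suitable family of subgeodesics, following the classical approach of Berndtsson and Chen, adapted to the (possibly singular) $\mathbf{Q}$-Fano variety $W$. First I would set up the boundary value problem: given $\phi_0,\phi_1\in\mathcal{H}_v$, define $\Phi$ on $W\times\mathcal{R}$ to be the supremum of all functions $\Psi(x,s)$ that are $\sqrt{-1}\partial\bar\partial_{s,W}$-plurisubharmonic on $W\times\mathcal{R}$, bounded above, and satisfy $\Psi(x,s)\le\phi_{\mathrm{Re}\,s}(x)$ for $\mathrm{Re}\,s\in\{0,1\}$ (using $\phi_0,\phi_1$ as the boundary data, which are independent of $\mathrm{Im}\,s$). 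The upper semicontinuous regularization $\Phi^*$ of this envelope is the candidate geodesic. Because $\phi_0,\phi_1$ are continuous metrics on $-K_W$ with non-negative curvature, they serve as competitors after subtracting a large multiple of a strictly psh function on a resolution pulled back to $W$ (or by working on the ambient $\mathbf{P}^N$), so the family is non-empty and $\Phi^*$ is a genuine candidate. One must then check that $\Phi^*$ is $S^1$-invariant in $\mathrm{Im}\,s$ (automatic by uniqueness of the envelope and rotation invariance of the data), so $\Phi^*(x,s)=\phi_{\mathrm{Re}\,s}(x)$ defines a path in the space of positively curved metrics, and $T$-invariant (again by uniqueness, since $\phi_0,\phi_1$ are $T$-invariant), so the path stays in $\mathcal{H}_v$.

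Next I would verify the two properties required of a geodesic. That $\sqrt{-1}\partial\bar\partial_{s,W}\Phi^*\ge 0$ is immediate since a sup of psh functions has psh regularization. That $(\sqrt{-1}\partial\bar\partial_{s,W}\Phi^*)^{n+1}=0$ follows from the standard balayage/maximality argument: the envelope of psh functions dominated by fixed boundary data is maximal in the interior, hence its Monge--Amp\`ere measure vanishes there. On a singular variety one runs this argument on the regular locus $W_0$, which carries $2n$-dimensional Hausdorff measure full in $W$, and uses that the relevant pluripotential-theoretic statements (Bedford--Taylor theory, the fact that the non-pluripolar Monge--Amp\`ere of the envelope vanishes) are insensitive to the pluripolar singular set; this is the kind of extension already invoked in Remark~\ref{rem:smoothsoliton} via \cite{BBEGZ11}.

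The Lipschitz-in-$s$ estimate $\|\phi_{s'}-\phi_s\|_{L^\infty(W)}\le C|s'-s|$ is the quantitative heart of the statement, and I expect it to be the main technical point. The upper bound comes from comparing $\Phi^*$ with the explicit competitor obtained by linearly interpolating the boundary data, $\Psi_{\mathrm{lin}}(x,s)=(1-\mathrm{Re}\,s)\phi_0(x)+\mathrm{Re}\,s\,\phi_1(x)-A\,\mathrm{Re}\,s(1-\mathrm{Re}\,s)$ for a large constant $A$ chosen so that $\Psi_{\mathrm{lin}}$ is $\sqrt{-1}\partial\bar\partial_{s,W}$-psh; this $A$ exists precisely because $\omega_{\phi_0},\omega_{\phi_1}\ge 0$ and $\phi_0-\phi_1$ is bounded, and it gives a Lipschitz-in-$\mathrm{Re}\,s$ lower barrier for $\Phi^*$. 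For the matching upper barrier one uses that $\Phi^*$ itself, being dominated by the psh extensions of the constant-in-$s$ functions $\max(\phi_0,\phi_1)+C|{\cdot}|$, is bounded, and then a comparison-principle argument (or the fact that the curve of metrics is a geodesic, hence has constant speed in the $L^\infty$-sense by Berndtsson's work) yields the two-sided Lipschitz bound. The only place where genuine care is needed beyond the smooth case is making sure all these barrier comparisons are valid across the singularities of $W$, which again is handled by restricting to $W_0$ and invoking the triviality of pluripolar sets; I would cite \cite{Ber13} and \cite{BBEGZ11} for the precise forms of these facts rather than reproving them here.
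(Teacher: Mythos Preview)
The paper does not give its own proof of this lemma; it simply attributes the statement to Berndtsson~\cite{Ber13} (with the extension to normal varieties implicit in the later discussion using \cite{BBEGZ11} and \cite{CDS13_3}). Your proposal is essentially a correct reconstruction of Berndtsson's envelope argument, so there is nothing to compare on the level of strategy.

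One small point worth tightening: your treatment of the upper barrier for the Lipschitz estimate is vaguer than the lower one. The standard clean argument is to observe that both $\phi_0 + C\,\mathrm{Re}\,s$ and $\phi_1 + C(1-\mathrm{Re}\,s)$, with $C=\|\phi_0-\phi_1\|_{L^\infty}$, are $\ddbar_{s,W}$-psh and dominate the boundary data, hence dominate the envelope; combined with your lower barriers this gives $|\Phi-\phi_0|\le C\,\mathrm{Re}\,s$ and $|\Phi-\phi_1|\le C(1-\mathrm{Re}\,s)$ at the boundary, and then convexity of $s\mapsto\phi_s(x)$ (which follows from $S^1$-invariance plus subharmonicity in $s$) propagates the Lipschitz bound to the interior. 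Your quadratic lower barrier also works, but invoking ``constant speed'' from Berndtsson as a black box for the upper bound is circular if you are trying to supply the proof rather than cite it.
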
The key point is that the Ding functional is convex along
these geodesics. It is proved in Berman-Witt-Nystr\"om \cite[Proposition
2.17]{BW14} that the functional $E_v(\phi)$ is affine along
geodesics and continuous up to the boundary. So the convexity of the
Ding functional is a consequence of the following result of Bendtsson
\cite{Ber13}. 
\begin{prop}\label{convexity}Let $\phi_s$ be a geodesic as above. Then
  the functional  
\[\mathcal{F}(s) = -\log\Big(\int_{W}e^{-t\phi_s-(1-t)\psi}\Big)\]
is convex. Moreover, if $\mathcal{F}(s)$ is affine, then there exists a holomorphic vector fields $w_s$ on $W$ with $i_{w_s}\ddbar\psi=0$,  and such that the flow $F_s$ satisfies
\begin{align*}
F_s^*(\ddbar\phi_s) &= \ddbar\phi_0 \\
\end{align*}
\end{prop}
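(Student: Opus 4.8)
The plan is to deduce both assertions from Berndtsson's positivity theorem for the relative anticanonical bundle~\cite{Ber13}, applied to the twisted weight $t\Phi+(1-t)\psi$, with the singularities of $W$ and the mild irregularity of geodesics absorbed into standard pluripotential theory as in \cite{BBEGZ11,BW14}. First I would set up the picture on $W\times\mathcal{R}$. Write $p\colon W\times\mathcal{R}\to W$ and $\pi\colon W\times\mathcal{R}\to\mathcal{R}$ for the two projections and $\Phi(x,s)=\phi_{\mathrm{Re}(s)}(x)$. The geodesic condition says precisely that $\Phi$ is a (bounded) local weight for a metric on $p^*(-K_W)$ with semipositive curvature current $\ddbar_{W\times\mathcal{R}}\Phi\ge 0$, while $e^{-\psi}$ pulls back to a metric on $p^*(-K_W)$ with $s$-independent curvature $\ddbar\psi\ge 0$. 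Hence $e^{-t\Phi-(1-t)\psi}$ is a metric on $p^*(-K_W)$ with semipositive curvature
\[ \Theta := t\,\ddbar_{W\times\mathcal{R}}\Phi + (1-t)\,\ddbar\psi \ \ge\ 0. \]
Since $K_{(W\times\mathcal{R})/\mathcal{R}}\otimes p^*(-K_W)$ is trivial, a holomorphic section $f$ of $\pi_*\big(K_{(W\times\mathcal{R})/\mathcal{R}}\otimes p^*(-K_W)\big)$ is just a holomorphic function on $\mathcal{R}$, with $L^2$-norm at $s$ equal to $|f(s)|^2\int_{W}e^{-t\phi_s-(1-t)\psi}$; this is finite by the $klt$ condition implicit in the definition of $\mathcal{H}_v$. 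As elsewhere in the paper the integrals are over the regular locus $W_0$, extended trivially, which is harmless because the complement has measure zero and $\Theta$ extends as a positive current.

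\textbf{Convexity.} Berndtsson's theorem on positivity of direct images~\cite{Ber13} (the relevant instance is the Brunn--Minkowski type inequality, which is also the source of \cite[Prop.~2.17]{BW14}) now gives that the induced $L^2$-metric $e^{-\mathcal{F}(s)}$ on this trivial line bundle has semipositive curvature, i.e.\ $\ddbar_s\mathcal{F}\ge 0$ on $\mathcal{R}$. Since $\mathcal{F}$ depends only on $\mathrm{Re}(s)$, this is exactly the convexity of $\mathcal{F}$, and conversely $\mathcal{F}$ being affine is equivalent to $\ddbar_s\mathcal{F}\equiv 0$.

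\textbf{The equality case.} Suppose $\mathcal{F}$ is affine. The rigidity part of Berndtsson's theorem then produces a holomorphic lift of $\partial/\partial s$: a holomorphic vector field $V=\partial/\partial s+w_s$ on $W_0\times\mathcal{R}$, with $w_s$ holomorphic on $W_0$ and depending holomorphically on $s$, lying in the kernel of the curvature, $\iota_V\Theta=0$. The one new point compared with the untwisted setting is that $\Theta$ is a sum of two semipositive $(1,1)$-currents: contracting $\iota_V\Theta=0$ once more with $\overline V$ gives $t\,\ddbar\Phi(V,\overline V)+(1-t)\,\ddbar\psi(V,\overline V)=0$ with both summands nonnegative, so each vanishes, whence by Cauchy--Schwarz $\iota_V\ddbar\Phi=0$ and $\iota_V\ddbar\psi=0$ separately. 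Since $\psi$ is pulled back from $W$ we have $\iota_{\partial_s}\ddbar\psi=0$, so $\iota_{w_s}\ddbar\psi=0$. Because $d\ddbar\Phi=0$, the first relation gives $L_V\ddbar\Phi=0$, so integrating the (nonautonomous) flow of $V$ from the central slice $\{s=0\}$ yields biholomorphisms $F_s\colon W_0\to W_0$ with $F_s^*(\ddbar\phi_s)=\ddbar\phi_0$. Finally, as $W$ is normal with singular set of codimension $\ge 2$, the bounded field $w_s$ and the maps $F_s$ extend to all of $W$, which is the assertion of the proposition.

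\textbf{Main obstacle.} The only genuinely nontrivial ingredient is the equality case of Berndtsson's positivity theorem — the fact that a flat direct-image metric forces a \emph{holomorphic} lift $V$ rather than merely a smooth one. This is the heart of \cite{Ber13}: one tracks the case of equality in the H\"ormander-type $L^2$-estimate underlying the positivity statement, where the degenerate direction of the curvature must be filled in holomorphically. Transplanting it to the twisted weight $t\Phi+(1-t)\psi$ is formal, since that weight still has semipositive curvature; and the singularities of $W$ together with the only-Lipschitz dependence of the geodesic on $s$ are handled by the pluripotential-theoretic machinery already in place in \cite{BBEGZ11,BW14}.
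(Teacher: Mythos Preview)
Your outline is the right one and matches the paper's strategy: apply Berndtsson's positivity of direct images to the twisted weight $t\Phi+(1-t)\psi$ to get convexity, and in the affine case produce a holomorphic lift $V=\partial_s+w_s$ in the kernel of the curvature, then split $\iota_V\Theta=0$ into its two nonnegative pieces via Cauchy--Schwarz to deduce $\iota_{w_s}\ddbar\psi=0$ and $F_s^*\ddbar\phi_s=\ddbar\phi_0$. The splitting argument in particular is exactly what the paper does.

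Where your write-up falls short is in the sentence ``the singularities of $W$ together with the only-Lipschitz dependence of the geodesic on $s$ are handled by the pluripotential-theoretic machinery already in place in \cite{BBEGZ11,BW14}.'' The paper says explicitly that the statement \emph{does not follow directly} from Berndtsson's smooth result or from the Chen--Donaldson--Sun extension (which treats only the case $\ddbar\psi=[D]$ for a divisor $D$), and then carries out the missing work. Concretely, the paper does not work on $W_0$ and extend by normality as you propose; it passes to a log-resolution $p\colon W'\to W$, uses the adjunction formula $-K_{W'}=-p^*K_W-E+\Delta$ to rewrite $\mathcal{F}(s)$ as an $L^2$-norm of a fixed $L$-valued $(n,0)$-form $u$ on a \emph{smooth} manifold, and then regularizes the singular weight $\tau_s'=p^*(t\phi_s+(1-t)\psi)+\sum a_j\log|s_j|^2$ by Demailly approximation. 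The substantive analytic input is a uniform $L^2$ estimate (the lemma quoted from \cite{Ber13,CDS13_3}) for the auxiliary $(n-1,0)$-forms $\nu_{s,\epsilon}$ solving $\nabla_s\nu_{s,\epsilon}=P_s(\dot\tau'_{s,\epsilon}\,u)$, together with a smallness estimate near $\Delta$; these are what allow one to pass to the limit in the Hessian formula and, in the affine case, to extract a weak limit $\nu_s$ from which $w_s$ is built via $\iota_{w_s'}u=\nu_s$.

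None of these steps is available from a black-box citation: Berndtsson's equality analysis is stated for smooth metrics on smooth manifolds, and the two sources of singularity here (the variety $W$ and the twist $\psi$, which is \emph{not} assumed integrable) are precisely what force the resolution-plus-regularization argument. So your proof is correct as a plan, but to make it a proof you need to supply the log-resolution, the regularization, and the uniform $L^2$ bounds, as the paper does.
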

This was proved on compact K\"ahler manifolds by Berndtsson
\cite{Ber13} and extended to normal varieties by Chen-Donaldson-Sun
\cite{CDS13_3} when $\ddbar\psi$ is the current of integration along a
divisor (see also~\cite{BBEGZ11}). Though the above statement does not seem to follow directly
from either of the works, the arguments can be easily adapted, and we
briefly provide an outline of the proof.  
\begin{proof} For ease of notation, we let $\tau_s = t\phi_s + (1-t)\psi$.
Let $p:W'\rightarrow W$ be a log-resolution. and $\omega'$ be a fixed
K\"ahler metric on $W'$. Since $W$ has only log terminal
singularities, one has the following adjunction formula  
\begin{equation}\label{adjunction}
-K_{W'} = -p^*K_W - E + \Delta,
\end{equation}
where $E$ and $\Delta$ are effective divisors, and $\Delta =
\sum{a_jE_j}$ with $a_j\in (0,1)$. Suppose first that $e^{-\tau_s}$ is
a smooth family of metrics on $-K_W$, inducing a smooth family of
pull-back metrics on $-p^*K_W$ with curvature $\omega_{\tau_s'} =
\ddbar\tau_s'$. We write $L=K_{W'}^{-1}\otimes
E$. Then from \eqref{adjunction} it is clear that $$\tau'_s =
p^*\tau_s + \sum a_j\log{|s_j|^2},$$ where $s_j$ is the defining
function of $E_j$, induces a family of singular metrics $e^{-\tau'_s}$
on $L$. Moreover, if $u$ is a holomorphic $L$-valued $(n,0)$ form with zero
divisor $E$ (which is unique up to multiplication by a constant) it
can be easily checked that up to scaling $u$ by a constant, 
\begin{align*}
\mathcal{F}(s) = -\log\int_{W'}u\wedge \bar u \,e^{-\tau'_s}.
\end{align*}

Let us pretend for the moment that the metrics $e^{-\tau_s'}$ are
smooth. Consider the equation
\[\label{L2 problem}\nabla_s \nu_s =
P_s\Big(\frac{d\tau'_s}{ds}u\Big),\]
where $\nabla_s = \partial
- \partial \tau'_s \wedge\cdot$ is the Chern connection of
$e^{-\tau'_s}$ and $P_s$ is the projection onto the orthogonal
complement of $L$-valued holomorphic $(n,0)$ forms. As argued in
\cite{Ber13}, it can be shown that there always exists a smooth
solution $\nu_s$ to \eqref{L2 problem} satisfying
$\bar\partial\nu_s\wedge \omega' = 0$. Next, the Hessian of
$\mathcal{F}$ is given by (\cite[Theorem 3.1]{Ber13},\cite[Lemma
14]{CDS13_3}) \[||u||^2_{\tau'_s}\ddbar\mathcal{F}(s) =
\int_{W'}\omega_s'\wedge \tilde u \wedge \bar{\tilde u}~e^{-\tau_s'}
~+~ ||\dbar \nu_s||^2_{\tau_s'} \sqrt{-1}ds\wedge d\bar s,\] where
$\tilde u = u - ds\wedge \nu_s$ and $\omega_s' =
\ddbar_{s,W'}(\tau'_s)$. This is in fact a special case of the general
positivity of direct image sheaves discovered by Bendtsson
\cite{Ber09}. For smooth geodesics, the convexity follows directly
from this formula.

In our case the metrics $\tau_s'$ are not smooth,
and hence we first need to use a regularization. First, if we let
$\eta = \omega' + \sqrt{-1}ds\wedge d\bar s$, then by the
approximation theorem of Demailly~\cite{Dem92} (see also
Blocki-Kolodziej~\cite{BK07}) there exists a decreasing sequence
of smooth metrics $\rho_{s,\epsilon}\searrow p^*\tau_s$ such that
$\ddbar_{s,W'}(\rho_{s,\epsilon}) \geq -C\eta$. By averaging we can
also suppose that $\rho_{s,\epsilon}$ are independent of $Re(s)$ and
$T$-invariant. To approximate $\tau_s'$ we then let
$\tau_{s,\epsilon}' = \rho_{s,\epsilon} + \log{h_\epsilon}$
where \[\log{h_\epsilon} = \sum{a_j (\log{(|s_j|^2_{h_j} + \epsilon)}
  - \log{h_j})}\]and $h_j$ is a metric on the line bundle generated by
$E_j$. Clearly $e^{-\tau_{s,\epsilon}'}$ are metrics on $L$ with
$\tau_{s,\epsilon}'\searrow \tau'_s$ and
$\ddbar_{s,W'}(\tau_{s,\epsilon}') >-C\eta$ for some $C>0$. Moreover,
for any neighborhood $U$ of $\Delta$ there exists a constant $C_U$
such that
$$\ddbar_{s,W'}(\tau_{s,\epsilon}') >-\epsilon C_U\eta,\text{ on } W'
\setminus U. $$
We then let $\nu_{s,\epsilon}$ be the solutions to \eqref{L2 problem}
corresponding to $\tau_{s,\epsilon}$. The key point now is the
following lemma of Berndtsson which guarantees uniform estimates for
these solutions independent of $s$ and $\epsilon$.  
\begin{lem}\cite[Lemmas 6.3,6.5]{Ber13},\cite[Lemmas 17,19]{CDS13_3}
\begin{itemize}
\item There exists a constant $C$ (independent of $s,\epsilon$) such
  that $$\Vert\nu_{s,\epsilon}\Vert_{L^2({\tau'_{s,\epsilon}})}\leq
  C\left\Vert\frac{d\tau'_{s,\epsilon}}{ds}u\right\Vert_{L^2({\tau'_{s,\epsilon}})}$$ 
\item For every $\delta$-neighborhood $U_\delta$ of $\Delta$, there
  exists a constant $c_\delta$ such that $c_\delta\rightarrow 0$ as
  $\delta \rightarrow 0$
  and $$\int_{U_\delta}|\nu_{s,\epsilon}|^2_{\tau'_{s,\epsilon}} \leq
  c_\delta\Big(\int_{W'}|\nu_{s,\epsilon}|^2_{\tau'_{s,\epsilon}} +
  |\bar\partial \nu_{s,\epsilon}|^2_{\tau'_{s,\epsilon}}\Big)$$ 
\end{itemize}
\end{lem}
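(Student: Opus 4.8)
The plan is to adapt the proofs of \cite[Lemmas 6.3 and 6.5]{Ber13}, together with their versions in \cite[Lemmas 17 and 19]{CDS13_3}; the geometric setup is formally the same, with the effective divisor $\Delta=\sum a_j E_j$, $a_j\in(0,1)$, playing the role of the boundary divisor, and the only real work is to check that all constants can be taken independent of the geodesic parameter $s$ and of the regularization parameter $\epsilon$.

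For the first estimate, recall that $\nu_{s,\epsilon}$ is, by construction, the solution of \eqref{L2 problem} --- with $\tau'_s$ replaced by $\tau'_{s,\epsilon}$ --- of minimal $L^2(\tau'_{s,\epsilon})$-norm among those satisfying $\dbar\nu_{s,\epsilon}\wedge\omega'=0$; equivalently it is the solution orthogonal to $\ker\nabla_s$, where $\nabla_s$ acts on $L$-valued $(n-1,0)$-forms over the compact manifold $W'$. Since the right hand side $P_s\big(\tfrac{d\tau'_{s,\epsilon}}{ds}u\big)$ lies in the orthogonal complement of the holomorphic sections, the required bound is the standard weighted $L^2$-estimate for the \emph{smooth} metric $e^{-\tau'_{s,\epsilon}}$ on $L$, as in \cite[Lemma 6.3]{Ber13}. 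Inspecting that argument, its constant depends only on the curvature lower bound $\ddbar_{s,W'}(\tau'_{s,\epsilon})\ge -C\eta$ --- which is admissible because one works modulo holomorphic sections --- and on the fixed K\"ahler geometry of $(W',\omega')$; both are uniform in $s$ and $\epsilon$ by the construction of the regularization. This gives the first bullet.

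For the concentration estimate, the statement is local around $\Delta$, so a partition of unity reduces it to a polydisc chart in which $\Delta$ is a union of coordinate hyperplanes $\{s_j=0\}$ and in which, uniformly in $s$ and $\epsilon$, the weight $e^{-\tau'_{s,\epsilon}}$ is comparable to the model weight $\prod_j|s_j|^{-2a_j}$ up to a bounded positive smooth factor --- here one uses that $\rho_{s,\epsilon}$ is uniformly bounded on such a chart and that $(|s_j|^2_{h_j}+\epsilon)^{-a_j}\le |s_j|_{h_j}^{-2a_j}$. For the model weight, since every $a_j\in(0,1)$, one decomposes $\nu_{s,\epsilon}|_{U_\delta}$ into a holomorphic part and a $\dbar$-correction on the thin neighbourhood $U_\delta$, and both the weighted size of $U_\delta$ and the norm of the $\dbar$-solution operator there carry a gain of a power $\delta^{2\min_j(1-a_j)}$; this is exactly the computation in the proof of \cite[Lemma 6.5]{Ber13}, and it yields the bound with $c_\delta\to 0$ as $\delta\to 0$. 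The uniform comparison of weights makes $c_\delta$ independent of $s$ and $\epsilon$.

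The step requiring the most care is precisely this uniformity, that is, arranging that both estimates hold with constants that are simultaneously independent of $s$ and of $\epsilon$. This is why the Demailly regularization $\rho_{s,\epsilon}$ is chosen to be $T$-invariant, independent of $\mathrm{Re}(s)$, decreasing to $p^*\tau_s$, and to satisfy $\ddbar_{s,W'}\rho_{s,\epsilon}\ge -C\eta$ with $C$ uniform --- and even $\ge -\epsilon C_U\eta$ away from $\Delta$ --- as in the construction above; these are the properties that propagate through both arguments of \cite{Ber13}, and that, combined with the two estimates of the present lemma, control the error terms in the convexity formula.
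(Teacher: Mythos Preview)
The paper does not give its own proof of this lemma: it is stated with attribution to \cite[Lemmas 6.3, 6.5]{Ber13} and \cite[Lemmas 17, 19]{CDS13_3}, and the only argument the paper supplies is the one-sentence remark that Berndtsson proved these estimates for metrics $e^{-\xi}$ with $\xi$ merely bounded above, and that $\tau'_{s,\epsilon}$ are easily seen to be uniformly bounded above. Your proposal is consistent with this: you sketch how the cited arguments run and correctly isolate the uniformity in $(s,\epsilon)$ as the only issue, tracing it back to the uniform curvature lower bound $\ddbar_{s,W'}\tau'_{s,\epsilon}\ge -C\eta$ and the uniform upper bound on $\tau'_{s,\epsilon}$ coming from the Demailly regularization.

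One small point worth tightening: in your treatment of the concentration estimate you write that the weight is ``comparable'' to the model $\prod_j|s_j|^{-2a_j}$, but in fact only the one-sided inequality $e^{-\tau'_{s,\epsilon}}\le C\prod_j|s_j|^{-2a_j}$ holds uniformly in $\epsilon$ (as you yourself note), since $(|s_j|^2+\epsilon)^{-a_j}$ is bounded above but not below by $|s_j|^{-2a_j}$ uniformly. This one-sided bound is exactly what is needed to control the integral over $U_\delta$ from above, and the global terms on the right-hand side are controlled using only the uniform upper bound on $\tau'_{s,\epsilon}$; this is precisely why the paper singles out the upper-boundedness of $\tau'_{s,\epsilon}$ as the relevant hypothesis.
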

Note that the norms of $\nu_{s,\epsilon}$ also
involve a K\"ahler metric on $W'$ which we take to be the fixed metric
$\omega'$. We also remark that this was proved by Berndtsson for
metrics $e^{-\xi}$ where $\xi$ is only upper bounded, and hence is
applicable in our situation since $\tau_{s,\epsilon}'$ are easily seen
to be upper bounded. Once we have this uniform $L^2$ estimate, the
rest of the argument in \cite{CDS13_3} can be followed almost
verbatim. That is, if we write for $\mathcal{F}_{\epsilon}(s)$ for the
functional corresponding to $\tau_{s,\epsilon}'$, then
$\mathcal{F}_\epsilon \searrow \mathcal{F}$. Moreover, using the
Hessian formula above one can show that for any $r\in(0,1)$ on
$[r,1-r]$ we have $$\frac{d^2\mathcal{F}_\epsilon}{ds^2} >
-c_{\epsilon} \rightarrow 0.$$ This shows that $\mathcal{F}$ is indeed
convex.

Suppose now that $\mathcal{F}$ is affine linear.
Observe that since $\tau_{s,\epsilon}'$ decrease to
$\tau_s'$ and $\tau_{s,\epsilon}'$ are uniformly Lipschitz in $s$,
$||\nu_{s,\epsilon}||_{L^2({\tau'_{s,\epsilon}})}$ are uniformly
bounded. Hence $\nu_{s,\epsilon}$ converges weakly in $L^2(\tau_{s}')$
to an $L$-valued $(n-1,0)$ form $\nu_s$  with $\bar\partial\nu_s =
0$. Integrating by parts, it can be shown that $\nu_s$ solves
\eqref{L2 problem} weakly on $W'\setminus\{\psi=-\infty\}$ or
equivalently, $\nabla_s\nu_s-u~d\tau_s'/ds$
is holomorphic on $\{\psi\neq\infty\}$, and it is in $L^2$. But since pluripolar
sets are removable for $L^2$ holomorphic forms,
$\nabla_s\nu_s-u~d\tau_s'/ds$ is also holomorphic globally. Using the formula
$\bar\partial\nabla_s \nu_s + \nabla_s\bar\partial \nu_s =
\omega_{\tau_s'}\wedge \nu_s$ it follows
that \[\label{limit1}\omega_{\tau_s'}\wedge\nu_s =
\sqrt{-1}\bar\partial\Big(\frac{d\tau_s'}{ds}\Big)\wedge u.\] 
%We now claim that $\nu_s$ solves \eqref{L2 problem} in the weak sense i.e. \[\int_{W'\times\mathcal{R}}ds\wedge d\bar s \wedge \nu_{s}\wedge \overline{\bar\partial\chi}~e^{-\tau_{s}'} = (-1)^n\int_{W'\times\mathcal{R}}ds\wedge d\bar s \wedge P_s(\frac{d\tau_s'}{ds}u)\wedge \overline{\chi}~e^{-\tau_{s}'},\] for any smooth $L$-valued $(n,0)$ form $\chi$. Extra care must be taken since $\psi$ is not bounded. The argument in \cite{Berdnt} is the following. Let $\chi$ be any smooth $L$-valued $(n,0)$ form supported on $\{\psi\neq-\infty\}$. Then $$\int_{W'\times\mathcal{R}}ds\wedge d\bar s \wedge \nu_{s,\epsilon}\wedge \overline{\bar\partial\chi}~e^{-\tau_{s,\epsilon}'} = $$
A family of holomorphic vector fields $w_s'$ can now be defined on
$W'\setminus E$ by $$\iota_{w_s'}u = \nu_s,$$ so that away from $E$ we
have $\iota_{w_s'}\omega_{\tau'_s} = -\sqrt{-1}~\dbar\dot{\tau'_s}$. Then
$w_s = p_*w_s'$ is a holomorphic vector field on $W_{0}$ which by
normality of $W$ extends to a global time-dependent holomorphic vector
field on $W$. Next, note that $p^{-1}$ is a biholomorphism when
restricted to $W_{0}$, and $\omega_{\tau_s} =
(p^{-1})^*\omega_{\tau_s'}$. It then follows that on $W_{o}$,
$\iota_{w_s}\omega_{\tau_s} = -\sqrt{-1}~\dbar\dot{\tau_s}$ and
hence, \[\label{Lie1}\mathcal{L}_{w_s}\omega_{\tau_s} =
-\frac{\partial}{\partial s}\omega_{\tau_s},\] as currents. Moreover,
it can be shown that $\partial w_s/\partial\bar s= 0$, and hence $w_s$
generates a holomorphic flow $F_s$ (see \cite[Lemma 5.2]{BBEGZ11}).
Also, note that $w_s'$ has uniform
$L^2$ bound (independent of $s$) away from $E$, and hence the flow
$F_s$ extends continuously to $s=0,1$ such that $F_0$ is the
identity. From \eqref{Lie1} it follows that on
$W_{0}$,
$$\frac{\partial}{\partial s}F_s^*\omega_{\tau_s} =
F_s^*\Big(\frac{\partial}{\partial s}\omega_{\tau_s} +
\mathcal{L}_{w_s}\omega_{\tau_s}\Big) = 0.$$
In particular
$F_s^*\omega_{\tau_s}=  \omega_{\tau_0}$ on $W_{0}$, and hence
globally on $W$ by unique extension of closed positive $(1,1)$
currents over sets of Hausdorff co-dimensions greater than two. Now,
if we define a holomorphic vector field $\mathcal{W}_s
= \partial/\partial s - w_s$ on $W\times \mathcal{R}$, following the
same line of argument as in \cite[Lemma 4.3]{Ber13} we can show
that $$\iota_{\mathcal{W}_s}\ddbar_{s,W}(\tau_s) = 0.$$ Again following
\cite{Ber13} $$0=\iota_{\overline{\mathcal{W}_s}}\iota_{\mathcal{W}_s}\ddbar_{s,W}{(\tau_s)}
=
t~\iota_{\overline{\mathcal{W}_s}}\iota_{\mathcal{W}_s}\ddbar_{s,W}{(\phi_s)}
+ (1-t)\iota_{\overline{w_s}}
\iota_{w_s}\ddbar{\psi}.$$ Since both the $(1,1)$
currents on the right are non-negative, each has to be zero. Again,
since $\ddbar\psi\geq 0$, by Cauchy's inequality for any $(1,0)$
vector field $\xi$, $\iota_{\overline{\xi}}\iota_{w_s}\ddbar\psi = 0$, and
hence $\iota_{w_s}\ddbar\psi = 0$. In particular,
$\mathcal{L}_{w_s}\ddbar\psi = 0$, and hence $F_s^*\ddbar\phi_s =
\phi_0$, which completes the proof of the proposition.  
\end{proof}

\subsection*{Proof of Proposition~\ref{prop:uniqueness}.} Let $e^{-\phi_0}$ and
$e^{-\phi_1}$ be two soliton metrics on $(W,(1-t)\psi,v)$ and
$\phi_s\in\mathcal{H}_v$ be a bounded geodesic connecting $\phi_0$ and
$\phi_1$. Since solitons are the stationary points of
$\mathcal{D}_{(1-t)\psi,v}$, the one sided derivatives at $s=0$ and
$s=1$ (which exist by convexity of the Ding functional) are zero. As a
consequence $\mathcal{D}_{(1-t)\psi,v}(\phi_s)$, and hence
$\mathcal{F}(s)$, is affine, and by Proposition \ref{convexity} there
exists a family of holomorphic vector fields $w_s$ with flow $F_s$
such that $F_s^*\omega_{\phi_s} = \omega_{\phi_0}$. Next, note that
$\phi_j$ for $j=0,1$ satisfies 
 \[\label{soliton1}Ric(\omega_{\phi_j}) = t\omega_{\phi_j} +
 (1-t)\ddbar\psi + \mathcal{L}_v\omega_{\phi_j}\] on $W_{0}$. So on
 the one hand, since $\phi_s$ are stationary points of
 $\mathcal{D}_{(1-t)\psi,v}$, $\omega_{\phi_s}$ also satisfies
 \eqref{soliton1}, while on the other hand $\omega_{\phi_s}$ satisfies
 \eqref{soliton1} with $v$ replaced by $(F_s)_*v$. Hence if we set
 $\xi_s = (F_s)_*v - v$, then
 $\mathcal{L}_{\xi_s}\omega_{\phi_s}=0$. This implies that if $h_s$ is
 the hamiltonian of $\xi_s$ with respect to $\omega_{\phi_s}$, then
 $\ddbar h_s = 0$ and consequently $v=(F_s)_*v$. To show the
 time-independence of the vector fields, arguing as in the proof of
 \cite[Proposition 4,5]{Ber13}, we can show that $$\iota_{(F^{-1}_s)_*w_s
   - w_0} \omega_{\phi_0} = 0.$$ Since $\phi_0$ is bounded, and hence
 in particular $e^{-\phi_0}$ is integrable, by
 Berndtsson~\cite[Proposition 8.2]{Ber13} the above equation forces
 $(F^{-1}_s)_*w_s = w_0$. This shows that the vector fields are
 independent of time, and in fact $F_s$ is just the flow generated by
 $w_0$. Finally since $\iota_{w_0}\omega_{\phi_0} =
 -\sqrt{-1}~\dbar\dot{\phi_0}$ and $\phi_0$ is real valued, $Im(w_0)$
 is also a Killing field for $\omega_{\phi_0}$. This completes the
 proof of the proposition with $w=w_0$. 

 \subsection*{Proof of Proposition~\ref{prop:reductive}.} As shown
 in \cite{CDS13_3}
reductivity follows from uniqueness, and we reproduce their
arguments. Suppose $\omega$ is the twisted K\"ahler-Ricci soliton on
the triple $(W,(1-t)\psi,v)$, and let $H$ be the connected group with
Lie algebra $\mathfrak{g}_{W, \psi, v}$ naturally identified as a subgroup of $SL(N+1,\mathbf{C})$. Let
$K\subset H$ be the subgroup of isometries of $\omega$ with the corresponding Lie sub-algebra of $\mathfrak{g}_{W, \psi, v}$ given by  $$\mathfrak{k}_{W, \psi, v} = \{w\in H^{0}(W,T^{1,0}W)~:~
\mathcal{L}_{Re(w)} \omega= 0,~\iota_w\omega_\psi = 0,~[w,v]=0\},$$
which can naturally be identified as a sub-algebra of $\mathfrak{su}(N+1,\mathbf{C})$. Moreover, since the trace form on $\mathfrak{su}(N+1,\mathbf{C})$ given by $B(x,y)=\mathrm{tr}(xy)$  is negative definite. it's restriction to $\mathfrak{k}_{W, \psi, v}$ is a non-degenerate bilinear form, and hence $\mathfrak{k}_{W, \psi, v}$ is a reductive Lie algebra. Next, if $K^c\subset SL(N+1,\mathbf{C})$  is the connected complexification of $K$, then clearly $K^c \subset H$. Conversely, for any $h\in
H$, it can be checked that $h^*\omega$ is also a twisted
K\"ahler-Ricci soliton for the triple $(W,(1-t)\psi,v)$, and hence by
Proposition 4 there exists an element $F \in K^c$ such that $h^*\omega
= F^*\omega$. But then $h\circ F^{-1}\in K$, and hence $H = K^c$. As a consequence $\mathfrak{g}_{W, \psi, v} = \mathfrak{k}_{W, \psi, v}\otimes_{\mathbf{R}}\mathbf{C}$, and is reductive. The same proof suitably modified shows that the centralizer $\mathfrak{g}_{W, \psi, v}^G$ is also reductive.
%To prove reductivity it suffices to show that $K^c = H$. For any $h\in
%H$, it can be checked that $h^*\omega$ is also a twisted
%K\"ahler-Ricci soliton for the triple $(W,(1-t)\psi,v)$, and hence by
%Proposition 4 there exists an element $F \in K^c$ such that $h^*\omega
%= F^*\omega$. Hence $h\circ F^{-1}\in K$. But then $h\in K^c$
%completing the proof of Proposition 6. In addition if $G$ is a
%group of isometries of $\omega$, then the subgroup $H^G$ commuting with $G$ is also
%reductive, being the complexification of $K^G$. 

\subsection*{Proof of Proposition~\ref{prop:Futvanish}.}
Suppose that $e^{-\phi}$ is a smooth metric on $K_{W}^{-1}$, and
$f_t\in \mathrm{Aut}(W)$ is a one-parameter group of
biholomorphisms, generated by $w\in \mathfrak{g}_{W, \psi,v}$.
 In particular since $f_t^*\omega_\psi = \omega_\psi$,
we must have $f_t^*(e^{-\psi}) = c_t e^{-\psi}$ for some constants
$c_t$. Similarly to \cite[Lemma 12]{CDS13_3}, we consider the quantity
\[ \begin{aligned}
  I(e^{-\phi}) &= \frac{1}{V}\int_W \log \frac{ \left(\int_W
    e^{-\phi}\right)^{-1} e^{-\phi}}{ \left(\int_W e^{-t\phi -
      (1-t)\psi}\right)^{-1} e^{-t\phi - (1-t)\psi}} \omega_\phi^n \\
&= \log \frac{\int_W e^{-t\phi - (1-t)\psi}}{\int_W e^{-\phi}}  -
\frac{1-t}{V}\int_W (\phi - \psi)\omega_\phi^n,
\end{aligned}\]
where we note that $\phi-\psi$ is a globally defined integrable
function. 
We have $I(f_t^*(e^{-\phi})) = I(e^{-\phi})$, and differentiating this
at $t=0$ we obtain (using $\dot\phi = \theta_w$), that
\[ \begin{aligned}
  &\frac{\int_W \theta_w e^{-\phi}}{\int_W e^{-\phi}} - \frac{ \int_W
    t\theta_w e^{-t\phi - (1-t)\psi}}{\int_W e^{-t\phi - (1-t)\psi}} -
  \frac{1-t}{V} \int_W \theta_w \omega_\phi^n \\
  &\quad - n\frac{1-t}{V}\int_W (\phi-\psi) \ddbar\theta_w \wedge
  \omega_\phi^{n-1} = 0.
\end{aligned}\]
Integrating by parts in the last integral, and using the definition
\eqref{eq:Futdef} of the twisted Futaki invariant, we obtain
\[ \label{eq:Futformula2}\mathrm{Fut}_{(1-t)\psi, v}(W,w) = \frac{t}{V}\int_W \theta_w
e^{\theta_v} \omega_\phi^n - t\frac{\int_W \theta_w e^{-t\phi -
    (1-t)\psi}}{ \int_W e^{-t\phi - (1-t)\psi}}. \]
 Note that this formula is not well defined if $e^{-(1-t)\psi}$ is not
integrable, but we only need it in that case, since by assumption $(W,
(1-t)\psi, v)$ admits a twisted K\"ahler-Ricci soliton. 

By the convexity of $\mathcal{D}_{(1-t)\psi, v}$, 
 twisted K\"ahler-Ricci solitons minimize the twisted Ding
functional, we know that $\mathcal{D}_{(1-t)\psi, v}$ is bounded
below. At the same time \eqref{eq:Futformula2} implies that
\[ \frac{d}{dt} \mathcal{D}_{(1-t)\psi, v}(f_t^*\phi) =
-\mathrm{Fut}_{(1-t)\psi, v}(W, w), \]
and as a result the twisted Futaki invariant must vanish.

\subsection*{Acknowledgements}
We would like to thank Valery Alexeev, Robert
Berman, Duong Phong,  Jian Song, Jacob Sturm, and Hendrik S\"uss for helpful
discussions. The second named author is supported by 
National Science Foundation grants DMS-1306298 and DMS-1350696.

\end{document}